\documentclass{amsart}


\usepackage{graphicx}
\usepackage{enumerate}
\usepackage{amsmath,amssymb}
\usepackage{mathrsfs}

\usepackage{color}


\newtheorem{theorem}{Theorem}[section]
\newtheorem{lemma}[theorem]{Lemma}
\newtheorem{proposition}[theorem]{Proposition}
\newtheorem{claim}[theorem]{Claim}
\newtheorem{corollary}[theorem]{Corollary}
\newtheorem{remark}[theorem]{Remark}
\newtheorem{definition}[theorem]{Definition}

\newtheorem*{caveat*}{Caveat}

\numberwithin{equation}{section}
\numberwithin{figure}{section}

\def\ind{\mathrm{index}}
\def\loc{\mathrm{loc}}
\def\Diff{\mathrm{Diff}}
\def\Cl{\mathrm{Cl}}


\begin{document}

\title[
Blenders,  cu-H\'enon-like families and  heterodimensional  bifurcations
]
{
Blenders in center unstable H\'enon-like families:  with an application to 
 heterodimensional bifurcations
}

\author{
Lorenzo J. D\'iaz
}
\address{
Depto.\ Matem\'atica, PUC-Rio, Marqu\^es de S.\ Vicente 225 22453-900 Rio de Janeiro RJ  Brazil
}
\email{
lodiaz@mat.puc-rio.br
}

\author{
Shin Kiriki
}
\address{
Dept. of Mathematics, Tokai University,
4-1-1 Kitakaname, Hiratuka Kanagawa, 259-1292, Japan
}
\email{
kiriki@tokai-u.jp
}

\author{
Katsutoshi Shinohara
}
\address{
FIRST, Aihara Innovative Mathematical Modelling Project, 
   Institute of Industrial Science,
   The University of Tokyo, 
   4-6-1 Komaba, Meguro-ku, Tokyo 153-8505, Japan}
\email{
herrsinon@07.alumni.u-tokyo.ac.jp
}

\subjclass[2000]{Primary: 37C20; 37C29; 37C70; Secondary: 37C25}
\keywords{blender, H\'enon-like family, heterodimensional cycle, 
renormalization.}

\date{\today}


\begin{abstract}
We give an explicit family of polynomial maps called center unstable H\'enon-like maps and prove that they exhibits blenders
for some parametervalues. 
Using this family, we also prove the occurrence of blenders near certain non-transverse heterodimensional cycles under high regularity assumptions. 
The proof involves a renormalization
scheme along heteroclinic orbits. We also investigate the connection 
between the blender and the original heterodimensional cycle.
\end{abstract}
\maketitle

\section{Introduction}\label{s.Introduction}
This paper has two main goals. The first one is to exhibit an explicit family of quadratic polynomial maps
in dimension three (center unstable H\'enon-like families) with blenders.
The second one is to prove the occurrence of blenders near certain non-transverse heterodimensional cycles and the 
connections between them under high regularity assumptions. 
The two previous results are related as follows: associated to the non-transverse heterodimensional cycles there are renormalization schemes converging to center unstable H\'enon-like families. 

We now briefly discuss the three main topics of this paper: 
blenders, non-transverse heterodimensional cycles, and renormalization. In what follows
we  assume that the dimension of the ambient space is three.

\subsection*{Blenders and H\'enon-like families}
{\emph{Blenders}} (see Definition~\ref{d.blender})
only appear in dimension greater than or equal to $3$
and are just a special type of hyperbolic sets $\Lambda$  of diffeomorphisms
$f$ which are maximal
invariant in a neighborhood $\Delta$, that is, 
$\Lambda=\cap_{n\in \mathbb{Z}} f^n(\Delta)$. 
We consider the case where the stable direction is one-dimensional.
Then the blender $\Lambda$ has a dominated
splitting with three hyperbolic directions (the stable, the center unstable, and the
strong unstable directions).
A key property of a blender is its internal dynamical
configuration that implies that every curve which crosses 
a distinctive open region of $\Delta$ and is almost tangent to the 
one-dimensional strong unstable direction 
intersects the local stable manifold of $\Lambda$.
This roughly means that  $\Lambda$ topologically behaves 
as a hyperbolic set with stable manifold 
of dimension two. 
Another relevant
property of a blender is its robustness: for diffeomorphisms $g$ near $f$
the continuation of the hyperbolic set $\Lambda$ for $g$ is also a blender.

A blender is an important ingredient
for obtaining robust non-hyperbolic dynamics: blenders 
play a similar role as the thick horseshoes
introduced by Newhouse \cite{Nthick,N79}.
They are used  to turn heterodimensional cycles and
homoclinic tangencies $C^1$-robust, see \cite{BD08,BD12}.
We remark that
recently some authors have introduced blenders 
whose  center unstable direction is indecomposable and has dimension $\geq 2$,
 see \cite{Enrique,Yuri}.

We study how  blenders occur. 
As far as we know, their construction involves series of 
perturbations which are genuinely $C^{1}$, see for instance  \cite{BD08,BD12}.
We present an explicit family of quadratic maps 
(which we call  a \emph{center unstable H\'{e}non-like family}) with blenders.
A novelty here is that the blenders are obtained without perturbations
and their occurrence only involves an appropriate selection of
parameters of the family.

\begin{theorem}\label{thm.main2}
Consider the center unstable H\'enon-like family
\begin{equation}
\label{e.henonfamily}
G_{\xi, \mu,\kappa, \eta}
({x}, {y}, {z})=
(\xi {x}+  {y},\ 
{\mu}+{y}^{2} + \kappa  {x}^{2}+ \eta {x}{y},\ 
{y}), \quad \xi>1.
\end{equation}
There is an open set $\mathcal{B}$
of parameters $(\xi,\mu,\kappa,\eta)$
  such that
any diffeomorphism $F$ $C^{r}$-close to
$G_{\xi, \mu,\kappa, \eta}$ with
$(\xi, {\mu},\kappa, \eta)\in \mathcal{B}$
has a blender.
\end{theorem}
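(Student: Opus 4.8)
\emph{Overview and Step~1 (the horseshoe).} The plan is to produce, inside a suitable box $\Delta\subset\mathbb R^{3}$, a hyperbolic basic set of $G=G_{\xi,\mu,\kappa,\eta}$ conjugate to the full shift on two symbols which satisfies the covering (superposition) property characterising blenders, all of whose defining conditions are $C^{1}$-open and hence persist for diffeomorphisms $F$ near $G$. I would work in the parameter region $\xi\in(1,2)$, $\mu$ sufficiently negative, and $\kappa,\eta$ sufficiently small (the terms $\kappa x^{2}+\eta xy$ are harmless for the blender and are kept only because they appear in the renormalisation scheme of the application). Since $G$ does not depend on $z$, on any invariant set one has $z_{n}=y_{n-1}$, so the dynamics is driven by the planar Hénon-like map $h(x,y)=(\xi x+y,\ \mu+y^{2}+\kappa x^{2}+\eta xy)$, a small perturbation of the skew product $(x,y)\mapsto(\xi x+y,\ \mu+y^{2})$ over the quadratic map $y\mapsto\mu+y^{2}$. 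For $\mu$ negative enough the latter has a hyperbolic Cantor set conjugate to the full $2$-shift, with branch intervals $I_{y}^{-},I_{y}^{+}$ lying near $-\sqrt{-\mu}$ and $+\sqrt{-\mu}$. Choosing $\Delta=I_{x}\times I_{y}\times I_{z}$ with $I_{y}$ the invariant interval of $y\mapsto\mu+y^{2}$, $I_{z}\supset I_{y}$ slightly larger, and $I_{x}=[-L,L]$ with $L$ of order $\sqrt{-\mu}/(\xi-1)$, one checks that $G$ admits two disjoint Markov boxes $R_{i}=I_{x}\times I_{y}^{i}\times I_{z}$ whose images $G(R_{i})$ cross $\Delta$ in both the $x$- and $y$-directions and are collapsed in the $z$-direction. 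Hence $\Lambda$, the maximal $G$-invariant set in $R_{1}\cup R_{2}$, is a hyperbolic basic set conjugate to the full $2$-shift, on which $G$ is a homeomorphism, carrying a dominated splitting $E^{s}\oplus E^{cu}\oplus E^{uu}$ with $\dim E^{s}=1$: $E^{s}$ is close to the $z$-axis (multiplier $\approx 0$ for $G$), $E^{cu}$ is close to the $x$-axis (multiplier $\xi$), and $E^{uu}$ is the strong unstable direction (multiplier $\approx 2\sqrt{-\mu}$); domination follows from $0<\xi<2\sqrt{-\mu}$.

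\emph{Step~2 (the covering property).} Along $\Lambda$ the $x$-coordinate of a point is determined by its forward itinerary through the convergent series $x=-\sum_{k\ge0}y_{k}/\xi^{k+1}$ with $y_{k}\in I_{y}^{s_{k}}$, and the two inverse branches of $G|_{\Lambda}$ act on the $x$-coordinate as the contractions $\psi_{\pm}(t)=(t-y_{\pm})/\xi$ with $y_{\pm}\approx\pm\sqrt{-\mu}$ and ratio $1/\xi>1/2$. The crucial elementary computation is that, with the above $L$, the pair $\{\psi_{-},\psi_{+}\}$ satisfies the blender covering property with genuine overlap: $\psi_{-}(I_{x})\cup\psi_{+}(I_{x})\supsetneq I_{x}$ with overlapping images, the strictness of the overlap coming from $\xi<2$ and the strict covering near the two ends from the definite width of the branches $I_{y}^{\pm}$ (this is why $\mu$ is kept in an open range, not at a single value). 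Consequently the projection of $\Lambda$ — hence of $W^{s}_{\loc}(\Lambda)$ — to the $x$-axis contains a fixed interval $J$; and since $G$ collapses the $z$-direction, each leaf of $W^{s}_{\loc}(\Lambda)$ is ``full'' in $z$, so $W^{s}_{\loc}(\Lambda)$ projects onto the full rectangle $J\times I_{z}$ in the $(x,z)$-plane.

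\emph{Step~3 (blender and robustness).} Taking the superposition region to be $\mathcal D=J'\times I_{y}\times I_{z}$ for a slightly smaller $J'$, the blender property then follows by the standard mechanism (as in \cite{BD08,BD12}): a $C^{1}$ curve tangent to a narrow cone around $E^{uu}$ and crossing $\mathcal D$ has $(x,z)$-projection essentially a point of $J'\times I_{z}\subset J\times I_{z}$; the covering property allows one to follow it by inverse branches while staying in $\mathcal D$, and the interval-filling of $\operatorname{proj}_{x}W^{s}_{\loc}(\Lambda)$ together with the fullness of the leaves in $z$ forces an intersection with $W^{s}_{\loc}(\Lambda)$. Every ingredient used — the Markov crossings of $\Delta$ by $G(R_{i})$, the domination inequalities, and the covering-with-overlap of $\{\psi_{-},\psi_{+}\}$ — is $C^{1}$-open, so for any diffeomorphism $F$ that is $C^{r}$-close to $G$ the continuation $\Lambda_{F}$ is a hyperbolic basic set with the same dominated splitting and covering property, i.e.\ a blender; one then lets $\mathcal B$ be the nonempty open set of parameters on which all these conditions hold. (That $G$ itself is not a diffeomorphism — $DG$ is singular along the $z$-axis — is immaterial, since the statement only concerns nearby diffeomorphisms; alternatively one may replace $G$ throughout by the genuine diffeomorphism $(x,y,z)\mapsto(\xi x+y,\ \mu+y^{2}+\kappa x^{2}+\eta xy,\ y+\delta z)$ with $\delta$ small.)

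\emph{Main obstacle.} The hard part will be Step~2: obtaining \emph{strict} overlap (not merely covering) of the two center-unstable contractions, since the naive estimate with $\kappa=\eta=0$ and the minimal $I_{x}$ yields only non-robust exact covering; one has to exploit the definite width of the branches $I_{y}^{\pm}$ and control the bounded, itinerary-dependent corrections that make the true center-unstable dynamics a perturbation, rather than a literal instance, of the affine iterated function system $\{\psi_{-},\psi_{+}\}$. A second, more routine subtlety is that $W^{s}_{\loc}(\Lambda)$ is a lamination rather than a foliation, so the ``no curve slips through a gap'' part of Step~3 must be extracted from the interval-filling of the cu-projection together with the fullness of the leaves in the $z$-direction, exactly as indicated above.
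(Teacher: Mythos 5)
Your proposal takes a genuinely different route from the paper. The paper proves Theorem~\ref{thm.main2} by passing to the conjugate form $G_{\mu,\kappa,\xi}(x,y,z)=(y,\mu+y^{2}+\kappa z^{2},\xi z+y)$ and verifying by direct numerical estimates (with $-10<\mu<-9$, $0<\kappa<10^{-4}$, $1.18<\xi<1.19$, $|x|,|y|\le 4$, $-40\le z\le 0$) the Bonatti--D\'iaz sufficient conditions (H1)--(H5) for a blender: Markov crossings of $\Delta$ in Lemma~\ref{l.H1H2}, cone-field invariance and expansion in Lemma~\ref{l.H3}, and the superposition via a case analysis over the two sub-boxes $A',B'$ in Lemma~\ref{l.H4H5}. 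You instead build a horseshoe as a skew-product over $y\mapsto\mu+y^{2}$ and argue superposition through an iterated-function-system picture for the inverse branches acting on the $x$-coordinate. That IFS-covering-with-overlap viewpoint is a legitimate and well-known approach to blenders (close in spirit to the symbolic blenders of \cite{Yuri}), and has the virtue of being more conceptual than the paper's explicit verification.

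There are, however, two genuine gaps. First, Step~2 is not just ``the hard part,'' it is actually unresolved as written, and your stated reason for the overlap is off-target: the positive width of the branches $I_y^{\pm}$ is precisely what makes the cu-dynamics an itinerary-dependent perturbation of the model IFS $\{\psi_{-},\psi_{+}\}$ (a difficulty to control), not the source of the overlap. The overlap comes from choosing $L$ strictly inside the admissible range $\bigl[\sqrt{-\mu},\,\sqrt{-\mu}/(\xi-1)\bigr]$ (nonempty exactly because $1<\xi<2$); having done so, you then have to show that the itinerary-dependent corrections to $y_{\pm}$, as well as the $\kappa x^{2}+\eta xy$ terms, do not destroy this strict overlap uniformly over the horseshoe. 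This is exactly the kind of quantitative control the paper achieves by its concrete parameter window and the case analysis in Lemma~\ref{l.H4H5}, and it cannot be waved away. Second, your Step~3 conflates two different statements. That $\operatorname{proj}_{x}W^{s}_{\loc}(\Lambda)$ fills an interval is a property of $\Lambda$ alone; the blender property is the stronger assertion that \emph{every} almost-vertical curve through the superposition region meets $W^{s}_{\loc}(\Lambda)$, and this does not follow from interval-filling plus ``fullness in $z$'' (a Cantor lamination can project onto an interval and still miss a transversal curve after perturbation). The correct deduction is the iterative one encoded in (H4)--(H5): a vertical strip to the right of $W^{s}_{0}$ has an $F$-image that again contains a vertical strip to the right of $W^{s}_{0}$ in $A$ or $B$, and since the $\mathcal{C}^{uu}$-width grows under iteration, the nested intersection produces a point of $W^{s}_{\loc}(\Lambda)$ on the original curve (this is \cite[Lemma 1.8]{BD96}). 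You mention ``following it by inverse branches'' but then fall back on the projection statement; spelling out the iterative argument, rather than the projection heuristic, is what is actually needed to close Step~3.
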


One important feature of the family (\ref{e.henonfamily}) is the existence 
of the term $y$ in the $x$-component. 
This term gives us a superposition property (see Definition~\ref{d.blender}) of the hyperbolic set
which enables us to obtain the blender, 
compare the family treated in \cite{GL10}.
\smallskip

%
%
%
%
%

\subsection*{Coexistence of critical and noncritical dynamics}
Homoclinic tangencies and H\'enon-like dynamics are in the core
of the so-called \emph{critical dynamics}, while heterodimensional cycles 
are genuine bifurcations of  \emph{non-critical dynamics}, for more details see
Preface of \cite{BDV}. 
There are many cases
where the effects of the critical and the non-critical dynamics overlap: the system 
has a critical region and a non-critical one and there are transitions
between these two regions.
We study
such kind of configuration in dimension three: diffeomorphisms having
two saddles of 
different {\emph{indices}} (dimension of
the unstable direction)
whose invariant manifolds are cyclically related
by a
heterodimensional cycle with non-transverse heteroclinic intersections.
This configuration (depicted in Figure~\ref{fig1})
is called a
 {\emph{non-transverse  heterodimensional cycle}}: 
the two invariant manifolds of dimension one meet quasi-transversally and 
the two-dimensional ones have a tangential intersection.
The dynamics close to the saddles
and the quasi-transverse heteroclinic intersection provide the non-critical part
of the dynamics, while the critical one is given by the heteroclinic tangency. 

\begin{figure}[hbt]
\centering
\scalebox{0.5}{\includegraphics[clip]{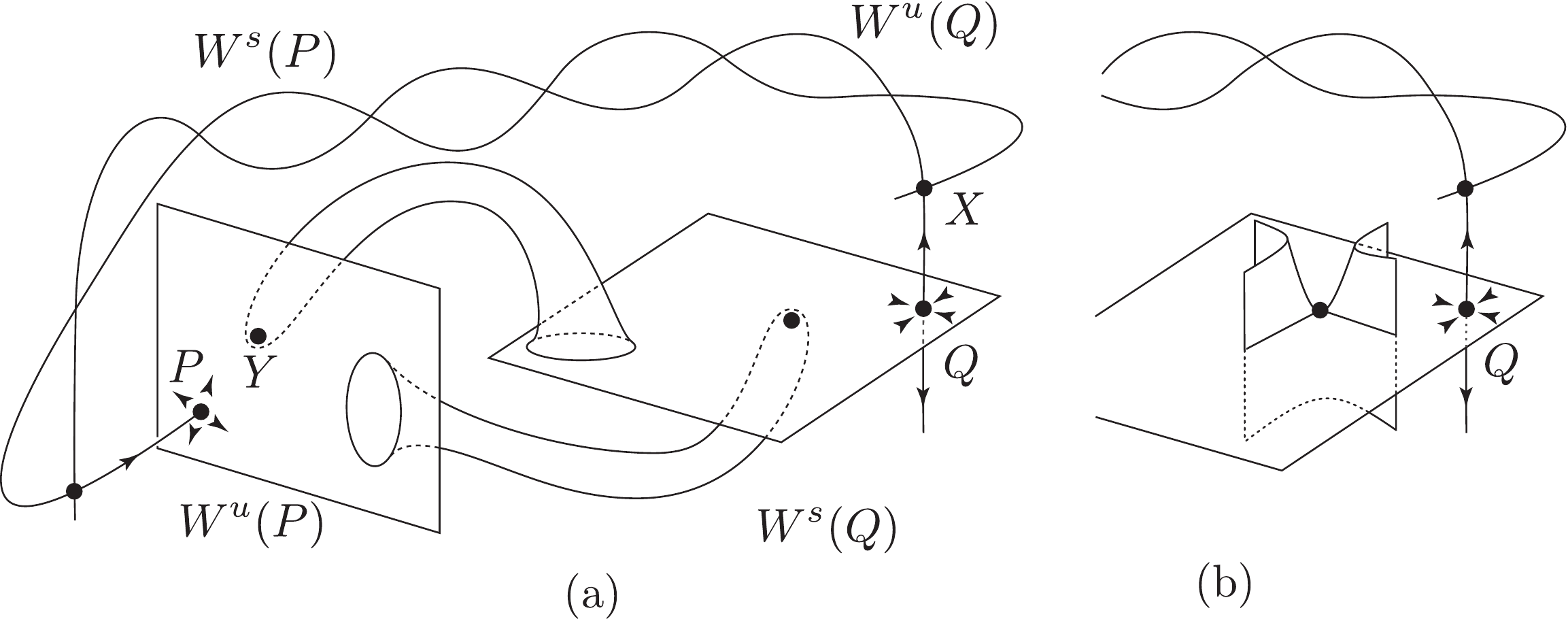}}
\caption{
Non-transverse heterodimensional cycles
}
\label{fig1}
\end{figure}

The non-transverse cycle that we consider contains a heterodimensional tangency.
In the $C^1$ case, it is known that 
the unfolding of these tangencies leads to robustly non-dominated dynamics and in some
cases to very intermingled dynamics 
related to universal dynamics introduced in \cite{BD03},
see \cite{DNP06} for detail. 
In the $C^2$ case, the non-transverse cycles 
are \emph{not} special in the following sense:
there exists $C^{2}$ open setw of $n$-dimensional ($n>2$) diffeomorphisms 
having heterodimensional tangencies in a coindex-$(n-2)$ heterodimensional cycle, 
see \cite{KS12}. Note that the occurence of $C^{2}$ Newhouse phenomenon  
and strange attractors were observed in \cite{KNS10}.

Using Theorem~\ref{thm.main2} and the renormalization scheme
that we will present below, we prove
the existence of blenders
near some of these non-transverse heterodimensional cycles
under high regularity assumptions, see
Corollary~\ref{cor.kiriki}. Let us observe that by adjusting the parameters,
we can obtain convergence of the renormalization to different types of H\'enon-like
maps, see the comment after Theorem~\ref{thm.main1}. 
This illustrates the richness of the dynamics near these cycles.

\subsection*{Renormalization}
Renormalization means providing
a sequence of local coordinate changes near a 
tangency point and
reparametrizations, which gives a sequence of
return maps along heteroclinic orbits converging to a limit map with interesting 
dynamics. Using a renormalization scheme one can  translate relevant properties of the limit maps
to some diffeomorphisms close to the one with the tangency.

Renormalization methods play an important role in the study of
homoclinic bifurcations (dynamics at homoclinic tangencies). 
This method leads to the approximation of 
the dynamics by quadratic families and allows to translate properties of such families
(such as existence of strange attractors and sinks, or thick hyperbolic sets) to properties of the diffeomorphisms,
see for instance \cite[Chapter 3]{PT93}. 

So far, renormalizations have not 
been sufficiently exploited in the context of heterodimensional bifurcations.
We consider renormalization schemes
for non-transverse heterodimensional cycles. 
Depending on the conditions satisfied by the diffeomorphism, this renormalization 
may converge to 
different types of dynamics. 
We wonder if this limit map may exhibit blenders.
In general, it is not always the case.
We prove that, under some (degenerate) conditions, 
the renormalization converges to the center unstable H\'{e}non-like families 
in Theorem~\ref{thm.main2}, see Theorem~\ref{thm.main1}.

We now state precise definitions and results.

\subsection*{Definitions and statements of the results}
Consider diffeomorphisms $f$ defined on a closed  
three-dimensional manifold $M$  having hyperbolic
periodic points $P$ and $Q$ of saddle type with different 
{\emph{indices}}
(dimension of the unstable bundle and denoted by $\ind(\cdot)$).
We assume that $f$ has a {\emph{non-transverse heterodimensional cycle}} 
associated to the saddles  $P$ and $Q$ (assume that $\ind(P)=2 >\ind (Q)=1$). 
This means that 
$$
 W^{s}(P)\cap W^{u}(Q)\neq \emptyset \quad \mbox{and} \quad
 W^{s}(Q)\cap W^{u}(P)\neq \emptyset,
 $$
 where
 the first intersection between one-dimensional manifolds is \emph{quasi-transverse},  that is, there is 
 $X\in W^s(P) \cap W^u(Q)$ such that
\begin{equation}\label{e.quaset}
\dim (T_{X}W^{s}(P)+T_{X}W^{u}(Q))=
\dim (E^s(P)) + \dim (E^u(Q)), 
\end{equation}
and the two-dimensional manifolds 
$W^{u}(P)$ and  $W^{s}(Q)$ have a non-transverse intersection
(a tangency)
along the orbit of a point $Y$.
This geometrical configuration  is depicted in
 Figure \ref{fig1} (the tangency in (a) is 
\emph{elliptic} and the one in (b) is  \emph{hyperbolic}, see \cite{KNS10}).

Associated to the heteroclinic point $X$,
there is a transition
from a neighborhood $U_Q$  of $Q$ to a neighborhood
$U_P$ of $P$ following the orbit of $X$. 
Similarly, associated to $Y$ 
there is a transition 
from $U_P$ to $U_Q$, see also Figure~\ref{fig4}. 
We impose some conditions to these transitions 
in equation \eqref{tech-assumption1a}.

Let
$\mathrm{per}(P)$ and $\mathrm{per}(Q)$
be the periods of $P$ and $Q$, respectively.
Denote the eigenvalues of 
$Df^{\mathrm{per}(P)}(P)$ and $Df^{\mathrm{per}(Q)}(Q)$
by $\tilde{\lambda}$, $\tilde{\zeta}$, $\tilde{\sigma}$ and
$\lambda$, $\zeta$, $\sigma$, respectively.
We assume that
\begin{equation} \label{eigenvalue condition 1}
 |\tilde{\lambda}|< 1<|\tilde{\sigma}|< |\tilde{\zeta}|,
\qquad
 |\lambda|< |\zeta|<1<|\sigma|
\end{equation} 
and 
\begin{equation}\label{eigenvalue condition 2}
\left| |\tilde{\sigma}\tilde{\zeta}|^{k} \sigma\zeta^{2} \right|,\ 
\left| |\tilde{\sigma}^{-3}\tilde{\zeta}|^{k} \sigma^{-1} \right|,\ 
\left| |\tilde{\lambda}\tilde{\sigma}|^{k} \sigma \right|
<1, \quad \mbox{where 
$k=\dfrac{\log\vert {\lambda} \vert^{-1}}{\log\vert \tilde\zeta \vert}$.} 
\end{equation}

As we mentioned, bifurcations of non-transverse heterodimensional
cycles exhibit a rich variety of dynamics.
Depending on the way of unfolding, one may reach different 
types of dynamics. 
In the following theorem,  starting from a six-parameter family of diffeomorphisms 
(where the parameters describe the position of the continuations of 
the heteroclinic points above),
we select a two-parameter sub-family  
converging to a H\'{e}non-like family:
there is a renormalization scheme near the tangency providing
a sequence of maps $\{ F_{\mu_k (\bar \mu),\nu_k} \}$, 
repa\-ra\-me\-tri\-za\-tions $\mu_{k}$, and parameters $\nu_{k}$ such that 
the family converges to a H\'{e}non-like family.

\begin{theorem}\label{thm.main1}
Consider  $f\in \Diff^{r}(M)$, $\dim(M)=3$ and
$r\geq 2$,
with a non-transverse heterodimensional cycle
associated with saddles
$P$ and $Q$ with heteroclinic orbits 
$X$ (quasi-transverse point) and $Y$ (tangency point). 

Assume that 

\begin{itemize}
\item The local dynamics around $P$ and $Q$ are linearized as  in  (\ref{eq.linearization}), 
\item the eigenvalues of $Df^{\mathrm{per}(P)}(P)$ and $Df^{\mathrm{per}(Q)}(Q)$
satisfy  (\ref{eigenvalue condition 1}) and (\ref{eigenvalue condition 2}),
\item the transitions between $P$ and $Q$ satisfy
(\ref{tech-assumption1a}),(\ref{tech-assumption2}) and (\ref{non-zero}).
\end{itemize}
Then there is a six-parameter family $\{f_{\mu,\nu} \}_{\mu, \nu\in [-\epsilon,\epsilon]^3} \subset \Diff^{r}(M)$
 with $f_{\mathbf{0},\mathbf{0}}=f$,
satisfying the following:
for any real number $\xi > 0$,
there are
\begin{itemize}
\item
 a sequence of coordinate changes
  $\Psi_{k}: K \to M$ 
 defined
  near the tangency
point $Y$,
where $K$ is a compact neighborhood of the origin of  $\mathbb{R}^3$
that can be taken arbitrarily large,
\item
reparametrizations 
$
\mu_k \colon I \to \mathbb{R}^3,
$
where $I$ is a compact neighborhood of $0$ in $\mathbb{R}$ that can be taken arbitrarily large,
\item parameter values
$\nu_{k}\in \mathbb{R}^{3}$, 
\item
a sequence of pairs of 
natural numbers $(m_k, n_k)_k$, $m_k, n_k\to \infty$, and
natural numbers
$N_{1}$ and $N_{2}$ independent of $k$ and $\xi$,
\end{itemize}
such that the map 
\begin{equation}\label{eq.renormalization}
F_{\mu_{k}(\bar{\mu}),\nu_{k}}(\bar {\mathbf{x}})=
(\Psi_{k}^{-1}|_{\Psi_{k}(K)} )
\circ f^{N_{2}+m_k+N_{1}+n_k}_{\mu_{k}(\bar{\mu}),\nu_{k}}
\circ \Psi_{k}(\bar{\mathbf{x}})
\end{equation}
satisfies the following properties:  
\begin{enumerate}[{(}1{)}]
\item \label{thm.main1(1)}
Suppose $(\bar{\mu}, \bar{\mathbf{x}})$ is contained in 
a (fixed) compact set of $\mathbb{R}\times \mathbb{R}^3$,
then  the sequence
$\{(\mu_{k}(\bar{\mu}), \nu_{k})\}$ converges
to  $(\mathbf{0},\mathbf{0})\in \mathbb{R}^6$ 
and the sequence $\{\Psi_{k}(\bar{\mathbf{x}})\}$ converges to 
the tangency point $Y$ as $k \to +\infty$.
\item \label{thm.main1(2)}
The sequence of maps $\{ F_{{\mu_{k}(\bar{\mu}),\nu_{k}}} \}$
converges in the $C^{r}$ topology  to
a one-parameter family conjugate to 
\begin{equation}\label{limit return map}
G_{ \mu}({x}, {y}, {z})=
G_{\xi, \mu,\kappa_{1}, \kappa_{2}}({x}, {y}, {z})=
(\xi {x}+  {y},\ 
{\mu}+{y}^{2} + \kappa_{1} {x}^{2}+ \kappa_{2} {x} {y},\
y),
\end{equation}
where $\kappa_{1}, \kappa_{2}$ are constants depending only on $f$.
\end{enumerate}
\end{theorem}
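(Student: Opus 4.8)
The plan is to compute the return map \eqref{eq.renormalization} explicitly in the linearizing coordinates of \eqref{eq.linearization}, and to choose the transition times $m_k,n_k$, the (anisotropic, exponentially shrinking) affine rescalings hidden in $\Psi_k$, and the reparametrizations $\mu_k,\nu_k$ so that the dominant part of the composition is exactly $G_{\xi,\mu,\kappa_1,\kappa_2}$ while every remaining term tends to $0$ in the $C^r$ topology.

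\emph{Setup.}
Fix linearizing coordinates near $P$ and near $Q$ as in \eqref{eq.linearization}, so that $f^{\mathrm{per}(P)}$ and $f^{\mathrm{per}(Q)}$ act, near $P$ and near $Q$, as the diagonal maps with eigenvalues $(\tilde\lambda,\tilde\sigma,\tilde\zeta)$ and $(\lambda,\zeta,\sigma)$; in these coordinates $W^s_{\loc}(P)$ and $W^u_{\loc}(Q)$ are coordinate axes and $W^u_{\loc}(P)$ and $W^s_{\loc}(Q)$ are coordinate planes. Write the two transitions — from $U_Q$ to $U_P$ following the orbit of the quasi-transverse point $X$, and from $U_P$ to $U_Q$ following the orbit of the tangency point $Y$ — as their order-$r$ Taylor polynomials; the hypotheses \eqref{tech-assumption1a}, \eqref{tech-assumption2} and \eqref{non-zero} normalize the relevant coefficients. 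The quasi-transversality \eqref{e.quaset} forces the linear part of the first transition to contain the off-diagonal term that will become the summand $y$ in the first component of $G$, and the quadratic tangency at $Y$ contributes the nonzero second-order coefficient that will become $y^2$ in the second component. Take the six parameters $(\mu,\nu)\in\mathbb R^3\times\mathbb R^3$ to be the translations of these two transitions unfolding, respectively, the quasi-transverse intersection at $X$ and the tangency at $Y$; this defines the family $\{f_{\mu,\nu}\}$ with $f_{\mathbf 0,\mathbf 0}=f$.

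\emph{The return map.}
Reading \eqref{eq.renormalization} as a composition,
\[
F_{\mu_k(\bar\mu),\nu_k}=\Psi_k^{-1}\circ f^{N_2}\circ f^{m_k}\circ f^{N_1}\circ f^{n_k}\circ\Psi_k,
\]
where, for large $k$, $\Psi_k$ places the rescaled box $K$ around $Y$ inside $U_Q$, $f^{n_k}$ is the linear dynamics near $Q$, $f^{N_1}$ is the fixed transition along the orbit of $X$ into $U_P$, $f^{m_k}$ is the linear dynamics near $P$, and $f^{N_2}$ is the fixed transition along the orbit of $Y$ back to $U_Q$ near $Y$. Composing the linear maps with the two Taylor expansions yields an explicit map of $(\bar{\mathbf x},\bar\mu)$ whose coefficients are the fixed transition coefficients times monomials in $\lambda^{\pm n_k},\zeta^{\pm n_k},\sigma^{\pm n_k},\tilde\lambda^{\pm m_k},\tilde\sigma^{\pm m_k},\tilde\zeta^{\pm m_k}$. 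I would then: choose the scaling in $\Psi_k$, and the matching scaling in $\mu_k$, so that the quadratic tangency coefficient is normalized to $1$, which pins down the order of magnitude of the rescaling; choose $n_k$, and $m_k$ with $m_k/n_k\to k=\log|\lambda|^{-1}/\log|\tilde\zeta|$, so that the strong-stable contraction $\lambda^{n_k}$ near $Q$ and the strong-unstable expansion $\tilde\zeta^{m_k}$ near $P$ become comparable — which is exactly what produces a finite, non-degenerate limit (in particular with third component $z\mapsto y$) rather than a divergent one; and use the residual freedom in $(m_k,n_k)$ and in the rescaling to drive the coefficient of $x$ in the first component to the prescribed $\xi$. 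With these choices the dominant part of the composition is $G_{\xi,\mu,\kappa_1,\kappa_2}$, with $\kappa_1,\kappa_2$ determined by the fixed transition coefficients, the parameter $\mu$ obtained from $\bar\mu$ through the reparametrization $\mu_k$, and $\nu_k\to\mathbf 0$. Since the rescalings contract $K$ to $Y$ and $(\mu_k(\bar\mu),\nu_k)\to(\mathbf 0,\mathbf 0)$, this already gives part~\ref{thm.main1(1)}.

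\emph{Main obstacle.}
The heart of the argument is part~\ref{thm.main1(2)}: one must show that \emph{all} terms other than those assembling into $G_{\xi,\mu,\kappa_1,\kappa_2}$, together with their partial derivatives up to order $r$, tend to $0$ uniformly on the arbitrarily large compact sets $K\subset\mathbb R^3$ and $I\subset\mathbb R$. After the rescaling, each such term is dominated by a power of one of the eigenvalue products $|\tilde\sigma\tilde\zeta|^{k}\sigma\zeta^{2}$, $|\tilde\sigma^{-3}\tilde\zeta|^{k}\sigma^{-1}$ or $|\tilde\lambda\tilde\sigma|^{k}\sigma$; condition \eqref{eigenvalue condition 2} says precisely that each of these has modulus $<1$, so raising it to the power $n_k$ (with $m_k\sim k\,n_k$) kills the term. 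The delicate point is that differentiating with respect to the rescaled variable reintroduces positive powers of the large rescaling factor, so the estimates must be arranged so that these are always dominated by the contracting powers above — this, together with $r\ge 2$, is where the full strength of \eqref{eigenvalue condition 2} enters. A final, routine step is to solve for the reparametrization $\mu_k$ (affine in $\bar\mu$ to leading order) and to verify $\nu_k\to\mathbf 0$, which completes the proof.
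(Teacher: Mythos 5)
Your outline matches the paper's strategy in its skeleton: you correctly identify the composition structure $\Psi_k^{-1}\circ f^{N_2}\circ f^{m_k}\circ f^{N_1}\circ f^{n_k}\circ\Psi_k$, the role of the anisotropic rescalings, the balancing $m_k/n_k\to k=\log|\lambda|^{-1}/\log|\tilde\zeta|$, and the use of \eqref{eigenvalue condition 2} to kill the higher-order terms in the $C^r$ topology. That is exactly the skeleton of Section 5.

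However, there is a genuine gap in the step where you ``use the residual freedom in $(m_k,n_k)$\dots to drive the coefficient of $x$ in the first component to the prescribed $\xi$.'' This is not free. The coefficient in question is, up to lower-order corrections, $\gamma_1 a_3\lambda^{n_k}\tilde\zeta^{m_k}$, and for this to converge to an \emph{arbitrary} prescribed $\xi>0$ you need the set $\{m\log\tilde\zeta+n\log\lambda:\ m,n\in\mathbb N\}$ to be dense in $\mathbb R$. That fails, for instance, whenever $\log\tilde\zeta/\log\lambda$ is rational; then the expression only takes values in a discrete subgroup and only countably many $\xi$ are attainable. The constraint $m_k/n_k\to k$ only forces $\lambda^{n_k}\tilde\zeta^{m_k}$ to stay bounded and bounded away from $0$, not to converge to any specified limit. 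The paper handles this with an explicit arithmetic input — Lemma~\ref{lem.Neutrality} and Claim~\ref{cla.residual set}, a Baire-category argument showing that for $(\tilde\zeta,\lambda)$ in a residual subset $\mathcal R$ of eigenvalue pairs, the required density holds, and one can select $(m_k,n_k)$ with $|m_k-n_k k-\tilde k|<1$ and $\gamma_1 a_3\lambda^{n_k}\tilde\zeta^{m_k}\to\xi$. This lemma is the mechanism that turns ``for some accumulation value of the coefficient'' into ``for every $\xi>0$,'' and it also supplies the quantitative bound $|m-nk-\tilde k|<1$ that is used repeatedly in the higher-order estimates (e.g.\ to bound $\sigma^n\tilde\sigma^m\tilde\zeta^m\zeta^{2n}$ by a power of $|\tilde\sigma\tilde\zeta|^k\sigma\zeta^2<1$). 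Without making this assumption explicit and proving the density, the argument that you can hit any prescribed $\xi$ does not close.

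A secondary, smaller imprecision: you attribute the $y$-summand in the first component of the limit map to the quasi-transversality condition \eqref{e.quaset}. In the paper this coefficient is $a_2\beta_2$, coming from the transition coefficients in \eqref{eq.transition2} and \eqref{eq.transition1}; its nonvanishing relies on the normalization \eqref{tech-assumption1a} and the fact that the transition is a diffeomorphism (yielding \eqref{tech-assumption1b}), not directly on \eqref{e.quaset}. This does not break the argument but should be stated correctly, since \eqref{tech-assumption1a} is one of the technical hypotheses you are invoking.
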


An immediate consequence of this theorem and Theorem~\ref{thm.main2} 
is the following:

\begin{corollary}\label{cor.kiriki}
Let $f$ be a $C^{r}$ diffeomorphisms ($r \geq 2$) satisfying the hypotheses of 
Theorem~\ref{thm.main1}. Then every $C^{r}$-neighborhood of $f$ 
contains an open set of diffeomorphisms having blenders.
\end{corollary}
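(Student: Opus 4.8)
The plan is to feed the limit family of Theorem~\ref{thm.main1} into Theorem~\ref{thm.main2} and then transport the resulting blender, together with its robustness, back along the chain of conjugacies and $C^{r}$-limits to the original six-parameter family $\{f_{\mu,\nu}\}$. In outline: for a suitable choice of the free parameter $\xi>0$, Theorem~\ref{thm.main1} produces renormalized maps $F_{\mu_{k}(\bar\mu),\nu_{k}}$ converging in $C^{r}$ to a one-parameter family conjugate to $G_{\xi,\mu,\kappa_{1},\kappa_{2}}$; Theorem~\ref{thm.main2} says a member of this family carries a blender, and blenders are $C^{r}$-robust, so for $k$ large the maps $F_{\mu_{k}(\bar\mu),\nu_{k}}$ have blenders; by \eqref{eq.renormalization} these translate into blenders of $f_{\mu_{k}(\bar\mu),\nu_{k}}$, which by part~\eqref{thm.main1(1)} lie arbitrarily $C^{r}$-close to $f$.

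The first step is the choice of parameters. Since the constants $\kappa_{1}=\kappa_{1}(f)$ and $\kappa_{2}=\kappa_{2}(f)$ in \eqref{limit return map} are determined by $f$ alone, I would pick $\xi>0$ and a value $\mu^{*}$ of the remaining scalar parameter so that $(\xi,\mu^{*},\kappa_{1},\kappa_{2})$ lies in the open parameter set $\mathcal{B}$ of Theorem~\ref{thm.main2}. Checking that such a choice exists --- i.e.\ that the pairs $(\kappa_{1},\kappa_{2})$ which can arise from a diffeomorphism satisfying the hypotheses of Theorem~\ref{thm.main1} (in particular the eigenvalue conditions \eqref{eigenvalue condition 1}--\eqref{eigenvalue condition 2} and the transition conditions) are compatible with the region $\mathcal{B}$ constructed in the proof of Theorem~\ref{thm.main2} --- is in my view the only genuinely delicate point; the rest is a formal limiting argument. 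Having fixed such a $\xi$, I apply Theorem~\ref{thm.main1} and obtain the family $\{f_{\mu,\nu}\}$ with $f_{\mathbf 0,\mathbf 0}=f$, the coordinate changes $\Psi_{k}$, the reparametrizations $\mu_{k}$, the parameters $\nu_{k}$, the integers $m_{k},n_{k},N_{1},N_{2}$, and the renormalized maps of \eqref{eq.renormalization}.

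Next I would specialize to $\bar\mu=\mu^{*}$ and apply Theorem~\ref{thm.main1}\eqref{thm.main1(2)}: on the compact neighborhood $K$ the maps $F_{\mu_{k}(\mu^{*}),\nu_{k}}$ converge in $C^{r}$ to a map $F_{\infty}$ conjugate, by a fixed smooth change of coordinates, to $G_{\xi,\mu^{*},\kappa_{1},\kappa_{2}}$. By Theorem~\ref{thm.main2} (applied with $F=G_{\xi,\mu^{*},\kappa_{1},\kappa_{2}}$ itself) this map has a blender; since having a blender is invariant under smooth conjugacy, $F_{\infty}$ has a blender, and since blenders are $C^{r}$-robust every map $C^{r}$-near $F_{\infty}$ has one too, so $F_{\mu_{k}(\mu^{*}),\nu_{k}}$ has a blender for all large $k$. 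I would then push the blender down to $M$: by \eqref{eq.renormalization}, $F_{\mu_{k}(\mu^{*}),\nu_{k}}$ is the $\Psi_{k}$-conjugate of the return map $f^{N_{2}+m_{k}+N_{1}+n_{k}}_{\mu_{k}(\mu^{*}),\nu_{k}}$ restricted to $\Psi_{k}(K)$, so a blender of $F_{\mu_{k}(\mu^{*}),\nu_{k}}$ in $K$ maps, under $\Psi_{k}$, to a blender of that return map in $\Psi_{k}(K)$, and saturating by the $f_{\mu_{k}(\mu^{*}),\nu_{k}}$-orbit yields a blender of $f_{\mu_{k}(\mu^{*}),\nu_{k}}$ itself --- the finitely many transition steps between successive returns do not interfere with the crossing property, whose geometry is concentrated in $\Psi_{k}(K)$.

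To conclude, I would note that for fixed $k$ the map $g\mapsto \Psi_{k}^{-1}\circ g^{\,N_{2}+m_{k}+N_{1}+n_{k}}\circ\Psi_{k}$ is $C^{r}$-continuous, so robustness of the blender for $F_{\mu_{k}(\mu^{*}),\nu_{k}}$, combined with the same conjugacy-and-saturation step, produces a $C^{r}$-open set $\mathcal{W}_{k}\subset\Diff^{r}(M)$ containing $f_{\mu_{k}(\mu^{*}),\nu_{k}}$ and consisting of diffeomorphisms with blenders. Finally, by Theorem~\ref{thm.main1}\eqref{thm.main1(1)} we have $(\mu_{k}(\mu^{*}),\nu_{k})\to(\mathbf 0,\mathbf 0)$ and hence $f_{\mu_{k}(\mu^{*}),\nu_{k}}\to f_{\mathbf 0,\mathbf 0}=f$ in $\Diff^{r}(M)$; so, given any $C^{r}$-neighborhood $\mathcal{N}$ of $f$, taking $k$ large enough that $f_{\mu_{k}(\mu^{*}),\nu_{k}}\in\mathcal{N}$ makes $\mathcal{W}_{k}\cap\mathcal{N}$ a nonempty $C^{r}$-open subset of $\mathcal{N}$ all of whose elements have blenders, as required. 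Besides the parameter-matching of the first step, the points that need (routine) care are the invariance of the blender property, and of its robustness, under the smooth conjugacy $\Psi_{k}$ and under passing from the return iterate to $f_{\mu_{k}(\mu^{*}),\nu_{k}}$.
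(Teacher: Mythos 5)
Your overall plan --- feed the limit family from Theorem~\ref{thm.main1} into Theorem~\ref{thm.main2}, get blenders in the renormalized maps for large $k$, transport them back through $\Psi_k$ to blenders of $f_{\mu_k(\mu^*),\nu_k}$, and conclude via part~\eqref{thm.main1(1)} --- is precisely the argument the paper has in mind; the paper itself simply labels the corollary an ``immediate consequence'' and gives no further details. The saturation step, the $C^1$-robustness of blenders (hence $C^r$-openness of $\mathcal W_k$), and the convergence $f_{\mu_k(\mu^*),\nu_k}\to f$ in $C^r$ are all handled correctly. One small imprecision: you say you apply Theorem~\ref{thm.main2} with $F=G_{\xi,\mu^*,\kappa_1,\kappa_2}$ ``itself'' and conclude $G$ has a blender; $G$ is an endomorphism, not a diffeomorphism, so the theorem yields instead that every \emph{diffeomorphism} $C^r$-close to $G$ has a blender. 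This is what you actually use (the diffeomorphisms $\Theta^{-1}\circ F_{\mu_k(\mu^*),\nu_k}\circ\Theta$ are eventually that close), so it is harmless, but the phrasing should be adjusted.

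The point you single out as the ``only genuinely delicate'' one is, however, a real gap, and neither your proposal nor the paper fills it. From \eqref{kappa-conditions}, $\kappa_1=(\xi a_2a_3^{-1})^2b_2^{-1}b_3$ and $\kappa_2=\xi a_2(a_3b_2)^{-1}b_4$, so as $\xi$ varies over the admissible range $(\kappa_1,\kappa_2)$ traces a fixed curve $\kappa_1=C_1\xi^2$, $\kappa_2=C_2\xi$, with $C_1,C_2$ pinned down by the transition coefficients of $f$ (and those coefficients themselves are fully normalized once $X$, $\tilde X$, $Y$, $\tilde Y$ are placed at $(0,1,0)$, $(1,0,0)$, $(0,1,1)$, $(1,0,1)$, so there is no residual scaling freedom). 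But the explicit open set in the proof of Theorem~\ref{thm.main2}, namely $\mathcal O\times(-\varepsilon,\varepsilon)$ from \eqref{parameters/box for blenders}, forces $\xi\in(1.18,1.19)$, $\kappa_1\in(0,10^{-4})$ and $|\kappa_2|<\varepsilon$. Nothing in hypotheses \eqref{eigenvalue condition 1}--\eqref{eigenvalue condition 2} or \eqref{tech-assumption1a}--\eqref{non-zero} forces $C_1$ to be positive and small, nor $C_2$ to be small, so for a generic $f$ satisfying the stated hypotheses the curve need not meet $\mathcal B$. To close the corollary as stated one must either enlarge the parameter region of Theorem~\ref{thm.main2} (e.g.\ treat $\kappa$ of either sign and arbitrary size, perhaps by rescaling the box $\Delta$ and the parameters jointly), or add an explicit compatibility hypothesis on the transition coefficients of $f$. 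Your instinct to flag this as the crux was correct; it is not a ``routine'' check that can be deferred.
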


Let us give a comment to Theorem~\ref{thm.main1}.  
The numbers $(m_k, n_{k})$ correspond to 
the consecutive times during which              
the points stay close to the saddles $P$ and $Q$, respectively.
The selection of these numbers determines convergence of the renormalization
and the number $\xi$. This choice may lead to $\xi>1$ 
(center unstable H\'enon-like maps)
or $\xi<1$ (center stable H\'enon-like maps). This means that arbitrarily
close to the original system there are both types of dynamics.

\vskip .2cm

\vskip .2cm

\subsection*{Connecting one-dimensional invariant manifolds}
We now show an application of the methods 
above. 
To exploit completely the consequence of our techniques is 
beyond the goal of this paper (it is a part of on-going research project).

Theorem \ref{thm.main1} asserts 
that arbitrarily $C^r$-close to certain types of diffeomorphisms $f$ 
having a non-transverse heterodimensional cycle,
there exist diffeomorphisms $g$ 
having center unstable blenders $\Lambda_g$
near the point of tangency.
The renormalization gives us some local information, 
but does not provide information about the 
connections  between $\Lambda_g$ and
the continuations $P_g$, $Q_g$ of  $P$, $Q$ (semi-global information). 
Since the dynamics is not dominated close to the tangency point, 
it is not easy to describe the relative positions of the invariant manifolds 
of the these sets. 

Under some additional hypotheses on the contracting multipliers of $Q$, 
the next theorem gives the creation of 
 robust intersections between the ``one-dimensional'' 
invariant manifolds $W^{s}(\Lambda_{g}, g)$  and $W^u(Q_g,g)$.
This intersection is in principle harder to obtain 
than the one between the ``two-dimensional'' invariant manifolds.

\begin{theorem}\label{thm.main3}
Let $f$ be a diffeomorphism 
having a non-transverse heterodimensional 
cycle
associated to saddles $P$ and $Q$ 
satisfying the hypothesis of
Theorem \ref{thm.main1}. 
Consider a real number $\alpha >0 $ satisfying 
$\alpha < \frac{\log|\lambda|}{\log|\zeta|} -1$, 
where $|\lambda| <  |\zeta|$ are the moduli of contracting eigenvalues of 
$Df^{\mathrm{per}(Q)}(Q)$ in (\ref{eigenvalue condition 1}). 
Then, for every $r \geq 2$,
arbitrarily $C^{1+\alpha}$ close to $f$,
there exists a $C^{r}$ diffeomorphism $g$ 
having a center-unstable blender $\Lambda_{g}$ such that
$W_{\loc}^s(\Lambda_{g}, g) \cap W^u(Q_{g}, g) \neq \emptyset$ holds $C^{1}$-robustly. 
\end{theorem}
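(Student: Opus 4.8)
The plan is to combine the renormalization of Theorem~\ref{thm.main1}, applied with the free number $\xi$ chosen larger than $1$, with the superposition property of the blenders of Theorem~\ref{thm.main2} in order to produce a center unstable blender near the tangency point $Y$, and then to add a single extra perturbation, localized near $Q$, whose only role is to drag the one-dimensional manifold $W^u(Q)$ into the open region where the blender acts. The reason the statement only claims $C^{1+\alpha}$-closeness, rather than the $C^{r}$-closeness of Corollary~\ref{cor.kiriki}, is precisely that this last perturbation is $C^{1+\alpha}$-small but cannot be made $C^{r}$-small, and the admissible range $\alpha<\frac{\log|\lambda|}{\log|\zeta|}-1$ will fall out of a balance of scales near $Q$. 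Throughout, the blender is treated as a hyperbolic set of the diffeomorphism of $M$, so its superposition property and its persistence are $C^{1}$-robust in $\Diff^{1}(M)$.

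First I would run the construction behind Corollary~\ref{cor.kiriki}: apply Theorem~\ref{thm.main1} with some $\xi>1$, obtaining the family $\{f_{\mu,\nu}\}$, the charts $\Psi_{k}$, the reparametrizations $\mu_{k}$, the parameters $\nu_{k}$, and the renormalized maps $F_{\mu_{k}(\bar{\mu}),\nu_{k}}$ converging in $C^{r}$ to a family conjugate to the center unstable H\'enon family $G_{\xi,\mu,\kappa_{1},\kappa_{2}}$ of (\ref{limit return map}). Since $\kappa_{1},\kappa_{2}$ depend only on $f$, I would fix $\xi$ and a value of the remaining parameter $\bar{\mu}$ for which the limit family exhibits the blender of Theorem~\ref{thm.main2}; by $C^{r}$-convergence and robustness of blenders, for all large $k$ the map $F_{\mu_{k}(\bar{\mu}),\nu_{k}}$ carries a center unstable blender $\Lambda_{k}$ inside the chart, with a superposition region $\mathcal{D}$ and a strong-unstable cone field $\mathcal{C}$, and transporting by $\Psi_{k}$ gives a diffeomorphism $g_{0}=f_{\mu_{k}(\bar{\mu}),\nu_{k}}$, as $C^{r}$-close to $f$ as we wish, with a center unstable blender $\Lambda_{g_{0}}$ in a small neighborhood of $Y$. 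Next I would locate $W^u(Q)$ inside this renormalized picture. Because $W^{u}(Q)$ and $W^{s}(P)$ meet quasi-transversally at $X$ with non-zero strong-unstable component, which is condition~(\ref{non-zero}), the inclination lemma in the linearizing chart at $P$ shows that the forward iterates of a fundamental domain of $W^{u}(Q_{g_{0}})$ accumulate, in the $C^{1}$ topology, on a segment of the strong unstable manifold of $P_{g_{0}}$, while the orbit defining the renormalized return map shadows exactly the heteroclinic loop through $X$ and $Y$. Tracking a fundamental domain of $W^{u}(Q_{g_{0}})$ through the transitions and the $m_{k}$ iterates near $P$, as in the proof of Theorem~\ref{thm.main1}, one finds that some forward iterate of it reappears inside the chart as a $C^{1}$-curve $C^{1}$-close to a segment $\{y=y_{0},\,z=z_{0}\}$ of $G$, i.e.\ tangent to the strong-unstable direction; its transverse position $(y_{0},z_{0})$, however, is governed by the position of $W^{u}(Q)$ near $Q$ and cannot be prescribed inside the renormalization, because the one free parameter $\bar{\mu}$ has already been spent on the blender, so in general this curve misses $\mathcal{D}$.

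To remedy this I would insert, for large $k$, a $C^{1+\alpha}$-small perturbation $\phi_{k}$ of $g_{0}$ supported in a small box around $Q$, translating the relevant branch of $W^{u}(Q)$ along a stable direction of $Q$. A count of the scales along the loop is what produces the exponent: the branch of $W^{u}(Q)$ that the renormalization orbit meets after its $n_{k}$ iterates near $Q$ lies in a box whose extent in the weak stable direction is of order $|\zeta|^{n_{k}}$, whereas the displacement needed to slide the segment $\{y=y_{0},z=z_{0}\}$ across $\mathcal{D}$ turns out, once the amplification through the $m_{k}$ iterates near $P$ and the resonance $|\tilde{\zeta}|^{-m_{k}}\asymp|\lambda|^{n_{k}}$ built into the choice of $(m_{k},n_{k})$ (the exponent $k$ of (\ref{eigenvalue condition 2})) are accounted for, to be a translation of order $|\lambda|^{n_{k}}$ near $Q$. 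Since a bump producing displacement of order $d_{k}$ in a box of size of order $\rho_{k}$ has $C^{1+\alpha}$-norm of order $d_{k}\rho_{k}^{-(1+\alpha)}$, here the cost is of order $|\lambda|^{n_{k}}|\zeta|^{-(1+\alpha)n_{k}}=\bigl(|\lambda|\,|\zeta|^{-(1+\alpha)}\bigr)^{n_{k}}$, which tends to $0$ as $k\to\infty$ exactly when $|\lambda|<|\zeta|^{1+\alpha}$, that is, when $\alpha<\frac{\log|\lambda|}{\log|\zeta|}-1$; in particular $\|\phi_{k}\|_{C^{1}}\to0$ as well, so the hyperbolic set $\Lambda_{g_{0}}$ continues to a blender $\Lambda_{g}$ of the perturbed map with the same $\mathcal{D}$ and $\mathcal{C}$. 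Thus for $k$ large the diffeomorphism $g$ obtained from $g_{0}$ by inserting $\phi_{k}$ is $C^{1+\alpha}$-close to $f$, is still $C^{r}$ (take $\phi_{k}$ smooth), carries the center unstable blender $\Lambda_{g}$, and now has a $C^{1}$-curve inside $W^{u}(Q_{g},g)$ crossing $\mathcal{D}$ tangentially to $\mathcal{C}$. By the defining superposition property of the blender (Definition~\ref{d.blender}) such a curve meets $W^{s}_{\loc}(\Lambda_{g},g)$, so $W^{s}_{\loc}(\Lambda_{g},g)\cap W^{u}(Q_{g},g)\neq\emptyset$. Finally, since the blender together with $\mathcal{D}$ and $\mathcal{C}$ persists under $C^{1}$-perturbations, $Q_{g'}$ and the relevant finite iterate of a fundamental domain of $W^{u}(Q_{g'})$ vary $C^{1}$-continuously with $g'$, and ``a $C^{1}$-curve crosses the open set $\mathcal{D}$ tangentially to the open cone field $\mathcal{C}$'' is a $C^{1}$-open condition, so this intersection holds $C^{1}$-robustly.

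The hard part, as I see it, is the combination of the last two steps: one must follow $W^{u}(Q)$ through the entire heteroclinic loop while keeping control of its $C^{1}$-geometry all the way down to the shrinking scale of the renormalized chart near $Y$, and then carry out honestly the bookkeeping of the contracting and expanding rates at $Q$ and $P$ --- in particular the matching $|\tilde{\zeta}|^{-m_{k}}\asymp|\lambda|^{n_{k}}$ and the anisotropic scalings hidden inside $\Psi_{k}$ --- that turns ``$\phi_{k}$ is $C^{1+\alpha}$-small'' into the sharp threshold on $\alpha$. Equally delicate is to design $\phi_{k}$ so that it genuinely moves the branch of $W^{u}(Q)$ that reaches the chart, by the required amount and in the required direction, using only its $C^{1}$-smallness to protect the blender, and to rule out that some other branch of $W^{u}(Q)$ spoils the configuration; this localization is where the real work of the proof lies.
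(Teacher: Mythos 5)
Your proposal follows essentially the same scheme as the paper's proof: a further $n$-dependent bump of size $\lambda^{n}$ supported in a $\zeta^{n}$-box on $W^{u}_{\loc}(Q)$ (the paper centers it at the heteroclinic point $X=(0,1,0)\in U_{Q}$ and shifts in the strong-stable $x$-direction), with $C^{1+\alpha}$-cost $(|\lambda||\zeta|^{-(1+\alpha)})^{n}\to 0$ exactly under $\alpha<\frac{\log|\lambda|}{\log|\zeta|}-1$, the $C^{1}$-smallness preserving the blender while the superposition property delivers the robust intersection. The steps you flag as the ``hard part'' are precisely what the paper carries out: Lemma~\ref{l.boxpert} verifies the bump is disjoint from the renormalization orbit, and Proposition~\ref{prop.sec5-prop1} replaces your qualitative inclination-lemma tracking by an explicit $C^{1}$-comparison of the image of the bumped arc $\ell\subset W^{u}(Q)$ with the model segment $\hat\ell$ that Remark~\ref{rem.bleint} already places in the superposition region.
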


Note that the $C^{1}$-robustness in the conclusion of 
Theorem~\ref{thm.main3} implies the $C^{r}$-robustness
of the connection for all $r > 1$.
The number $\alpha$ in the theorem is in $(0, 1/2)$, 
see Lemma \ref{lem.Katsutoshi} in Section \ref{ss.eieva}.

In the $C^{1}$-topology, the connecting lemma of Hayashi \cite{H97}
and the constructions in \cite{BD08} 
tell us that there are diffeomorphisms arbitrarily $C^{1}$-close to $f$ with robust cycles.
We wonder if this is possible to obtain such connections for systems $C^{r}$-close to $f$ 
for $r > 1$.
We use a detailed estimate on the hyperbolic behavior of the limit map obtained in the renormalization process 
to get robust intersections for $C^{1+\alpha}$-approximations.
As $\alpha$ is in $(0, 1/2)$, we do not obtain, for example,  a $C^{2}$-result.

The above result gives a connection from $Q_{g}$ to $\Lambda_{g}$.
However, in general it is also difficult to get
non-empty intersections between
$W^s(Q_{g}, g)$ and $W^u(\Lambda_{g}, g)$. This difficulty is due to the 
fact that the transition map from $P$ to $Q$
does not preserves the ``central direction''.
Thus, the question still remains whether or not a $C^r$-robust heterodimensional cycle
can be created from a bifurcation of the heterodimensional tangency.

To get such intersections one can consider two possible directions. 
The first one is to get a renormalization scheme providing 
further geometrical information of the cycles.
This strategy is well exploited in the renormalizations in \cite{PV94, R94}, 
where the dynamics is normally hyperbolic (which is not our case).
Another direction is to consider heterodimensional tangencies
as in \cite{DNP06} associated to robustly non-hyperbolic transitive sets. 
These sets have rich structures that may help to find such intersections.

\medskip

\medskip

\noindent{{\bf{Organization of the paper.}}}
In Section~\ref{s.Blender-Henon families}
we introduce the definition of a blender, state a result guaranteeing its existence,
and prove Theorem~\ref{thm.main2} about the occurrence of blenders for some
H\'enon-like families.
In Section \ref{s.Transitions} we describe the class of non-transverse heterodimensional cycles we consider. This description involves properties of the transitions between the saddles in the cycle following the non-transverse heteroclinic orbits and the local
dynamics near the saddles.
The six-parameter family of diffeomorphisms in  Theorem~\ref{thm.main1} is
presented in Section~\ref{ss.Local perturbations}.
In Section \ref{s.Proof of Theorem1}  
we introduce the renormalization scheme and prove
Theorem \ref{thm.main1}.
Finally, 
in Section \ref{s.proof-of-last-thm}, 
Theorem \ref{thm.main3}  is shown by previous theorems
and a certain $C^{1+\alpha}$ 
perturbation which is performed locally near the local unstable manifold of the continuation of $Q$.

\section{Blenders  for center unstable H\'enon-like maps}\label{s.Blender-Henon families}
In this section, we consider diffeomorphisms which are $C^{r}$-near
the center unstable H\'enon-like  endomorphism 
 \begin{equation}\label{e.new}
G_{\mu,\kappa,\xi}(x, y, z)= (y,  \mu+y^{2}+\kappa z^{2},    \xi z+y).
\end{equation}
This map is conjugate to the  H\'enon family in (\ref{e.henonfamily})
when $\eta=0$
by the coordinate change
\begin{equation}\label{coordinate change2}
\tilde{\Theta}: (x,y,z)\longmapsto(z,y,x).
\end{equation}
We will prove the existence of blenders for diffeomorphisms close to
$G_{\mu, \kappa, \xi}$ where the parameters are in some specific ranges
(see Theorem~\ref{thm.main2}).

\subsection{Conditions for the existence of blenders}\label{ss.Definitions and distinctive property}
Before going to the proof of the theorem,  
let us recall the definition of a blender and 
 sufficient conditions for the existence of blenders
in \cite[\S 1]{BD96}. 
First, we give an axiomatic definition of a blender. 

\begin{definition}[Blender, Definition 3.1 in \cite{BD12}]
\label{d.blender} {\em{Let $f\colon M\to M$ be a diffeomorphism. A
transitive  hyperbolic compact set $\Lambda$ of $f$ with 
index $k$, $k\ge 2$, is a {\emph{$cu$-blender}} if there are a
$C^1$-neighborhood $\mathcal{U}$ of $f$ and a $C^1$-open set $\mathcal{D}$ 
of embeddings of $(k-1)$-dimensional disks $D$ into $M$ such that
 for every
$g \in \mathcal{U}$ and
every disk $D\in \mathcal{D}$ the local stable manifold
$W^s_{\loc}(\Lambda_g)$ of $\Lambda_g$ (the continuation 
of $\Lambda$ for $g$) 
intersects $D$.
The set $\mathcal{D}$ is called the {\emph{superposition}}
set of the blender.}}
\end{definition}

We now give some preliminary definitions that we borrow from \cite{BD96}.
Let   $\Delta=I_{x}\times I_{y}\times I_{z}$ be a cube in $\mathbb{R}^{3}$ where 
$I_{x}=[x^{-}, x^{+}]$, $I_{y}=[y^{-}, y^{+}]$ and $I_{z}=[z^{-}, z^{+}]$ are intervals.
Divide the boundary of $\Delta$ into three parts as follows:
$$
\partial^{ss}\!\Delta:=  \partial I_{x}\times  I_{y}\times I_{z},\
\partial^{uu}\!\Delta:=I_{x}\times \partial I_{y}\times  I_{z},\
\partial^{u}\!\Delta:=I_{x}\times \partial(I_{y}\times I_{z}).
$$
Note that $\partial^{uu}\!\Delta\subset\partial^{u}\!\Delta$. For $\theta>1$ let
 $\mathcal{C}_{\theta}^{u}$, $\mathcal{C}_{\theta}^{uu}$, and $\mathcal{C}_{\theta}^{s}$ be
 cone fields defined as follows: for $p\in \Delta$, put 
 \begin{equation}\label{eqn.cones}
\begin{split}
&\mathcal{C}_{\theta}^{u}(p)=\left\{(u,v,w)\in T_{p} \Delta\ \bigr\vert\    \theta |u|\leqslant \sqrt{v^{2}+w^{2}}  \right\},\\[0.15cm]
&\mathcal{C}_{\theta}^{uu}(p)=\left\{(u,v,w)\in T_{p} \Delta\ \bigr\vert\    \theta\sqrt{u^{2}+w^{2}} \leqslant  |v| \right \},\\[0.15cm]
&\mathcal{C}_{\theta}^{s}(p)=\left\{(u,v,w)\in T_{p} \Delta\ \bigr\vert\    \theta\sqrt{v^{2}+w^{2}} \leqslant  |u| \right \}.
\end{split}
 \end{equation}
Note that $\mathcal{C}_{\theta}^{uu}(p)\subset \mathcal{C}_{\theta}^{u}(p)$.

Then we define as follows:
\begin{itemize}
\item
A regular curve $L\subset \Delta$ is  \emph{vertical}
if $T_{p} L\subset \mathcal{C}_{\theta}^{uu}(p)$ for every point $p$ in $L$, 
and the end-points of $L$ are contained in different connected components of $\partial^{uu}\!\Delta$.
\item
A surface $S\subset \Delta$ is called a   {\emph{vertical strip in $\Delta$}}
if
$T_{p} S\subset \mathcal{C}_{\theta}^{u}(p)$  for every $p$ in $S$
and there exists
a $C^{1}$ embedding $E:I_{y}\times J \rightarrow \Delta$ 
(where $J$ is a subinterval of $I_{z}$) such that
$L(z):=E(I_{y}\times\{z\} )$ is a vertical  curve  for every  $z\in  J$.
The width of $S$, denoted by $w(S)$, is the infimum of the length of the  
curves in $S$ which are
transverse to $\mathcal{C}_{\theta}^{uu}$ joining the two boundary components of  $L(\partial J)$.
\end{itemize}

Let $W$ be a curve in $\Delta$ tangent to the cone field $\mathcal{C}^{s}$
whose  endpoints  are contained in different connected components of  $\partial^{ss}\!\Delta$. Note that there are two different homotopy classes of vertical segments
through $\Delta$ disjoint from $W$. 

\begin{itemize}
\item
A vertical curve  $L$ in  $\Delta$ with $L\cap W=\emptyset$ 
is \emph{to the right of} $W$ if it is in the homotopy class of
$\{x^{-}\}\times I_{y}\times \{z^{+}\}$ for some $x_{0}\in I_{x}$.  
Similarly,
a vertical strip $S$ through $\Delta$ is \emph{to the right of} $W$
if any vertical curve in $S$ is to the right of $W$.
\end{itemize}

For  a three dimensional diffeomorphism $F$,
the next geometric conditions (H1)--(H5)
guarantee the  existence of a blender in $\Delta$, see \cite[\S 1]{BD96}:
\begin{enumerate}[{(H}1{)}]
\item There is a connected component $A$ of $\Delta\cap F(\Delta)$ 
disjoint from $\partial^{ss}\!\Delta\cup F(\partial^{u}\!\Delta)$.
\item There is a connected
component $B$ of $F(\Delta)\cap \Delta$ such that $B$ 
is disjoint
from $I_{x}\times I_{y}\times \{z^{+}\}$, from
 $\partial^{ss}\!\Delta$ and from $F(\partial^{uu}\!\Delta)$.
\item 
There are  $\theta>1$ and $\ell\in \mathbb{N}$ 
such that
the cone fields $\mathcal{C}_{\theta}^{u}$, $\mathcal{C}_{\theta}^{uu}$, 
and $\mathcal{C}_{\theta}^{s}$  
satisfy the following conditions: There is $c>1$ such that
\begin{enumerate}[{(}i{)}]
\item
For every $p\in F^{-1}(A \cup B)$ such that $F^i(p) \in \Delta$ for every
$i=0,\dots,\ell-1$ 
and every  $\mathbf{v}\in \mathcal{C}_{\theta}^{u}(p) \setminus \{\bf{0} \}$,
$(DF^i)_{p} \mathbf{v}$ belongs to the interior of
$\mathcal{C}_{\theta}^{u}(F^i(p))$
 and
$|(DF^\ell)_{p} \mathbf{v}|\geqslant c |\mathbf{v}|$.
\item
For every $p\in F^{-1}(A\cup B)$ and
every $\mathbf{v}\in \mathcal{C}_{\theta}^{uu}(p) \setminus \{\bf{0} \}$,
 $\mathbf{w}:=(DF)_{p} \mathbf{v}$ belongs to the interior of
$\mathcal{C}_{\theta}^{uu}(F(p)) $.

\item For every $p\in A\cup B$ and
every $\mathbf{v}\in \mathcal{C}_{\theta}^{s}(p) \setminus \{\bf{0} \}$,
 $\mathbf{w}:=(DF^{-1})_{p} \mathbf{v}$
 belongs to the interior of
$\mathcal{C}_{\theta}^{s}(F^{-1}(p))$  and $|\mathbf{w}|\geqslant c |\mathbf{v}|$.
\end{enumerate}
\end{enumerate}

Note that conditions (H1) and (H3) imply 
that $F$ has a (unique) hyperbolic fixed point $P_\ast$ 
in $A$ with index one. 
Let 
$W^{s}_{0}$ be the connected component
of $W^{s}( P_\ast)\cap \Delta$ containing $P_\ast$.
Observe that
the curve $W^{s}_{0}$ is tangent to the cone field $\mathcal{C}^{s}_\theta$.
So we can speak of vertical curves and strips 
being to the left or to the right of $W^{s}_{0}$.

\begin{itemize}
\item[(H4)]
There is a neighborhood $U^{-}$ of the left side $\{z=z^{-}\}$ of $\Delta$
so that every vertical strip $S$ through $\Delta$ to the right of
$W^{s}_{0}$ does not intersect $U^{-}$.

\item[(H5)]
There exist neighborhoods $U$  of  $W^{s}_{0}$
and  $U^{+}$ of the right side $\{z=z^{+}\}$ of $\Delta$  such that
for  every vertical strip $S$ through $\Delta$ to the right of $W^{s}_{0}$ 
one of the two following possibilities holds:
\begin{enumerate}[(i)]
\item The intersection $F(S)\cap A$ contains a vertical strip
$\Sigma$ through $\Delta$ to the right of $W^{s}_{0}$
and disjoint from $U^{+}$;
\item $F(S)\cap B$ contains a vertical strip $\Sigma$ through
$\Delta$ to the right of $W^{s}_{0}$ and disjoint from $U$.
\end{enumerate}
\end{itemize}

Note that the presentation of (H3) is slightly different from the one in \cite{BD96}.  
In our (H5), 
each non-zero vector of 
$\mathcal{C}^{uu}_\theta$ is expanded only after $\ell$ iterations. 
However, by some standard argument, one can check that the conditions
(H1)--(H5) above are also sufficient to guarantee the occurrence of the blender. 
Indeed, these conditions imply that the width of vertical strips 
in $\Delta$ 
grows  exponentially after iterations
by $F^\ell$. This implies that
 the stable manifold $W^{s}(P_\ast)$ of $P_\ast$ intersects transversally
 every vertical strip $S$ through $\Delta$ to the right of $W^{s}_{0}$, see 
 \cite[Lemma 1.8]{BD96}.
 In particular, for 
$\Lambda=\bigcap_{i \in \mathbb{Z}} F^{i}(\Delta)$
we have $W^s_{\mathrm{loc}} (\Lambda) \cap S\ne \emptyset$. Thus
$\Lambda$
is a blender whose superposition set is formed by 
the vertical segments through $\Delta$  to the right of $W^{s}_{0}$.
  
\subsection{Proof of Theorem \ref{thm.main2}} \label{ss.Proof of Theorem 1.2}
We consider the
open set of parameters $\mathcal{O}$
and the cube $\Delta$ in $\mathbb{R}^{3}$ defined as follows:
\begin{equation}\label{parameters/box for blenders}
\begin{split}
& \mathcal{O} =\left\{(\mu,\kappa,\xi)\ \vert\ 
-10<\mu<-9,\
0<\kappa<10^{-4},\
1.18<\xi<1.19 \right\},\\
& \Delta = \left\{ (x,y,z) \ \vert\ 
 |x|,|y|\leqslant 4,\ -40\leqslant z\leqslant 0\right\}.
\end{split}
\end{equation}
The boundary $\partial \Delta$ of $\Delta$ is divided into two parts
$\partial^{u} \Delta :=[-4, 4]\times \partial([-4,4]\times [-40,0])$ and 
$\partial^{ss} \Delta  :=\Delta \cap \{x=\pm 4\}$. Moreover, 
consider the subset 
$\partial^{uu} \Delta :=\Delta \cap \{y=\pm 4\}$ 
of $\partial^{u} \Delta$. 
The goal of this section is to prove the following proposition. 

\begin{proposition}\label{prop.blender}
There is $\varepsilon>0$ such that every diffeomorphism
 $F$  
sufficiently close to $G_{\mu,\kappa,\xi,\eta}$
(see \eqref{e.henonfamily} for the definition of $G_{\mu,\kappa,\xi,\eta}$) where
 $(\mu,\kappa,\xi,\eta)\in \mathcal{O}\times (-\varepsilon, \varepsilon)$ 
 has a blender.
\end{proposition}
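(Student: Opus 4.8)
The plan is to verify the geometric conditions (H1)--(H5) recalled in Section~\ref{ss.Definitions and distinctive property} for the cube $\Delta$ and parameter region $\mathcal{O}$ of \eqref{parameters/box for blenders}, since --- as explained in the paragraph preceding Section~\ref{ss.Proof of Theorem 1.2}, using \cite[\S1]{BD96} --- these conditions imply that $\Lambda=\bigcap_{i\in\mathbb{Z}}F^i(\Delta)$ is a blender. Two reductions come first. By the conjugacy $\tilde{\Theta}$ of \eqref{coordinate change2} it is enough to work with the normal form \eqref{e.new}; and since on the compact cube $\Delta$ the extra term produced by $\eta$ is $C^r$-small when $|\eta|<\varepsilon$, while (H1)--(H5) involve only finitely many iterates, cone inclusions and open geometric positions and are therefore $C^1$-robust, it suffices to check them for $G=G_{\mu,\kappa,\xi}$ with $(\mu,\kappa,\xi)\in\mathcal{O}$ and then appeal to robustness. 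Moreover, although $G$ is an endomorphism (its derivative is everywhere singular along $x$), every diffeomorphism $F$ that is $C^1$-close to it contracts weakly along $x$; the conditions not involving $F^{-1}$ will be verified for $G$ directly, while (H3)(iii) and the ``right of $W^s_0$'' conditions only use that $F^{-1}$ strongly expands the $x$-direction and preserves $\mathcal{C}^s_\theta$, a robust consequence of that degeneracy.

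For (H1) and (H2) the plan is to describe $\Delta\cap G(\Delta)$ explicitly. The image $G(\mathbb{R}^3)$ is the graph $\{y=\mu+x^2+\kappa(z-x)^2/\xi^2\}$, and over $\Delta$ the constraint $|y|\le 4$ together with $-10<\mu<-9$ confines $x^2$ to a single interval bounded away from $0$ and from $16$; hence $\Delta\cap G(\Delta)$ has exactly two connected components, one with $x>0$ and one with $x<0$, and these will be the regions $A$ (containing the fixed point, see below) and $B$. A direct computation with $z'=\xi z+y$ and the box bounds shows that $A$ is disjoint from $\partial^{ss}\Delta\cup G(\partial^u\Delta)$ --- giving (H1) --- and that $B$ is disjoint from $\{z=z^+\}$, from $\partial^{ss}\Delta$ and from $G(\partial^{uu}\Delta)$ --- giving (H2); here $\mu<-9$ and $\xi>1.18$ control the relevant $z$-extents.

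For (H3) one estimates the $2\times 2$ block $\bigl(\begin{smallmatrix}2y_i&2\kappa z_i\\ 1&\xi\end{smallmatrix}\bigr)$ of $DG$ along orbits in $\Delta$. The key observation is that any $p\in\Delta$ with $G(p)\in\Delta$ satisfies $p_y^2\ge -4-\mu-\kappa p_z^2>4$, so $|p_y|$ is uniformly bounded away from $0$ and the strong-unstable ($y$) direction is genuinely expanded at every step; together with $\xi>1.18$ and $\kappa<10^{-4}$ this yields forward invariance and eventual expansion of $\mathcal{C}^u_\theta$, forward invariance of $\mathcal{C}^{uu}_\theta$, and --- using the weak $x$-contraction of $F$ --- backward invariance and expansion of $\mathcal{C}^s_\theta$, for $\theta>1$ close to $1$ and suitable $\ell\in\mathbb{N}$. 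It then follows that $G$, hence $F$, has a unique hyperbolic fixed point $P_\ast\in A$ with one-dimensional stable manifold $W^s_0$, a nearly horizontal curve in the $x$-direction at height $(y_\ast,z_\ast)$ with $z_\ast\in(-40,0)$. Condition (H4) is then immediate, since every vertical strip to the right of $W^s_0$ lies in the region $z>z_\ast$ and hence stays away from a neighbourhood of $\{z=z^-\}=\{z=-40\}$.

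The core of the argument, and the step I expect to be the main obstacle, is (H5). Here the presence of the term $y$ in the $x$-component of \eqref{e.henonfamily} --- equivalently the coupling $z\mapsto\xi z+y$ in \eqref{e.new} --- is decisive: it is exactly the ``superposition'' mechanism underlying Definition~\ref{d.blender}. As $y$ ranges over $[-4,4]$, the quadratic fold $y\mapsto\mu+y^2+\cdots$ sweeps the image of a vertical strip $S$ twice across the full range $[-4,4]$, so that $G(S)\cap A$ and $G(S)\cap B$ each contain a strip through $\Delta$ with $x$-coordinate (equal to $y$) pinned to the correct branch. Tracking the coordinate $z'=\xi z+y$ of these two image strips relative to $z_\ast$ and to $z^+=0$, one checks that for every vertical strip $S$ to the right of $W^s_0$ at least one of the following holds: either $G(S)\cap A$ contains a vertical substrip through $\Delta$ to the right of $W^s_0$ and away from $\{z=z^+\}$ (used when $S$ is not too high), or $G(S)\cap B$ contains such a substrip away from a small neighbourhood of $W^s_0$ (used when $S$ is high); a short interval-arithmetic check shows the two ranges for the $z$-extent of $S$ overlap, so the dichotomy is exhaustive. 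The delicate part --- where I expect the real work --- is to verify that the ``sheared'' images $G(S)\cap A$ and $G(S)\cap B$ genuinely contain vertical strips in the sense of Section~\ref{ss.Definitions and distinctive property} (using monotonicity of $z\mapsto\xi z+y$ and of the parabola branches) and that their $z'$-ranges always straddle $z_\ast$ on the correct side and avoid the prescribed neighbourhoods, with all estimates uniform over $\mathcal{O}$ and over the $C^1$-neighbourhood. Once (H1)--(H5) are in place, \cite[\S1]{BD96} (through the width-growth argument of \cite[Lemma~1.8]{BD96}) gives that $\Lambda$ is a blender, which proves Proposition~\ref{prop.blender}.
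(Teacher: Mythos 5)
Your proposal is correct and takes essentially the same route as the paper: after the conjugacy reduction and the $C^1$-robustness remark for the $\eta$-term, the paper also verifies (H1)--(H5) for $G_{\mu,\kappa,\xi}$ over $\mathcal{O}$ and $\Delta$, defining $A$, $B$ as $G(\Delta^{\pm})\cap\Delta$, proving cone invariance from the $2\times2$ block of $DG$ with $|y|$ bounded away from zero, locating $z_\ast\in(-21.2,-12.6)$, and handling (H5) by exhibiting overlapping substrips (the paper's $A'$, $B'$) whose images straddle $W^s_0$ and avoid the prescribed neighbourhoods. The only difference is that the paper carries out the interval-arithmetic you describe as "the real work" explicitly with concrete boxes $A'$, $B'$.
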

This proposition immediately implies Theorem \ref{thm.main2}. 
It follows from the  next three lemmas and the sufficient conditions for blenders
in Section~\ref{ss.Definitions and distinctive property}.
So, let us start the proof of the proposition above.

\begin{lemma}\label{l.H1H2}
If a three-dimensional diffeomorphism $F$  is
sufficiently $C^1$-close to $G_{\mu,\kappa,\xi}$ with $(\mu,\kappa,\xi)\in \mathcal{O}$,
then there are compact subsets $A$ and $B$ of $\Delta$ 
 satisfying (H1) and  (H2) for $F$.
\end{lemma}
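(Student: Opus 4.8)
The plan is to verify the geometric conditions (H1) and (H2) directly for the model endomorphism $G_{\mu,\kappa,\xi}$ with $(\mu,\kappa,\xi)\in\mathcal O$, and then observe that both conditions are open in the $C^1$ topology, so they persist for every diffeomorphism $F$ that is $C^1$-close to $G_{\mu,\kappa,\xi}$. Recall that in the coordinates of \eqref{e.new} the map acts by $(x,y,z)\mapsto(y,\ \mu+y^2+\kappa z^2,\ \xi z+y)$; since $x$ is simply discarded and replaced by $y$, the first coordinate of the image is automatically in $I_x=[-4,4]$ whenever the preimage point has $|y|\le 4$, and the nontrivial part of the analysis takes place in the $(y,z)$-plane. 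First I would describe the image $G_{\mu,\kappa,\xi}(\Delta)$ explicitly: as $(x,y,z)$ ranges over $\Delta=[-4,4]^2\times[-40,0]$, the second coordinate $Y=\mu+y^2+\kappa z^2$ ranges over an interval determined by the chosen parameter window (with $\mu\in(-10,-9)$, $y^2\in[0,16]$, $\kappa z^2\in[0,\ 10^{-4}\cdot 1600]=[0,0.16)$, so $Y$ ranges roughly over $(-10,\ 6.2)$), and the third coordinate $Z=\xi z+y$ ranges over $[\xi(-40)-4,\ 0+4]=[-4\xi\cdot 10 - 4,\ 4]$, i.e. roughly $[-51.6,4]$.

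The heart of the argument is to identify the connected components of $\Delta\cap G(\Delta)$ that lie at the appropriate "heights" $Y=\pm$ something near the top and bottom of $I_y=[-4,4]$. Since $Y=\mu+y^2+\kappa z^2$ is (up to the small $\kappa z^2$ term) a parabola in $y$, the slice $\{Y=\text{const}\}$ with the constant near $4$ (the "top" face relevant to (H1)) is realized by $y$ near $\pm\sqrt{4-\mu-\kappa z^2}\approx\pm\sqrt{13}$ — but $\sqrt{13}>4$, so in fact one must check that $Y$ can or cannot reach $4$ within $|y|\le 4$; with $\mu<-9$ and $y^2\le 16$ we get $Y\le \mu+16+0.16<7.16$ and $Y$ can indeed equal any value up to about $6.2$, so the level $Y=4$ is attained, and the set $G(\Delta)\cap\Delta$ has two "branches" coming from $y>0$ and $y<0$. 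I would take $A$ to be (the closure of) the branch of $\Delta\cap G(\Delta)$ coming from one sign of $y$, chosen so that its $Y$-coordinate stays bounded away from $\pm 4$ (this is where the numerics $-10<\mu<-9$ matter: it forces $Y=\mu+y^2+\kappa z^2$ to avoid a neighborhood of $y=\pm 4$, hence $A$ is disjoint from $\partial^{uu}\Delta$), and disjoint from $\partial^{ss}\Delta=\{x=\pm4\}$ (automatic since the $x$-coordinate of the image equals $y\in[-4,4]$ and in fact stays in the interior) and from $G(\partial^u\Delta)$. For (H2) I would take $B$ to be the other branch, arranged so that its $Z=\xi z+y$ coordinate stays bounded away from $z^+=0$, i.e. disjoint from $\{z=0\}$ — this uses $\xi>1.18$ together with $z\le 0$ and $|y|\le 4$: one checks that on the relevant branch $z$ is sufficiently negative that $\xi z + y<0$ with room to spare — and disjoint from $\partial^{ss}\Delta$ and from $G(\partial^{uu}\Delta)$.

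The main obstacle I expect is the bookkeeping of the disjointness conditions with $G(\partial^u\Delta)$ and $G(\partial^{uu}\Delta)$: one has to check that the images of the relevant boundary faces under $G$ do not cut through the chosen components $A$ and $B$, which requires tracking where the faces $\{y=\pm4\}$, $\{z=-40\}$, $\{z=0\}$, $\{x=\pm4\}$ go and confirming that they land outside (or on the boundary of) $A$ and $B$ respectively. Because $G$ collapses the $x$-direction, the images of the two faces $\{x=\pm4\}$ coincide with the image of an interior slice, which is why $\partial^{ss}$-disjointness is easy; the faces $\{y=\pm4\}$ map to the parabola-vertex locus $Y$ near its maximum, well away from the $Y$-values attained on $A$ and $B$ if the parameters are chosen as above; and the faces $\{z=0\}$ and $\{z=-40\}$ map to affine surfaces in $Z$ that bound the $Z$-range, which is exactly what one exploits for (H2). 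All of these are finitely many explicit polynomial inequalities in the parameters over the compact window $\mathcal O$, so they hold with a definite margin, and therefore survive a small $C^1$ (indeed $C^0$) perturbation; choosing $\varepsilon$ small likewise lets the $\eta x y$ term be absorbed since $|xy|\le16$ and the term can be made as small as we like. I would close by remarking that the fixed point $P_\ast$ promised in the discussion after (H3) will be produced once the cone conditions are verified in the next lemma, so no further work on (H1)–(H2) is needed here.
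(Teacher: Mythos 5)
Your overall plan matches the paper's proof: take $A$ and $B$ to be $G_{\mu,\kappa,\xi}(\Delta^+)\cap\Delta$ and $G_{\mu,\kappa,\xi}(\Delta^-)\cap\Delta$ where $\Delta^{\pm}=\Delta\cap\{\pm y>0\}$, exploit the parabolic second coordinate, the collapse of the $x$-direction, and the sign of $\xi z+y$, and then invoke $C^1$ (in fact $C^0$) stability of the resulting finitely many strict inequalities. The bookkeeping, however, has a few slips that need correcting before the argument is airtight. (i) $\sqrt{13}\approx 3.61<4$, not $>4$: the level $\{Y=4\}$ is attained at $|y|\approx 3.6$, inside $[-4,4]$, which is consistent with the paper's explicit bound $2.4<|y|<3.8$ for the branches of $G_{\mu,\kappa,\xi}(\Delta)\cap\Delta$; your first inequality contradicts the (correct) conclusion you reach a sentence later. (ii) Condition (H1) asks $A$ to be disjoint from $\partial^{ss}\Delta\cup F(\partial^{u}\Delta)$, not from $\partial^{uu}\Delta$; the clause ``hence $A$ is disjoint from $\partial^{uu}\Delta$'' is therefore beside the point (and is in fact false, since $G_{\mu,\kappa,\xi}(\Delta^{+})\cap\Delta$ does meet $\{y=\pm4\}$). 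What is needed, and what you do recover in your last paragraph, is that $G_{\mu,\kappa,\xi}(\{y=\pm4\})$, whose second coordinate is $\mu+16+\kappa z^{2}>6$, lies entirely outside $\Delta$. (iii) For the (H2) disjointness from $\{z=0\}$ the phrase ``$z$ is sufficiently negative'' is misplaced: the preimage $z$ ranges over all of $[-40,0]$ on $\Delta^{-}$; what matters is that the preimage $y$ is negative and bounded away from $0$ (the paper shows $y<-2.4$), whence $\xi z+y\le y<-2.4<0$. With these corrections your verification coincides with the paper's, which carries out the same case analysis via the two projections $\Pi_{z}$ and $\Pi_{x}$.
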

\begin{proof}
First, we investigate these properties for
$G_{\mu, \kappa, \xi}$ with $(\mu, \kappa, \xi) \in \mathcal{O}$.

Let
$\Pi_{z}: \mathbb{R}^{3}\to \mathbb{R}^{2}$
and
$\Pi_{x}: \mathbb{R}^{3}\to \mathbb{R}^{2}$
be the projections 
$\Pi_{z}(x,y,z)=(x,y)$ and $\Pi_{x}(x,y,z)=(y,z)$.
We first observe the relation between $\Pi_{z}(\Delta)$ and $G_{\mu,\kappa,\xi}(\Delta)$.
For $z\in [-40, 0]$,
write 
$$D_{z}=[-4,4]^{2}\times \{z\}, \quad 
\partial^{uu}D_{z}=[-4,4]\times\{-4, 4\}\times \{z\}.
$$
Note that,
$G_{\mu,\kappa,\xi}$ is an endomorphism such that 
$DG_{\mu,\kappa,\xi}$ has a zero eigenvalue 
whose eigenspace is the $x$-axis.
Thus, for every $z\in [-40, 0]$, from \eqref{e.new} we have
$$
\Pi_{z}( G_{\mu,\kappa,\xi}(D_{z}))=
\left\{(x,y)\ \vert\ y=\mu+x^{2} +\kappa z^{2},\ |x|\leqslant 4 \right\}.
$$

Consider the sets
$$
D_{z}^{+}:=D_{z}\cap \{y>0\}, \quad
\mbox{and}
\quad
D_{z}^{-}:=D_{z}\cap \{y<0\}.
$$

Since $-10<\mu<-9$ and $0< \kappa z^{2}<0.16$,  
by direct  calculations, 
one can obtain  the following conditions, see Figure \ref{fig2}-(a):

\begin{itemize}
\item
$\Pi_{z} ( G_{\mu,\kappa,\xi}(D_{z}^{+}))\cap \Pi_{z} (\Delta)$
contains a segment $I^{+}_{z}$ transverse
in the $xy$-plane
 to $\Pi_{z}(\partial^{uu}\Delta)$
and
such that
 $I^{+}_{z}\subset  \{(x,y) \ \vert \ 2.4<x<3.8\}$.
\item
$\Pi_{z} ( G_{\mu,\kappa,\xi}(D_{z}^{-}))\cap \Pi_{z} (\Delta)$
contains a segment $I^{-}_{z}$ transverse
in the $xy$-plane
 to $\Pi_{z}(\partial^{uu}\Delta)$
and
such that
 $I^{-}_{z}\subset  \{(x,y) \ \vert \ -3.8<x<-2.4\}$.
\item
We have $\Pi_{z}(G_{\mu,\kappa,\xi}( \partial^{uu} D_{z})) =\{ (\pm 4,\mu+16+\kappa z^{2} )\}$. 
By the conditions on the constants,
its $y$-coordinate $\mu+16+\kappa z^{2}$ is greater than $4$.
That is, the projection
$\Pi_{z} ( G_{\mu,\kappa,\xi}( \partial^{uu} D_{z}))$
is outside $\Pi_z(\Delta)$.
\end{itemize}

\begin{figure}[hbt]
\scalebox{0.8}{\includegraphics[clip]{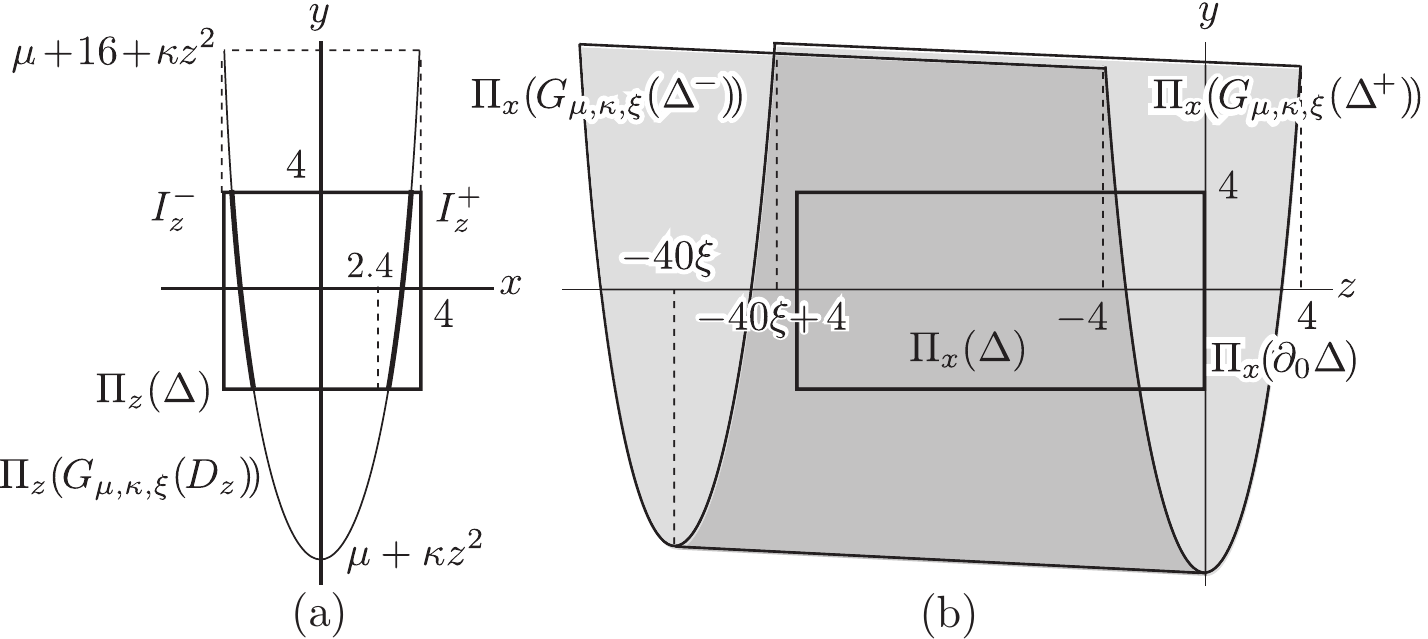}}
\caption{Projected images of 
 $\Delta$ and  $ G_{\mu,\kappa,\xi}(\Delta)$}
\label{fig2}
\end{figure}

Consider the sets
$$
\Delta^{+}:=\Delta\cap \{y>0\}, \quad
\Delta^{-}:=\Delta\cap \{y<0\},
\quad
\mbox{and}
\quad  
\partial_{0}\Delta:=\Delta\cap\{z=0\}.
$$
The following properties of
$\Pi_{x}(\Delta)$ and $\Pi_x(G_{\mu,\kappa,\xi}(\Delta))$
can be checked by direct calculations, see Figure \ref{fig2}-(b).

\begin{itemize}
\item $\Pi_{x}(G_{\mu,\kappa,\xi}(\Delta^{+}))$ contains
$\Pi_{x}(\Delta)=\left\{(z,y)\ \vert\  -40\leqslant z\leqslant 0,  |y|\leqslant 4\right \}$ 
in its interior.
\item
$\Pi_{x}(G_{\mu,\kappa,\xi}(\Delta^{-})) \cap \Pi_{x}(\Delta)$ contains
the set $\Pi_{x}(\Delta)\cap \{ -40< z< -4 \}$, and
\item
$\Pi_{x}(G_{\mu,\kappa,\xi}(\Delta^{\pm}))\cap
\Pi_{x}(\partial_{0}\Delta)=\emptyset$.
\end{itemize}
Note that 
$$
\Delta^{\pm}= \bigcup_{z\in [-40,0]}D_{z}^{\pm}.
$$ 
Consider now the sets  given by
$$
A= G_{\mu,\kappa,\xi}(\Delta^{+})\cap \Delta
\quad \mbox{and} \quad
B= G_{\mu,\kappa,\xi}(\Delta^{-})\cap \Delta.
$$
By the comments above,
$$
\Pi_{z}(A)=\bigcup_{z\in [-40, 0]}I^{+}_{z}
\quad
\mbox{and}
\quad
\Pi_{z}(B)= \bigcup_{z\in [-40, 0]}I^{-}_{z},
$$ 
and
$
\Pi_{x}(A) = \Pi_{x}(\Delta)
$
and
$
  \Pi_{x}(B) \cap \Pi_{x}(\partial_{0}\Delta)=\emptyset.
$
Moreover, from
the above observations, we have
\begin{itemize}
\item $A\cap  \partial^{ss}\Delta=\emptyset$ and  $A \cap  G_{\mu,\kappa,\xi}(\partial^{u}\Delta)=\emptyset$;
\item  $B\cap  \partial^{ss}\Delta=\emptyset$, $B\cap  \partial_{0}\Delta=\emptyset$,
and $B\cap  G_{\mu,\kappa,\xi}( \partial^{uu} D_{z})=\emptyset$.
\end{itemize}
This implies that $A$ and $B$
satisfy  (H1) and (H2).

Clearly,
any diffeomorphism $F$  
sufficiently $C^{1}$ close to $G_{\mu,\kappa,\xi}$ 
also satisfies these properties.
\end{proof}

\begin{lemma}\label{l.H3}
For any diffeomorphism $F$  sufficiently $C^{1}$-close to $G_{\mu,\kappa,\xi}$,
 $(\mu,\kappa,\xi)\in \mathcal{O}$,
 the cone fields
$\mathcal{C}_{2}^{u}$, $\mathcal{C}_{2}^{uu}$ and
$\mathcal{C}_{2}^{s}$
satisfy (H3) for $A, B$ as in Lemma~\ref{l.H1H2}.
\end{lemma}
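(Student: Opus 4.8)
The plan is to verify the cone conditions (H3)(i)--(iii) first for the endomorphism $G=G_{\mu,\kappa,\xi}$ with $(\mu,\kappa,\xi)\in\mathcal O$, taking $\theta=2$ and $\ell=1$, and then to pass to every diffeomorphism $F$ that is $C^1$-close to $G$ by an openness argument: all of the inequalities produced are strict, and they are imposed only on the fixed compact cube $\Delta$, respectively on $F^{-1}(A\cup B)\cap\Delta$, which stays in a fixed compact region as $F$ varies. The one piece of geometry I will borrow from the proof of Lemma~\ref{l.H1H2} is that $\Pi_z(A)\subset\{(x,y):2.4<x<3.8\}$ and $\Pi_z(B)\subset\{(x,y):-3.8<x<-2.4\}$, while the first coordinate of $G(x,y,z)$ equals $y$; consequently every $q\in\Delta$ with $G(q)\in A\cup B$ satisfies $2.4<|y_q|<3.8$, and the same (slightly relaxed) bound holds for $F$ near $G$ at every point of $F^{-1}(A\cup B)\cap\Delta$ and at every $F^{-1}(p)$ with $p\in A\cup B$. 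This is exactly what the estimates require, since
\[
DG_{(x,y,z)}=\begin{pmatrix}0&1&0\\0&2y&2\kappa z\\0&1&\xi\end{pmatrix},
\]
so $\ker DG$ is the $x$-axis (the axis of the cone $\mathcal C^{s}_2$), the coefficient $2y$ is bounded away from $0$ on the relevant locus, and on $\Delta$ one has $|2\kappa z|\le 8\cdot10^{-3}$ and $\xi\in(1.18,1.19)$, so the $\kappa z$-terms enter as harmless perturbations everywhere below.

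For (H3)(i) and (ii) I will compute $DG_p\mathbf v=(v,\,2yv+2\kappa zw,\,v+\xi w)$ for $\mathbf v=(u,v,w)$ and check the inclusions directly. If $\mathbf v\in\mathcal C^{u}_2(p)$ (the case $v=0$ being immediate), the inclusion $DG_p\mathbf v\in\operatorname{int}\mathcal C^{u}_2(G(p))$ means $2|v|<\sqrt{(2yv+2\kappa zw)^2+(v+\xi w)^2}$, and the right-hand side is at least the distance from the origin to the line $t\mapsto(2yv+2\kappa zt,v+\xi t)$, namely $|v|\,|2y\xi-2\kappa z|/\sqrt{4\kappa^2z^2+\xi^2}$, whose quotient factor exceeds $2$ once $|y|>2.4$. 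For the expansion the worst vectors have $|u|=\tfrac12\sqrt{v^2+w^2}$, so that $|\mathbf v|^2=\tfrac54(v^2+w^2)$ while $|DG_p\mathbf v|^2\ge\lambda_{\min}(M^\top M)(v^2+w^2)$, where $M$ is the $3\times2$ matrix made of the last two columns of $DG$; a direct computation of $\lambda_{\min}(M^\top M)$ for $|y|\in(2.4,3.8)$ and $\xi\in(1.18,1.19)$ gives $\tfrac45\lambda_{\min}(M^\top M)>1$, so $\ell=1$ and some $c>1$ suffice (if this constant is uncomfortably close to $1$, one may instead take $\ell>1$ and additionally control the $y$-coordinates of the first few forward iterates inside $\Delta$). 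If $\mathbf v\in\mathcal C^{uu}_2(p)$ then $|w|\le\tfrac12|v|$, whence $2\sqrt{v^2+(v+\xi w)^2}\le 2|v|\sqrt{1+(1+\xi/2)^2}<2|y||v|-|2\kappa zw|\le|2yv+2\kappa zw|$, which is the inclusion $DG_p\mathbf v\in\operatorname{int}\mathcal C^{uu}_2(G(p))$.

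The step I expect to be the main obstacle is (H3)(iii), since it is phrased with $DF^{-1}$, which is meaningless for the endomorphism $G$; I will handle it by a backward computation on $DG$ followed by perturbation. At a point $q$ with $G(q)\in A\cup B$ (so $2.4<|y_q|<3.8$), the only nonzero $\mathbf w=(u,v,w)$ with $DG_q\mathbf w\in\mathcal C^{s}_2(G(q))$ lie on $\ker DG_q=\{v=w=0\}$: for $v\ne0$ membership would force both $2|1+\xi w/v|\le1$ and $2|2y+2\kappa z\,w/v|\le1$, incompatible when $|y|>2.4$, while for $v=0$ we get $DG_q\mathbf w=(0,2\kappa zw,\xi w)$, which lies in $\mathcal C^{s}_2$ only for $w=0$. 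Hence, for $F$ that is $C^1$-close to $G$, the set $\{\mathbf w:DF_q\mathbf w\in\mathcal C^{s}_2(F(q))\}$ is a thin cone about the $x$-axis, contained in $\operatorname{int}\mathcal C^{s}_2(q)$, and $DF_q$ contracts each such $\mathbf w$ by a factor $>1$ (these vectors are nearly annihilated by the $C^1$-close linear part $DG_q$, whose small eigenvalue is $O(\det DF_q)=O(\|F-G\|_{C^1})$); equivalently $DF^{-1}$ sends $\mathcal C^{s}_2$ into its own interior and expands it, which is (iii). Finally, since (i), (ii) and (iii) for $G$ were all obtained through strict inequalities on the compact cube $\Delta$ (and on $F^{-1}(A\cup B)\cap\Delta$, which stays in a fixed compact set), they persist for every diffeomorphism $F$ sufficiently $C^1$-close to $G$, uniformly in $(\mu,\kappa,\xi)\in\mathcal O$, which is the assertion of the lemma.
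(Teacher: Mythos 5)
Your proposal is correct and follows the same overall structure as the paper (verify the cone conditions for the endomorphism $G_{\mu,\kappa,\xi}$ on the compact cube $\Delta$ using the key fact that the $x$-coordinate of $G(p)$ equals the $y$-coordinate of $p$, hence $2.4<|y_p|<3.8$ at the relevant points; then invoke $C^1$-openness of the strict inequalities). The technique you use for the estimates, however, is genuinely different in two places, and in each case it buys something.

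For (H3)(i) the paper splits into the two cases $6.5|v|\ge|w|$ and $6.5|v|<|w|$, works in the adapted norm $|\cdot|_*=\max\{|u|,\sqrt{v^2+w^2}\}$, and only concludes $|DG_p\mathbf v|_*>c_0|\mathbf v|_*$, so it then has to pass to a possibly large $\ell$ to absorb the norm-equivalence constants. Your distance-to-line bound for the cone inclusion, together with the singular-value lower bound $|DG_p\mathbf v|^2\ge\lambda_{\min}(M^\top M)(v^2+w^2)\ge\tfrac45\lambda_{\min}(M^\top M)|\mathbf v|^2$, gives the inclusion and the expansion in the standard norm in one stroke, with $\ell=1$. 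This is cleaner, though as you yourself note it is numerically tight: at the corner $y=2.4$, $\xi=1.18$, $\kappa z\approx0$ one gets $\lambda_{\min}\approx1.28$, only slightly above $5/4$, so $c$ is barely above $1$. Your hedge (take $\ell>1$ if needed) is sound because $F^i(p)\in\Delta$ for the intermediate iterates forces $F^i(p)\in A\cup B$, i.e.\ $F^{i+1}(p)\in A\cup B$, which is exactly the locus where the estimate applies; this is in fact what makes the paper's $\ell>1$ argument close up as well.

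For (H3)(iii) the paper merely records that the existence of an expanding invariant stable cone follows from $DG$ being an endomorphism with kernel the $x$-axis. You make that remark precise by computing that $\{\mathbf w\ne0:DG_q\mathbf w\in\mathcal C^s_2(G(q))\}=\ker DG_q$ on the relevant locus, and then deducing that for $F$ $C^1$-close to $G$ the preimage cone $DF_q^{-1}\mathcal C^s_2(F(q))$ is a thin cone around the $x$-axis contained in $\operatorname{int}\mathcal C^s_2(q)$, on which $DF_q$ contracts strongly. This is a correct upper-semicontinuity argument and fills in exactly what the paper leaves to the reader; the only blemish is the phrase ``small eigenvalue,'' which should read ``small singular value'' since the bound you want is $\sigma_{\min}(DF_q)=O(|\det DF_q|)$ given that the other two singular values stay bounded below. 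Part (ii) is essentially the same computation as in the paper. In short: same skeleton, genuinely different and somewhat tighter estimates for (i), and a more explicit justification for (iii).
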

\begin{proof}
First, note that for $p=(x,y,z)\in \Delta$ with $G_{\mu,\kappa,\xi}(p)\in A\cup B$
and $\mathbf{v}=(u,v,w) \in T_p\Delta$,
we have 
\begin{equation}\label{eq.derivative}
(u_{1},v_{1},w_{1}):=(DG_{\mu,\kappa,\xi})_{p} \mathbf{v}=(v, 2yv+2\kappa z w, v+\xi w).
\end{equation}
For the proof of (H3)-(i) and (ii), we just investigate 
the property of $G_{\mu, \kappa, \xi}$ with 
$(\mu, \kappa, \xi) \in \mathcal{O}$, which implies 
that the same conditions hold for $F$ near $G_{\mu, \kappa, \xi}$. 

\smallskip

\noindent{{\bf{Proof of (H3)-(i).}}}
Consider the cone
$$
\mathcal{C}^{u}(p):=\mathcal{C}_{2}^{u}(p)=\left\{
(u,v,w)\in T_{p}\mathbb{R}^{3}\
\vert\ 
 2 |u|\leqslant \sqrt{v^{2}+w^{2}}
\right\}.
$$
We consider the norm
$$
|(u,v,w)|_{*}:=
\max \left\{|u|, \sqrt{v^{2}+w^{2}}
\right\}.
$$
We will see that 
$|(DG_{\mu,\kappa,\xi})_{p}\mathbf{v}|_\ast> 
|\mathbf{v}|_\ast$ for every $\mathbf{v}\in \mathcal{C}^{uu}(p)\setminus \{\bf 0\}$. 
By compactness this implies that $|(DG_{\mu,\kappa,\xi})_{p}\mathbf{v}|_\ast> 
c_{0} |\mathbf{v}|_\ast$ for some uniform $c_{0}>1$. Since $|\cdot|_\ast$ is
equivalent to $|\cdot|$, this implies that there are $\ell$ and $c>0$ such that
$|(DG^\ell_{\mu,\kappa,\xi})_{p}\mathbf{v}|> 
c |\mathbf{v}|$. We now go to the details of the proof.

First, note that from the proof of Lemma \ref{l.H1H2},
the condition $G_{\mu,\kappa,\xi}(p)\in A\cup B$ implies $|y|>2.4$.
(remember that the $x$-coordinate of $G_{\mu,\kappa,\xi}(p)$ 
is equal to the $y$-coordinate of $p$).

We divide the proof into two cases:
$6.5|v|\geqslant |w|$ and $6.5|v|< |w|$.
In the case of $6.5|v|\geqslant |w|$, 
by this inequality and the choice of parameters in (\ref{parameters/box for blenders}),
we have 
$$
|v_{1}|=| 2yv+2\kappa z w |
\geqslant
2|y| |v|-2|\kappa| |z| |w|
>
4.8|v|-0.052|v|>4.7 |v|.
$$
Thus, 
\begin{equation}\label{e.newnew}
\sqrt{v_{1}^{2}+w_{1}^{2} }\geq |v_{1}|\geqslant 4.7|v|=4.7 |u_{1}|.
\end{equation}
Therefore, $(u_{1},v_{1},w_{1})\in \mathcal{C}^{u}(G_{\mu,\kappa,\xi}(p))$.

To get the uniform expansion of the vectors, note that
$$
w_{1}^{2}=(v+\xi w)^{2}\geqslant v^{2}-2\xi|v||w|+\xi^{2} w^{2}
\geqslant (1-13\xi) v^{2}+\xi^{2} w^{2},
$$ 
thus, together with \eqref{e.newnew}, one obtains 
$$
|(DG_{\mu,\kappa,\xi})_{p}\mathbf{v}|_{*}^{2}=
v_{1}^{2}+w_{1}^{2} \geqslant (22-13\xi) v^{2}+\xi^{2} w^{2}>4 v^{2}+1.18^{2} w^{2}
>|\mathbf{v}|_{*}^{2}.
$$

Next, we consider the case of $6.5|v|<|w|$.  Since
$$
|w_{1}|=| v+\xi w|\geqslant \xi |w|-|v|\geqslant 6.5\xi |v|-|v|>5|v|,
$$ 
one has
$$
\sqrt{v_{1}^{2}+w_{1}^{2} } \geqslant |w_{1}| >  5 |v|=5 |u_{1}|.
$$
This implies $(u_{1},v_{1},w_{1})\in \mathcal{C}^{u}(G_{\mu,\kappa,\xi}(p))$.
Moreover, by  (\ref{parameters/box for blenders}), (\ref{eq.derivative}) and  $2.4<| y|<3.8$,
\begin{eqnarray*}
&& v_{1}^{2}=( 2yv+2\kappa z w )^{2}\geqslant 4y^{2}v^{2}-8|yv\kappa z w|+4\kappa^{2} z^{2} w^{2}
>23  v^{2}-0.02 w^{2},\\
&& w_{1}^{2}=(v+\xi w)^{2}\geqslant \xi^{2} w^{2}-2\xi|v||w|+v^{2}
>1.026w^{2}+v^{2}.
\end{eqnarray*}
Therefore, 
$$|(DG_{\mu,\kappa,\xi})_{p}\mathbf{v}|_{*}^{2}=
v_{1}^{2}+w_{1}^{2}
> 24 v^{2}+1.006 w^{2}
> |\mathbf{v}|_{*}^{2}.$$
This completes the proof of (H3)-(i).
\smallskip

\noindent{{\bf{Proof of (H3)-(ii).}}}
For $p\in \Delta$ with $G_{\mu,\kappa,\xi}(p)\in A\cup B$,  take a cone
$$\mathcal{C}^{uu}(p):=\mathcal{C}_{2}^{uu}(p)=\left\{
(u,v,w)\in T_{p}\mathbb{R}^{3}\ \vert\ 2 \sqrt{u^{2}+w^{2}} \leqslant |v|
\right\}\subset  \mathcal{C}^{u}(p).
$$
This implies that 
$|w|\leq |v|/2$
for any $(u,v,w)\in \mathcal{C}^{uu}(p)$.
Recalling
(\ref{eq.derivative}) and  $2.4<| y|$, 
 we have
$$
|v_{1}|=|2yv+2\kappa zw|\geqslant 2|y||v|-2|\kappa||z||w|>4.7 |v|.
$$
On the other hand,
$$
u_{1}^{2}+w^{2}_{1}=v^{2}+(v+\xi w)^{2}\leqslant2v^{2}+2\xi|v||w|+\xi^{2}w^{2}\leqslant
2v^{2}+1.19 v^{2}+0.63 v^{2} < 4 v^{2}.
$$
Hence, 
$$
2 \sqrt{u_{1}^{2}+w^{2}_{1}}<4 |v|< |v_{1}|.
$$
Thus  $(DG_{\mu,\kappa,\xi})_{p} \mathbf{v}\in \mathcal{C}^{uu}(G_{\mu,\kappa,\xi}(p))$,
which completes the proof of (H3)-(ii).

\smallskip

\noindent{{\bf{Proof of (H3)-(iii).}}}
The existence of a contracting and invariant
strong stable cone field
follows from the fact that
$DG_{\mu,\kappa,\xi}$ is an endomorphism whose 
eigenspace associated the eigenvalue $0$ is spanned by $(1,0,0)$.
This implies that for every diffeomorphism $F$ sufficiently close to $G_{\mu,\kappa,\xi}$ with $(\mu,\kappa,\xi)\in \mathcal{O}$,
the cone field $\mathcal{C}_{2}^{s}(p)$ with $p\in  A\cup B$
satisfies (H3)-(iii).
\end{proof}

\begin{lemma} \label{l.H4H5}
Every diffeomorphism $F$ 
sufficiently $C^1$-close to $G_{\mu,\kappa,\xi}$, with $(\mu,\kappa,\xi)\in \mathcal{O}$,
satisfies conditions  (H4)  and (H5) for $A$, $B$ as in Lemma~\ref{l.H1H2}.
\end{lemma}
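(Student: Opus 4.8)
The plan is to verify the two remaining geometric conditions (H4) and (H5) directly for the endomorphism $G_{\mu,\kappa,\xi}$ with $(\mu,\kappa,\xi)\in\mathcal{O}$, and then observe that both conditions are $C^1$-open and hence persist for nearby diffeomorphisms $F$. The starting point is the location of the hyperbolic fixed point $P_\ast$ guaranteed by (H1)+(H3): from \eqref{e.new}, a fixed point satisfies $x=y$, $\xi z+y=z$ (so $z=-y/(\xi-1)$), and $y=\mu+y^2+\kappa z^2$; with the parameter ranges in \eqref{parameters/box for blenders} one solves this to pin down $P_\ast$ inside $A$, in particular to locate the $z$-coordinate of $P_\ast$ well inside $(-40,0)$ and its $x=y$-coordinate inside the interval $(2.4,3.8)$ coming from Lemma~\ref{l.H1H2}. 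From this I get an explicit description (up to the slopes controlled by $\mathcal C^s_2$) of the stable curve $W^s_0$ through $P_\ast$: it is a nearly-horizontal curve in the $x$-direction, cutting $\Delta$ into a ``left'' region near $\{z=z^-\}=\{z=-40\}$ and a ``right'' region.

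For (H4) the key point is that any vertical strip $S$ to the right of $W^s_0$ has all its vertical curves in the homotopy class forcing them to stay on the $z>$(z-coordinate of $W^s_0$) side; since $W^s_0$ sits well above $z=-40$, there is a fixed neighbourhood $U^-$ of $\{z=-40\}$ that no such strip can meet. For (H5) I would track the images of vertical strips: a vertical strip $S$ has tangent vectors in $\mathcal C^u_2$ and, projecting via $\Pi_x$, its curves run roughly in the $y$-direction across $\Pi_x(\Delta)$; applying $G_{\mu,\kappa,\xi}$ and using the computations already performed in Lemma~\ref{l.H1H2} (the $\Pi_x$-images of $G(\Delta^+)$ and $G(\Delta^-)$), the image $G(S)$ meets $\Delta$ in a way governed by which of $\Delta^+$ or $\Delta^-$ the strip predominantly enters. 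When $G(S)\cap A$ contains a through strip I check, using that $\Pi_z(A)\subset\{2.4<x<3.8\}$ lies to the right of $W^s_0$ and that $A$ is disjoint from $\partial_0\Delta=\{z=0\}$, that the resulting strip $\Sigma$ is to the right of $W^s_0$ and avoids a fixed neighbourhood $U^+$ of $\{z=0\}$; the alternative $G(S)\cap B$ case is analogous, with the output strip avoiding a neighbourhood $U$ of $W^s_0$ because $\Pi_x(B)$ is bounded away from $z=0$ and $B$ is likewise positioned to the right of $W^s_0$. The cone conditions (H3)-(i),(ii) already proved guarantee that $G(S)$ is again a union of vertical curves, so $\Sigma$ is genuinely a vertical strip of positive width.

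The main obstacle is bookkeeping the ``to the right of $W^s_0$'' condition through the image operation: one must confirm that the two candidate output strips $\Sigma\subset G(S)\cap A$ and $\Sigma\subset G(S)\cap B$ land on the correct side of the stable curve and that at least one of the two alternatives always occurs (i.e.\ that the image of a through-strip genuinely crosses $\Delta$ in a through fashion inside $A\cup B$, not escaping through $\partial^{ss}\Delta$ or $\partial_0\Delta$). This is exactly the ``superposition'' mechanism: because of the $+y$ term in the $z$-component of \eqref{e.new}, the two pieces $G(\Delta^+)$ and $G(\Delta^-)$ together cover $\Pi_x(\Delta)$ (as recorded in Lemma~\ref{l.H1H2}), so a through-strip is always captured by $A$ or $B$. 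Once this is established for $G_{\mu,\kappa,\xi}$, all quantities involved (positions of $A$, $B$, $P_\ast$, $W^s_0$, the widths of the relevant gaps $U^-$, $U$, $U^+$) are robust, so the same holds for every $F$ sufficiently $C^1$-close, and also for every $F$ close to $G_{\mu,\kappa,\xi,\eta}$ with small $\eta$, since the $\eta x y$ term is a small $C^1$-perturbation of $G_{\mu,\kappa,\xi}$; this completes the proof of Proposition~\ref{prop.blender} and hence of Theorem~\ref{thm.main2}.
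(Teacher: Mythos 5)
Your treatment of (H4) is essentially the paper's: locate the fixed point $P_\ast$ in $A$, compute $z_\ast=-y_\ast/(\xi-1)\in(-21.2,-12.6)$ from the parameter ranges, note that $W^s_0=\{(x,y_\ast,z_\ast):|x|\le4\}$ (it is in fact collapsed to $P_\ast$ by one application of $G$), and use the slope $1/2$ of the $\mathcal C^{uu}_2$ cone to conclude vertical curves to the right of $W^s_0$ cannot reach a neighbourhood of $\{z=-40\}$. This is correct and matches the paper.

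For (H5) there are two substantive problems. First, the dichotomy you need --- that every vertical curve $L$ to the right of $W^s_0$ admits a vertical sub-arc whose $G$-image is a through vertical curve to the right of $W^s_0$ avoiding $U^{+}$, or one whose image avoids $U$ --- is not supplied by the remark that ``$G(\Delta^{+})$ and $G(\Delta^{-})$ together cover $\Pi_x(\Delta)$, so a through-strip is always captured by $A$ or $B$.'' That covering statement concerns the images, not the position of $L$ in the domain, and it does not isolate which sub-arc of $L$ gives rise to a through strip nor guarantee its position relative to $W^s_0$, $U$, $U^+$. The paper resolves this by introducing explicit domain boxes
$A'=\{|x|\le4,\ 2.4\le y\le3.8,\ -22\le z\le-3.3\}\subset\Delta^{+}$ and
$B'=\{|x|\le4,\ -3.8\le y\le-2.4,\ -7.3\le z\le0\}\subset\Delta^{-}$,
chosen so that, using the cone slope and the location of $z_\ast$, every vertical curve to the right of $W^s_0$ crosses $A'$ or $B'$ vertically, and then one reads off the required properties of $G(L\cap A')$ or $G(L\cap B')$. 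This domain-side dichotomy is the core of (H5), and your proposal does not construct it.

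Second, two of the facts you invoke in the image check are reversed or misread. It is $B$, not $A$, that is disjoint from $\partial_0\Delta=\Delta\cap\{z=0\}$: the paper records $\Pi_x(A)=\Pi_x(\Delta)$, so $A$ actually reaches $z=0$, whereas $\Pi_x(B)\cap\Pi_x(\partial_0\Delta)=\emptyset$. And the assertion that ``$\Pi_z(A)\subset\{2.4<x<3.8\}$ lies to the right of $W^s_0$'' conflates the $x$-coordinate range of $A$ with the left/right notion, which is a condition on the $z$-coordinate relative to $z_\ast$ (the right side is the one containing $\{z=z^{+}=0\}$). With these confusions, the reason your $\Sigma\subset G(S)\cap A$ avoids $U^{+}$ is not what you cite: it is because the domain sub-arc lies in $A'$, where $z\le -3.3$ and $y\le 3.8$, so the image $z$-coordinate $\xi z+y$ is bounded above by $\xi(-3.3)+3.8<0$ --- not because $A$ misses $\{z=0\}$ (it does not).
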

\begin{proof}
Again, we mainly consider 
the properties for $G_{\mu, \kappa, \xi}$ with 
$(\mu, \kappa, \xi) \in \mathcal{O}$. The conclusion for $F$ near 
$G_{\mu, \kappa, \xi}$ follows almost immediately. 

\noindent{\bf{Proof of (H4).}}
By construction, $A$ contains
unique saddle fixed point $P_{\ast}=(x_{\ast}, y_{\ast}, z_{\ast})$ 
which satisfies
$$
x_{\ast}=y_{\ast}=\mu+{y_{\ast}}^{2}+\kappa {z_{\ast}}^{2}=(1-\xi)z_{\ast}.
$$
Note that 
$W_{0}^{s}=\{(x, y_{\ast},z_{\ast})\ \vert\  |x|\leq 4\}$ 
and that every point in $W_0^s$ 
is  mapped to $P_{\ast}$ by  $G_{\mu,\kappa,\xi}$ by $G_{\mu, \kappa, \xi}$.
Therefore $W_{0}^{s}$ is a local stable manifold of $P_{\ast}$.
Since
$2.4<y_{\ast}<3.8$
and $1.18<\xi<1.19$,
one has the following estimation:
\begin{equation} \label{z-coordinate of P_{ast}}
-21.2<z_{\ast}=-\frac{y_{\ast}}{\xi-1}<-12.6.
\end{equation}
Note that the cones
$\mathcal{C}^{uu}(p)$ are defined
around the $y$-axis with slope $1/2$.
Thus a simple calculation gives that  any  vertical curve $L$, (i.e., 
a curve with $T_{p}L\subset \mathcal{C}^{uu}(p)$), through $\Delta$ to the right  $W^{s}_{0}$
does not intersect  a small neighborhood $U^{-}$ of  $\{z=-40\}\cap \Delta$.
This implies that (H4) holds for $G_{\mu,\kappa,\xi}$ and thus
for every diffeomorphism $F$ sufficiently $C^1$-close to $G_{\mu,\kappa,\xi}$.

\smallskip

\noindent{\bf{Proof of (H5).}}
Consider the subsets of $\Delta$ defined by 
\[
\begin{split}
A^{\prime}&=\left\{ (x,y,z)\ \vert\  |x|\leqslant 4,\  
2.4\leqslant y \leqslant 3.8,\    
-22 \leqslant z\leqslant  -3.3 \right\},\\
B^{\prime}&=\left\{ (x,y,z)\ \vert\  |x|\leqslant 4,\  
-3.8\leqslant y \leqslant  -2.4,\  
-7.3\leqslant z\leqslant 0   \right\}.
\end{split}
\]

\begin{figure}[hbt]
\centering
\scalebox{0.8}{\includegraphics[clip]{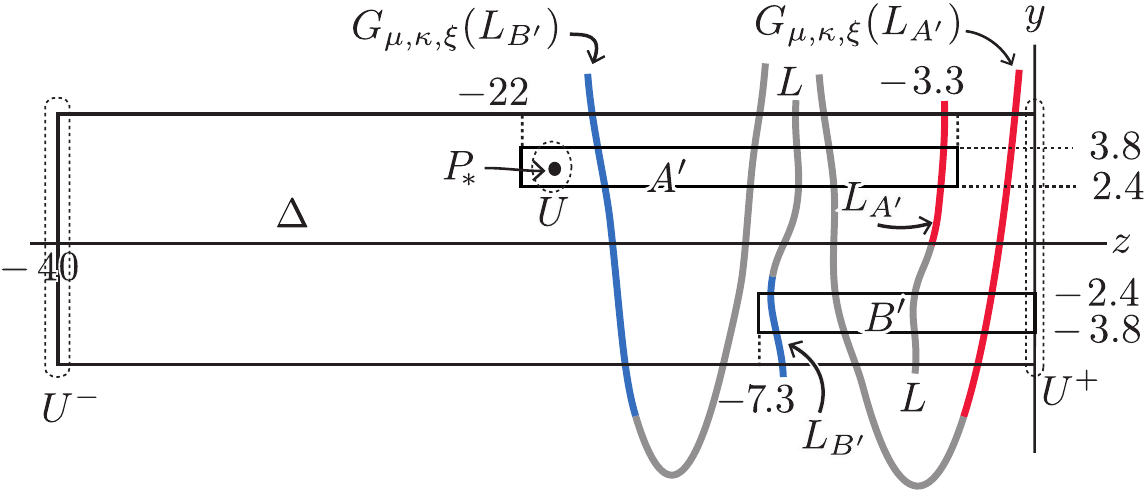}}
\caption{Vertical segments $L_{A'}$, $L_{B'}$ and their images}
\label{fig3}
\end{figure}

We define the notion of vertical curves 
through $A'$ (resp.\! $B'$) in the natural way.
That is, a curve $\sigma$ in $A'$ (resp.\ $B'$)
is called vertical 
if it is tangent to $\mathcal{C}^{uu}$
and connects $A'\cap \{ y = 3.8\}$ and $A'\cap \{ y = 2.4\}$
(resp.\! $B'\cap \{ y =  -3.8\}$ and $B'\cap \{ y = -2.4\}$).

Then, one can observe the following (see Figure \ref{fig3}):
For every  vertical curve $L$ 
through $\Delta$ to the right of $W^{s}_{0}$, 
\begin{itemize}
\item either $L_{A'} =L\cap A^{\prime}$ 
is a vertical curve through $A^\prime$, 
\item  or $L_{B'} = L\cap B^{\prime}$ 
is a vertical curve through $B^{\prime}$ 
(note that both cases may hold simultaneously).
\end{itemize}

In each case, we have the following:
\begin{itemize}
\item In the first case,
$G_{\mu,\kappa,\xi}(L_{A^{\prime}})$ contains a vertical curve
through $\Delta$ which is to the right of $W^{s}_{0}$
and disjoint from a small neighborhood $U^{+}$ of $\partial_{0}\Delta$
(remember that $\partial_{0} \Delta=\Delta\cap\{z=0\}$);
\item In the second case,
$G_{\mu,\kappa,\xi}(L_{B^{\prime}})$ contains
a vertical curve
through $\Delta$ which is  to the right of $W^{s}_{0}$
and disjoint from a small neighborhood $U$ of $W_{0}^{s}$.
\end{itemize}
Therefore,
(H5) holds for $G_{\mu,\kappa,\xi}$, and for
any diffeomorphism sufficiently $C^1$-close to $G_{\mu,\kappa,\xi}$.
\end{proof}

\begin{remark}\label{rem.bleint}
In the proof above, we can see that 
the image of the segment $\hat{\ell} :=\{ (0, t, 0) \mid |t| <4 \}$ under 
diffeomorphism $F$ which is sufficiently close to $G_{\mu, \kappa, \xi}$
contains a vertical segment which is to the right of $W_0^s$ of the blender.
Thus it has non-empty intersection with the stable manifold of the 
blender. This fact is used in Section~\ref{s.proof-of-last-thm}.
\end{remark}

\section{The non-transverse heterodimensional cycles}\label{s.Transitions}
In this section we 
describe the conditions satisfied by the diffeomorphism
with non-transverse heterodimensional cycles in Theorem \ref{thm.main1}. 

\subsection{Local dynamics at the saddle points}\label{ss.linearization}
Let $P$ and $Q$ be saddle periodic points
of $f\in \Diff^{r}(M)$ with $\dim(M)=3$ and $r\geqslant 2$.  
In what follows, we suppose that 
$\mathrm{per}(P)=\mathrm{per}(Q)=1$ for simplicity, 
that the points have indices $\ind(P)=2$ and $\ind(Q)=1$, and that the  
 unstable manifold $W^{u}(P)$ and the 
stable manifold $W^{s}(Q)$ have a heterodimensional tangency,
and the two invariant manifolds $W^{u}(Q)$ and $W^{s}(P)$ have a quasi-transverse intersection.
We fix small coordinate neighborhoods $U_{P}$  and $U_{Q}$ of $P$ and $Q$, respectively, and
consider transitions between these  neighborhoods
along the cycle.

For simplicity,  we assume that
$f\vert_{U_{P}}$ and $f\vert_{U_{Q}}$  are $C^r$ linearized,
i.e.,
\begin{equation}\label{eq.linearization}
f(x, y, z)=\left\{
\begin{array}{cc}
(\tilde\lambda x, \tilde\sigma y, \tilde\zeta z) & \mbox{if}\  (x, y, z)\in U_{P} \\
(\lambda x, \sigma y, \zeta z) &  \mbox{if}\   (x, y, z)\in U_{Q}
\end{array}\right.
\end{equation}
where
$$|\tilde\lambda|<1< |\tilde\sigma|<|\tilde\zeta|, \quad
 |\lambda|<|\zeta|<1<|\sigma|.
 $$
This assumption is guaranteed by the conditions on the eigenvalues of
$Df(P)$ and of $Df(Q)$, see \cite{S58,T71}.
In what follows 
we assume that 
$\lambda$ and $\tilde \zeta$ are both positive
(in the negative case it is enough to consider a pair number of iterations).

\begin{caveat*}
In what follows,  for simplicity we assume that  all eigenvalues are positive. For 
the cases where some eigenvalues are negative, 
the proofs work almost similarly with slight modifications.
\end{caveat*}

\subsection{Transitions along heteroclinic orbit}\label{subsection2-2}
In this subsection we describe the transitions from $U_{Q}$ to $U_{P}$ 
and from $U_{P}$ to $U_{Q}$
 along the heteroclinic orbits.

We consider first the transition from $U_{Q}$ to $U_{P}$ along the quasi-transverse 
orbit $X$.
In the linearizing coordinate in $U_{Q}$,  
up to multiplication by some constant along the  the $y$-direction,
we can take  the quasi-transverse intersection 
between $W^{u}(Q)$ and $W^{s}(P)$ with 
$X=(0, 1, 0)\in U_{Q}$.
Similarly, there exists a positive integer $N_{1}$ such that
$\tilde{X}=f^{N_{1}}(X)=(1,0,0)\in W_{\loc}^{s}(P)$, see Figure \ref{fig4}.
\begin{figure}[hbt]
\centering
\scalebox{0.55}{\includegraphics[clip]{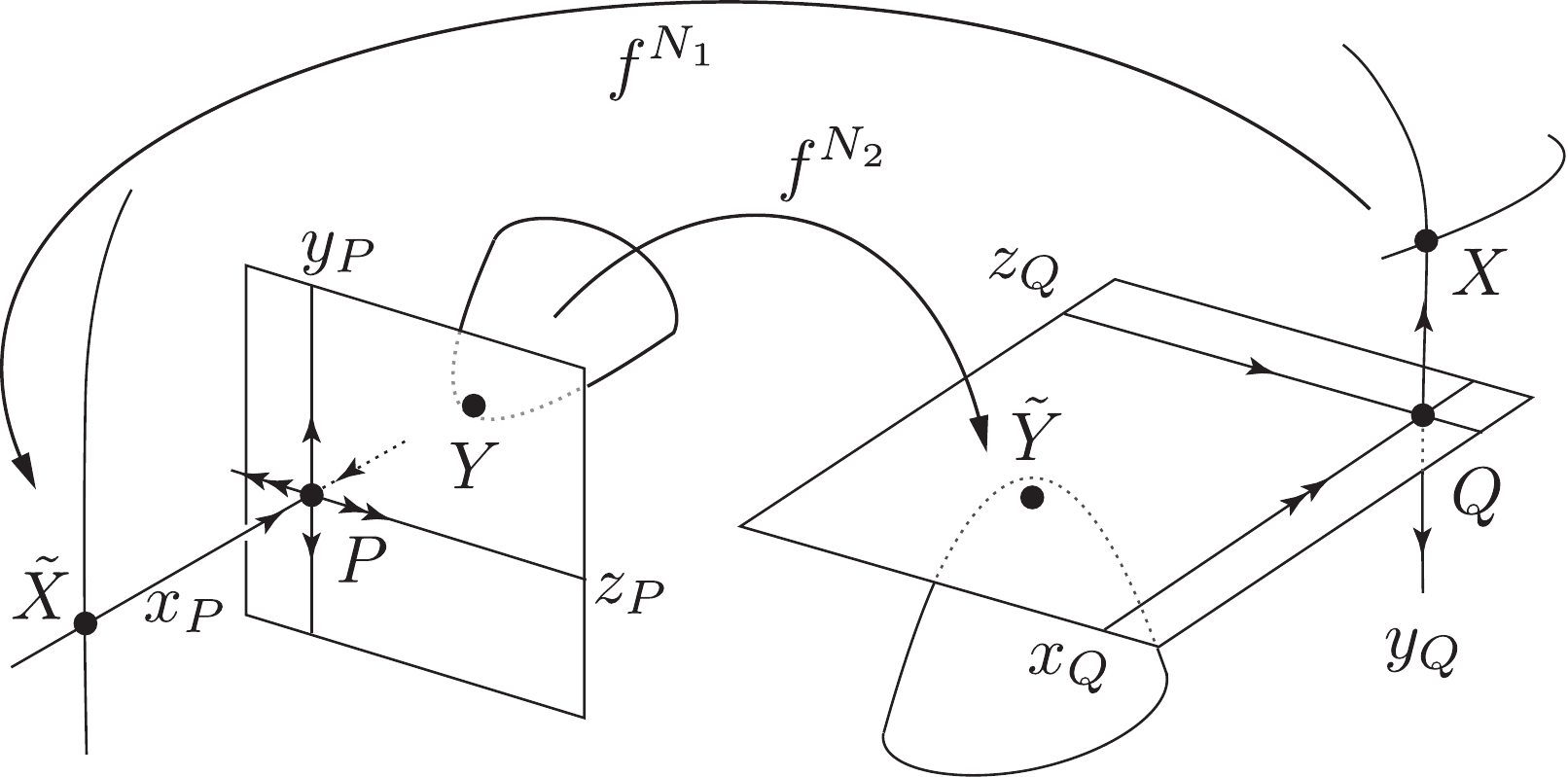}}
\caption{Non-transverse heterodimensional cycle and its transitions
}
\label{fig4}
\end{figure}

The transition of $f^{N_{1}}$ from a small neighborhood of $X$ to that of $\tilde{X}$
is expressed   as
\begin{equation}\label{eq.transition2}
f^{N_{1}}: \left(
 \begin{array}{c}
 x \\
1+y\\
z
 \end{array}
 \right)
 \longmapsto
 \left(\begin{array}{c}
1+\alpha_{1} x+\alpha_{2} y+\alpha_{3} z+\tilde{H}_{1}(x,y,z) \\
\beta_{1} x+\beta_{2} y+\beta_{3} z  +\tilde{H}_{2}(x,y,z) \\
\gamma_{1} x+\gamma_{2} y+\gamma_{3} z   + \tilde{H}_{3}(x,y,z)
 \end{array}
 \right),
\end{equation}
where each $\tilde{H}_{i}$ is the higher order term
 satisfying the following conditions: for every $i\in\{1,2,3\}$,
 $$\tilde{H}_{i}(\mathbf{0})=0; \
(\partial / \partial x)  \tilde{H}_{i}(\mathbf{0})
=(\partial / \partial y)  \tilde{H}_{i}(\mathbf{0})
=(\partial / \partial z)  \tilde{H}_{i}(\mathbf{0})
 =0.$$
 We assume that 
 \begin{equation}\label{tech-assumption1a}
\beta _{3}=\gamma _{2}=\gamma _{3}=0.
\end{equation}
Note that since $f$  is a diffeomorphism,
\begin{equation}\label{tech-assumption1b}
\beta_{2}\gamma_{1}\neq 0.
\end{equation}

Next, we consider the  transition from  $U_{P}$ to $U_{Q}$
along the orbit of heterodimensional tangency.
Let $Y\in U_{P}$ denote a point of tangency  between 
$W_{\loc}^{u}(P)$ and $W^{s}(Q)$.
There is a positive integer $N_{2}$ such that
$\tilde{Y}=f^{N_{2}}(Y)$ is  contained in $W^{s}_{\loc}(Q)$.
By some linear coordinate changes in $U_{P}$ and in $U_{Q}$,
one may set  $Y=(0, 1, 1)\in U_{P}$ and
$f^{N_{2}}(Y)=(1, 0, 1)\in U_{Q}$
respectively.
Note that this coordinate change can be done
independently of the
previous one involving $X$ and $X'$. 
Hereafter,
these new coordinates are both denoted  by $(x, y, z)$.

Since the tangency  is nondegenerate,
the transition of $f^{N_{2}}$ from a small neighborhood of $Y$ to that of $\tilde{Y}$
is expressed, by taking Taylor expansion,  as
\begin{equation}\label{eq.transition1}
f^{N_{2}}: \left(
 \begin{array}{c}
 x \\
1+y \\
1+z
 \end{array}
 \right)
 \longmapsto
 \left(\begin{array}{c}
1+a_{1} x+a_{2} y+a_{3} z+H_{1}(x,y,z) \\
b_{1} x+b_{2} y^{2}+ b_{3} z^{2} +b_{4} y z +H_{2}(x,y,z) \\
1+c_{1} x+c_{2} y+c_{3} z+H_{3}(x,y,z)
 \end{array}
 \right),
\end{equation}
where  every $H_{i}$ is  the higher order term  with the following conditions: for each
$i\in\{1,2,3\}$,
\begin{align*}
& H_{i}(\mathbf{0})=0;\
(\partial / \partial x)  H_{i}(\mathbf{0})
=(\partial / \partial y)  H_{i}(\mathbf{0})
=(\partial / \partial z)  H_{i}(\mathbf{0})
=0.
\end{align*}
We assume that 
 \begin{equation}\label{tech-assumption2}
  (\partial^{2} / \partial y^{2})  H_{2}(\mathbf{0})
 = (\partial^{2} / \partial z^{2})  H_{2}(\mathbf{0})
 = (\partial^{2} / \partial y \partial  z)  H_{2}(\mathbf{0})=0,\quad 
c_{3}=0.
\end{equation}
Furthermore, we assume that 
\begin{equation}\label{non-zero}
b_{2}b_{3}\neq 0,\quad 
\gamma_{1} a_{3}>0.
\end{equation}

\subsection{On the range of eigenvalues}\label{ss.eieva}

Theorem~\ref{thm.main1} assumes some conditions on the eigenvalues. 
In this section, we discuss the non-emptiness of the set of the 
numbers which satisfy these conditions.

We are interested in the existence of $6$-ple of numbers
$$ 0<\tilde{\lambda}< 1<\tilde{\sigma}< \tilde{\zeta},
\qquad
 0<\lambda< \zeta<1<\sigma
$$
satisfying the following conditions:
\begin{align}
& 0< (\tilde{\sigma}\tilde{\zeta})^{k} \sigma\zeta^{2}  <1, \label{ichi} \\
&0< (\tilde{\sigma}^{-3}\tilde{\zeta})^{k} \sigma^{-1}  <1, \label{ni} \\
& 0< (\tilde{\lambda}\tilde{\sigma})^{k} \sigma  <1, \label{san}
\end{align}
where $k=\dfrac{ \log \lambda^{-1}}{\log \tilde\zeta }$.

We prove the following:
\begin{lemma}\label{lem.Katsutoshi}
\begin{itemize}
\item Let   
$\mathcal{P} \subset \mathbb{R}^6$ 
denote the set of points 
$(\tilde{\lambda}, \tilde{\sigma}, \tilde{\zeta},  \lambda,  \zeta, \sigma)$
which satisfy the conditions above.
Then, $\mathcal{P}$ is a non-empty open set of $\mathbb{R}^6$.
\item On $\mathcal{P}$, the value $\log \lambda / \log \zeta$ ranges over $(1, 3/2)$.
\end{itemize}
\end{lemma}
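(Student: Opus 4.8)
The plan is to take logarithms and turn everything into a system of affine inequalities. Write $a=\log\tilde\lambda$, $b=\log\tilde\sigma$, $c=\log\tilde\zeta$, $p=\log\lambda$, $q=\log\zeta$ and $s=\log\sigma$, so that the ordering hypotheses read $a<0<b<c$ and $p<q<0<s$, while $k=-p/c>0$. Each of $(\tilde\sigma\tilde\zeta)^{k}\sigma\zeta^{2}$, $(\tilde\sigma^{-3}\tilde\zeta)^{k}\sigma^{-1}$ and $(\tilde\lambda\tilde\sigma)^{k}\sigma$ is a product of powers of positive numbers, so its positivity is automatic, and the conditions \eqref{ichi}, \eqref{ni} and \eqref{san} become the affine inequalities
$$
(E_{1})\colon\ k(b+c)+s+2q<0,\qquad
(E_{2})\colon\ k(c-3b)<s,\qquad
(E_{3})\colon\ k(a+b)+s<0 .
$$
On the open set $\{\tilde\lambda,\tilde\sigma,\lambda,\zeta,\sigma>0,\ \tilde\zeta>1\}$ the number $k$, and hence each of the three expressions above, is a continuous function of $(\tilde\lambda,\dots,\sigma)$; since $\mathcal{P}$ is cut out by finitely many strict inequalities among continuous functions, it is open. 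Non-emptiness then follows at once from the explicit point $(\tilde\lambda,\tilde\sigma,\tilde\zeta,\lambda,\zeta,\sigma)=(e^{-2},e,e^{3},e^{-3},e^{-5/2},e^{1/2})$, for which $k=1$ and one checks $(E_{1}),(E_{2}),(E_{3})$ directly.

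For the second assertion, note first that $p<q<0$ already gives $\log\lambda/\log\zeta=p/q>1$. For the upper bound I would argue as follows. Using $s>0$ one gets $s>\tfrac13 k(c-3b)$ in all cases: if $c\le 3b$ the right-hand side is $\le 0<s$, and if $c>3b$ it is a weakening of $(E_{2})$. Substituting into $(E_{1})$ gives
$$
2q<-k(b+c)-s<-k(b+c)-\tfrac13 k(c-3b)=-\tfrac43 kc ,
$$
hence $q<-\tfrac23 kc$; since $p=-kc$, this is precisely $p/q<\tfrac32$. So $\log\lambda/\log\zeta$ always lies in $(1,\tfrac32)$.

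It remains to show every $r\in(1,\tfrac32)$ is realized. Fix such an $r$ and normalize $k=1$ and $c=1$, so that $p=-1$ and $p/q=r$ forces $q=-1/r$ (which does satisfy $p<q<0$). Then $(E_{1})$ reads $s<\tfrac{2-r}{r}-b$ and $(E_{2})$ reads $s>1-3b$; the elementary inequalities $\frac{r-1}{r}<\frac13<\frac{2-r}{r}$ hold exactly because $r\in(1,\tfrac32)$, so for $b\in\bigl(\tfrac{r-1}{r},\tfrac{2-r}{r}\bigr)$ the interval of admissible $s$, namely $\bigl(\max\{0,1-3b\},\tfrac{2-r}{r}-b\bigr)$, is a non-empty subinterval of $(0,\infty)$. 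Choosing any such $b$ and $s$, and then $\tilde\lambda$ small enough (i.e.\ $a$ sufficiently negative) to satisfy $(E_{3})$, produces a point of $\mathcal{P}$ with $\log\lambda/\log\zeta=r$.

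The only delicate point — and the reason the sharp bound comes out to be exactly $\tfrac32$ — is the truncation step for $(E_{2})$: one must invoke $s>0$ to replace $k(c-3b)$ by $\max\{0,k(c-3b)\}\ge \tfrac13 k(c-3b)$ before combining with $(E_{1})$, which is what produces the coefficient $\tfrac43$ (hence $\tfrac32$). Everything else — the log-linearization, the openness, and the interval bookkeeping in the surjectivity step — is routine.
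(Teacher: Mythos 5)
Your proof is correct, and the underlying idea (take logarithms, reduce to affine inequalities, do elementary bookkeeping) is the same as the paper's. The difference is presentational but worth noting: the paper substitutes the two ratio-variables $S=\log\tilde\sigma/\log\tilde\zeta$ and $T=\log\zeta/\log\lambda$, reduces the feasibility question to a region in the $(S,T)$-plane, and then reads off the non-emptiness and the range $T\in(2/3,1)$ (equivalently $1/T\in(1,3/2)$) from Figure 5; you instead keep all six log-coordinates, exhibit an explicit witness $(e^{-2},e,e^{3},e^{-3},e^{-5/2},e^{1/2})$ for non-emptiness, and derive the bound $q<-\tfrac23 kc$ (which is literally the paper's $T>2/3$ after the change of variables) by a short algebraic chain using $s>\max\{0,k(c-3b)\}\ge\tfrac13 k(c-3b)$. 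Your version is in that sense more self-contained — it replaces the ``by the figure'' steps with arithmetic — and it also explicitly proves surjectivity onto $(1,3/2)$, which the paper asserts but leaves to the picture. Both are valid; yours buys rigor at the cost of the clean two-variable reduction.
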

Note that the second item implies that the range of $\alpha$
in Theorem~\ref{thm.main3} is $(0, 1/2)$.

\begin{proof}
The openness of $\mathcal{P}$ is clear. Thus we concentrate on the non-emptiness.
There are three inequalities (\ref{ichi}), (\ref{ni}) and (\ref{san}). 
First, we restrict our attention to (\ref{ichi}), (\ref{ni}).
Indeed, if we can prove the non-emptiness of numbers satisfying (\ref{ichi}) and (\ref{ni}), 
then, by taking sufficiently small $\tilde{\lambda}>0$, we can prove the non-emptiness 
of the parameter satisfying (\ref{san}).

The inequality (\ref{ichi}) is equivalent to the following:
\[
\log \sigma < 
\log \lambda \left( \frac{\log \tilde{\sigma}}{\log \tilde{\zeta}} + 1 - 2 \,\frac{\log \zeta}{\log \lambda} \right) =: R.
\]
Similarly, by a direct calculation we see that (\ref{ni}) is equivalent to 
\[
\log \sigma > - \log \lambda\left( 1 -3 \, \frac{\log \tilde{\sigma}}{\log \tilde{\zeta}}\right) =:L.
\]
Then, consider the two terms $L$ and $R$.  The non-emptiness of $\mathcal{P}$ 
is equivalent to the following two inequalities:
\begin{eqnarray}\label{eq.iikae}
L < R \quad  \mbox{and} \quad  R>0.
\end{eqnarray}
Indeed, if (\ref{eq.iikae}) holds, then take $\sigma$ so that $\log \sigma > L$ 
and $\log \sigma <R$ hold (note that the variable $\sigma$ does not appear in $L$ and $R$  and 
the only restriction on $\sigma$ is $\sigma > 1$, that is, $\log \sigma >0$).

Put
\[
S :=\frac{\log \tilde{\sigma}}{\log \tilde{\zeta}}, \quad 
T := \frac{\log \zeta}{\log \lambda}.
\]
Note that the conditions $ 1 <\tilde{\sigma} < \tilde{\zeta}$ and 
$1 > \zeta > \lambda$ implies that we have 
$$0 < S, T <1.$$

The inequalities $L < R$ and $R>0$ are respectively equivalent to the following:
\[
 T >-S+1 \quad \mbox{and} \quad T > (1/2)(S+1)
\]
By Figure \ref{fig5} below,
we know that 
the set of $(S, T)$ which satisfies these conditions is non-empty.

\begin{figure}[hbt]
\centering
\scalebox{0.8}{\includegraphics[clip]{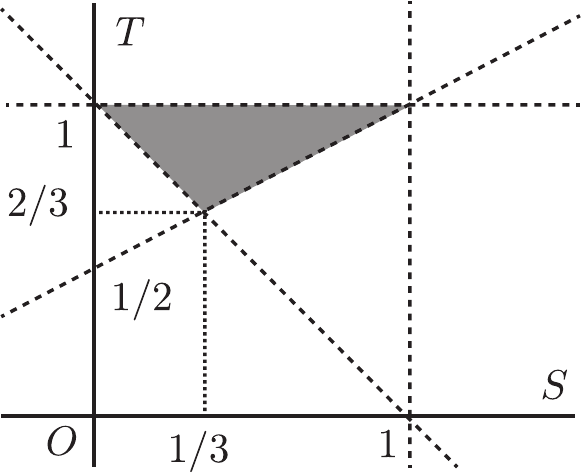}}
\caption{The domain of $(S, T)$ 
}
\label{fig5}
\end{figure}

Now we prove the non-emptiness of $\mathcal{P}$. First, fix some $(S, T)$ 
from the region of Figure \ref{fig5}.
Then, take $(\tilde{\sigma}, \tilde{\zeta})$ and  
$(\zeta, \lambda)$ which correspond to the values of $(S, T)$. 
Finally, take $\sigma$ and $\tilde{\lambda}$. This gives us the desired 
6-ple of numbers.

Finally, we see the range of 
$\frac{\log \lambda}{\log \zeta} = 1/T$. 
By the figure above, the range of $T$ is $2/3< T < 1$. Thus 
the range of $1/T$ is $(1, 3/2)$, which completes the proof.
\end{proof}

%

\section{The six-parameter family $\{f_{\mu,\nu}\}$ }\label{ss.Local perturbations}
In this section we construct  a
six-parameter family $\{f_{\mu,\nu} \} \subset \Diff^{r}(M)$, $\mu, \nu\in [-\epsilon, \epsilon]^{3}$, 
with $f_{\bf 0,0}=f$ in Theorem \ref{thm.main1}. This family 
is obtained by local perturbations
near the quasi-transverse intersection $\tilde X$
and the heterodimensional tangency $\tilde Y$.
To define the local perturbations,
we use a smooth bump function
\begin{equation}\label{bump function}
B(x,y,z) = b(x)b(y)b(z),
\end{equation}
where $b$ is a  $C^{r}$ function  on $\mathbb{R}$
satisfying
$$
\begin{cases}
b(x)= 0\ & \mbox{if}\  2\rho \leqslant \vert x\vert; \\
0< b(x) <1\ & \mbox{if}\  \rho<|x|<2\rho;\\
b(x)= 1\  &  \mbox{if}\  \vert x\vert  \leqslant\rho,
\end{cases}
$$
where $\rho>0$ is some small number. 
Let
$U_{\tilde X}$ and $U_{\tilde Y}$
be  $2\rho$-neighborhoods
of $\tilde X=(1,0,0)$
and of  $\tilde Y=(1,0,1)$
which  satisfy
$U_{\tilde X}\subset U_{P}$, $P\not\in \Cl(U_{\tilde X})$ and
$U_{\tilde Y}\subset U_{Q}$, $Q\not\in \Cl(U_{\tilde Y})$,
where $\Cl (\, \cdot \,)$ means the closure of the corresponding set.

The number $\rho$ is taken so small such that 
\[
f(U_{\tilde{X}}) \cap U_{\tilde{X}}= \emptyset  \,\,\, \mbox{and} \,\,  \,
f(U_{\tilde{Y}}) \cap U_{\tilde{Y}} = \emptyset 
\]
hold.

For $\mu=(\mu_{1},\mu_{2}, \mu_{3}), \nu=(\nu_{1},\nu_{2}, \nu_{3})\in \mathbb{R}^{3}$,
let $\{t_{\mu,\nu}\}$ be a family of maps
satisfying the following:
\begin{itemize}
\item if  $(1+x, y, 1+z)\in U_{\tilde Y}$,
$$
t_{\mu,\nu}(1+x, y, 1+z) =(1+x, y, 1+z)+
B(x,y,z) (\mu_{1},\mu_{2}, \mu_{3}),
$$
\item if $(1+x, y, z)\in U_{\tilde X}$,
$$
t_{\mu,\nu}(1+x, y,  z) =(1+x,  y,   z)+
B(x,y,z)  (\nu_{1},  \nu_{2}, \nu_{3}),
$$
\item in the complement of $U_{\tilde X}\cup U_{\tilde Y}$,
$t_{\mu,\nu}$ is the identity.
\end{itemize}
Then this family is in $\Diff^{r}(M)$ for small $\mu, \nu\in [-\epsilon, \epsilon]^{3}$ where $ \epsilon>0$  is some small number. 

We now define
\begin{equation}\label{def-two-parameter-family}
f_{\mu,\nu}=t_{\mu,\nu}\circ f,
\end{equation}
which has the following properties.
 \begin{itemize}
\item Since $P\not\in \Cl(U_{\tilde X})$  and $Q\not\in \Cl(U_{\tilde Y})$,
 $f_{\mu,\nu}$ has   saddle periodic points $P_{\mu,\nu}=P$ and 
$Q_{\mu,\nu}=Q$.
 Moreover, near $P$ and $Q$,
 $f_{\mu,\nu}$ has the same  forms as in (\ref{eq.linearization}).

\item
For any $(x,1+y, z)$ sufficiently near $X=(0, 1, 0)$,
\begin{equation}\label{eq.transition2'}
f^{N_{1}}_{\mu, \nu}(x,1+y,z)=f^{N_{1}}(x,1+y,z)+ (\nu_{1},  \nu_{2}, \nu_{3}),
\end{equation}
which  has the same form 
as  (\ref{eq.transition2}) when $\nu=\mathbf{0}$.
The parameters $\nu_{2}$ and $\nu_{3}$ control the unfolding of the 
quasi-transverse intersection $\tilde{X}$. See Figure \ref{fig6}-(a).

 \item
 For any $(x, y+1, z+1)$ sufficiently near $Y=(0, 1, 1)$,
 \begin{equation}\label{eq.transition1'}
f^{N_{2}}_{\mu, \nu}(x,1+y,1+z)=f^{N_{2}}(x,1+y,1+z)+ (\mu_{1},  \mu_{2}, \mu_{3}),
\end{equation}
which  has the same form as  (\ref{eq.transition1}) when $\mu=\mathbf{0}$.
The parameter $\mu_{2}$ controls the unfolding of the
 heterodimensional tangency  at $\tilde{Y}$.   See Figure \ref{fig6}-(b).
 \end{itemize}
\begin{figure}[hbt]
\centering
\scalebox{0.85}{\includegraphics[clip]{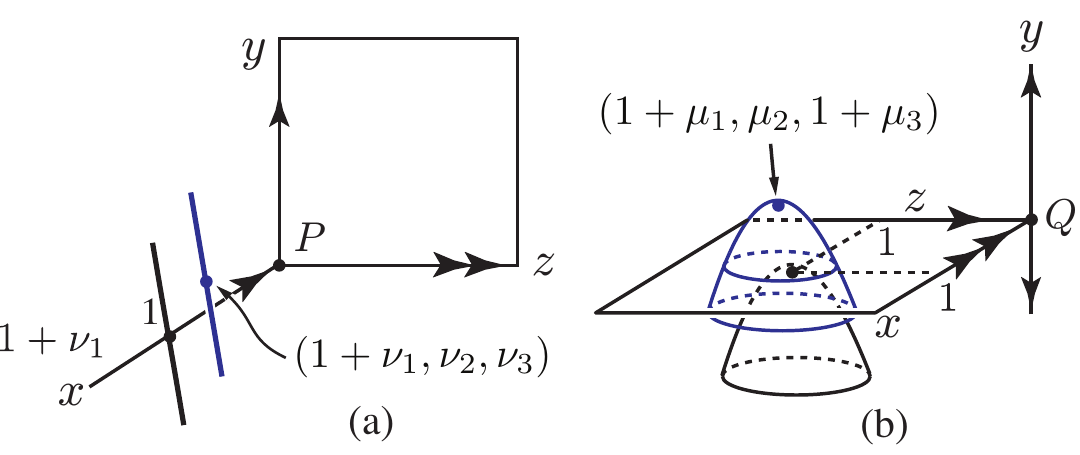}}
\caption{(a) Quasi-transverse intersection  (b) heterodimensional tangency
}
\label{fig6}
\end{figure}

\section{Renormalizations: Proof of Theorem \ref{thm.main1}}\label{s.Proof of Theorem1}
%

The renormalization scheme involves maps of the form 
$ f^{N_{2}+m+N_{1}+n}_{\mu, \nu}$
where $N_{1}$ and $N_{2}$ are (fixed) transition times, 
and $n$ and $m$ are the permanence times in $U_{Q}$ and $U_{P}$, respectively.
We pick up pair of sojourn times to get return map with neutral behavior.

\subsection{For selecting the sojourn times}\label{ss.Essential results}
We first prove an auxiliary result which enables us to get the 
``neutral dynamics''. For that we state 
an ``irrationality condition'' of real numbers (Claim \ref{cla.residual set}).
This condition is used to choose a convenient combination of product of 
the  strong unstable eigenvalue $\tilde{\zeta}$ of $P$ and
the strong stable one  $\lambda$ of $Q$  in (\ref{eq.linearization}).
With this condition we pick up a sequence of renormalizations
converging to a central unstable H\'enon-like family. 

We put
$$\mathcal{Z}:=\{(\tilde \zeta,\lambda) \in \mathbb{R}^2\  \vert\ 0< \lambda <1<\tilde{\zeta} \}.$$

\begin{lemma}\label{lem.Neutrality}
There is a residual subset $\mathcal{R}$ of $\mathcal{Z}$ such that 
for every
$(\tilde{\zeta}, \lambda)\in  \mathcal{R}$ the following holds:
For any $\varepsilon>0$,  $N_{0}>0$
and $\xi >0 $ satisfying $\varepsilon  /\xi <1$,
there exist  integers $m, n>N_{0}$ such that the following holds: 
\[
\left|  \gamma_{1}  a_{3} \lambda^{n} \tilde{\zeta}^{m} - \xi \right| < \varepsilon, \quad 
 \vert m-nk -  \tilde{k} \vert<1, 
\] 
where  $\gamma_{1}$ and  $a_{3}$ are the constants given
 in (\ref{eq.transition2}), (\ref{eq.transition1}) and (\ref{non-zero}),
$k={\log \lambda^{-1} }/{\log \tilde{\zeta}  }$ 
and
$\tilde{k}=\log (\gamma_{1} a_{3} \xi^{-1}) / \log  \tilde{\zeta} $.
\end{lemma}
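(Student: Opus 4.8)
The plan is to reduce the two simultaneous conditions to a statement about the orbit of an irrational rotation (or rather, a linear flow) on a circle, and then invoke a standard density/Baire-category argument. First I would take logarithms. Writing $m = nk + s$ for the (real) quantity $s := m - nk$, the second condition $|m - nk - \tilde k| < 1$ is just $|s - \tilde k| < 1$, i.e. a bound on how far $s$ is from the target value $\tilde k = \log(\gamma_1 a_3 \xi^{-1})/\log\tilde\zeta$. For the first condition, note that
\[
\gamma_1 a_3 \lambda^n \tilde\zeta^m = \gamma_1 a_3 \tilde\zeta^{-nk} \tilde\zeta^{m} = \gamma_1 a_3 \tilde\zeta^{\,m - nk} = \gamma_1 a_3 \tilde\zeta^{\,s},
\]
using $\lambda = \tilde\zeta^{-k}$. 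So if we could choose $n,m$ integers with $s = m - nk$ exactly equal to $\tilde k$, the first expression would be exactly $\gamma_1 a_3 \tilde\zeta^{\tilde k} = \xi$ and both conditions would hold with room to spare. The whole point is therefore to make $m - nk$ approximate $\tilde k$ well: if $|s - \tilde k|$ is small then $|\gamma_1 a_3 \tilde\zeta^s - \xi| = |\gamma_1 a_3 \tilde\zeta^{\tilde k}||\tilde\zeta^{s - \tilde k} - 1|$ is small by continuity, and simultaneously $|s - \tilde k| < 1$. So both conditions follow once we show: for a residual set of $(\tilde\zeta,\lambda)$, the set $\{ m - nk : m,n \in \mathbb{N},\ m,n > N_0\}$ comes arbitrarily close to any prescribed real number.

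Next I would identify the residual set $\mathcal{R}$. The set $\{ m - nk : m,n \in \mathbb{Z}\}$ is a subgroup of $\mathbb{R}$, and it is dense in $\mathbb{R}$ precisely when $k = \log\lambda^{-1}/\log\tilde\zeta$ is irrational; the set of $(\tilde\zeta,\lambda) \in \mathcal{Z}$ for which $k$ is irrational is a dense $G_\delta$ (the rationality locus is a countable union of real-analytic curves $\log\lambda^{-1} = (p/q)\log\tilde\zeta$, each closed with empty interior), so this is our $\mathcal{R}$ — possibly this is exactly the "irrationality condition" the authors isolate as Claim~\ref{cla.residual set}, which I would simply cite. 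The remaining issue is the constraint $m, n > N_0$ with both \emph{positive}: density of the subgroup $\{m - nk\}$ only gives integers of unrestricted sign. But this is handled by a standard three-distances / pigeonhole observation: since $k > 0$, for any target $t$ and any $\delta$, first pick $n_0$ large with $\{n_0 k\}$ within $\delta$ of $\{-t\}$ modulo $1$ (possible by equidistribution of $n k \bmod 1$ when $k$ is irrational, or just by density of the orbit), then take $n = n_0 + Nq$-type increments and choose $m$ to be the nearest integer above $n k + t$; enlarging $n$ freely forces $m = \lceil nk + t\rceil$ to be as large as we like as well, so both $m,n > N_0$ can be arranged while keeping $|m - nk - t| < \delta$. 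Concretely: given $\varepsilon, N_0, \xi$, set $\delta = \min(1, \text{(the $\varepsilon$-preimage under continuity of $s \mapsto \gamma_1 a_3 \tilde\zeta^s$)})$, apply the density statement with target $t = \tilde k$ and tolerance $\delta$ to get $n > N_0$ and $m = $ nearest integer to $nk + \tilde k$ with $|m - nk - \tilde k| < \delta \le 1$, enlarge $n$ if necessary so that $m > N_0$ too, and the first inequality follows from the mean value estimate above.

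The main obstacle — really the only non-formal point — is making the positivity constraint $m, n > N_0$ compatible with hitting $m - nk$ near $\tilde k$: a dense additive subgroup of $\mathbb{R}$ has elements near $\tilde k$, but a priori they could require $m$ or $n$ negative, or small. I expect to handle this exactly as sketched: because $k$ is a positive irrational, the sequence $(nk \bmod 1)_{n \ge 1}$ is equidistributed (Weyl), so for arbitrarily large $n$ the fractional part $\{nk\}$ lands within $\delta$ of $\{\tilde k\}$ (indeed hits a full-measure set of target residues), and then rounding $nk + \tilde k$ to the nearest integer $m$ gives $|m - (nk+\tilde k)| < \delta$; taking $n$ large makes $m \approx nk + \tilde k$ large too, securing $m, n > N_0$. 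Everything else — the logarithmic reformulation, the continuity estimate converting an $s$-bound into the $\varepsilon$-bound on $\gamma_1 a_3 \lambda^n \tilde\zeta^m$, and the $G_\delta$-density of $\mathcal{R}$ — is routine. I would also remark that $\gamma_1 a_3 > 0$ by hypothesis~\eqref{non-zero}, so $\tilde k = \log(\gamma_1 a_3 \xi^{-1})/\log\tilde\zeta$ is well-defined and real, which is what lets the argument get off the ground.
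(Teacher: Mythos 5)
Your overall route is the same as the paper's: take logarithms, use $\lambda=\tilde\zeta^{-k}$ to reduce both inequalities to making $m-nk$ approximate a fixed target, and establish this approximation for a residual set of parameters. Where you differ is in how the residual set is produced. The paper's Claim~\ref{cla.residual set} constructs $\mathcal{R}=\bigcap_{i,j}U_{i,j}$ abstractly, proving density of each $U_{i,j}$ by covering the fourth quadrant of the $(\log\tilde\zeta,\log\lambda)$-plane with open strips $|mX+nY-r_i|<1/j$ of rational slope $-m/n$; you instead identify $\mathcal{R}$ explicitly as the irrationality locus of $k$ (complement a countable union of analytic curves, hence meagre), and then use equidistribution of $nk\bmod 1$ to get density of $\{m-nk:m,n\in\mathbb{N}\}$. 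The two are equivalent, but your version is more concrete, and it handles a point the paper glosses over: Claim~\ref{cla.residual set} only asserts density, while Lemma~\ref{lem.Neutrality} needs $m,n>N_0$; your equidistribution argument makes it transparent that $n$ can be taken large and then $m=\lceil nk+t\rceil$ is forced large too (the paper's proof silently relies on the easily-checked fact that the set of admissible pairs $(m,n)$ is infinite with both coordinates unbounded). One caveat, which you inherit from the statement itself: the assertion $\gamma_1a_3\tilde\zeta^{\tilde k}=\xi$ is off by a sign, since $\tilde\zeta^{\tilde k}=\gamma_1a_3\xi^{-1}$ gives $\gamma_1a_3\tilde\zeta^{\tilde k}=(\gamma_1a_3)^2/\xi$. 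The target for $m-nk$ is in fact $-\tilde k$, so the second inequality should read $|m-nk+\tilde k|<1$; this is also what falls out of the paper's own derivation from inequality~(\ref{conve}). The slip is harmless downstream, where only a uniform bound on $|m-nk|$ is used, but it is worth flagging so that the argument that "$s=\tilde k$ gives the exact target" is corrected to "$s=-\tilde k$".
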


To prove this lemma we need the following claim whose proof is postponed.
\begin{claim}\label{cla.residual set}
There exists a residual subset $ \mathcal{R}$  of $ \mathcal{Z}$ 
such that  for every   $(\tilde{\zeta}, \lambda)\in \mathcal{R}$ the following holds:
$$
\Cl  \left( \left\{
m\log \tilde{\zeta} + n\log \lambda \ \middle\vert \ m, n\in \mathbb{N}
\right\} \right)=\mathbb{R}.
$$
\end{claim}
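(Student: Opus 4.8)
The plan is to realize the desired residual set as a countable intersection of open dense subsets of $\mathcal{Z}$, using a Baire-category argument together with the classical fact that $\mathbb{Z} a + \mathbb{Z} b$ is dense in $\mathbb{R}$ whenever $a/b$ is irrational. The subtlety is that Claim \ref{cla.residual set} only allows $m,n\in\mathbb{N}$ (not all of $\mathbb{Z}$), and we must control the closure of $\{m\log\tilde\zeta + n\log\lambda\}$ with $\log\tilde\zeta>0$ and $\log\lambda<0$. Because one coefficient is positive and the other negative, the semigroup generated in $\mathbb{R}$ is actually a group-like object: given any target $t\in\mathbb{R}$ and any small window, one can add a large positive multiple of $\log\tilde\zeta$ and then a large negative multiple of $\log\lambda$ (i.e.\ large $n$) to stay near $t$, so the $\mathbb{N}\times\mathbb{N}$ restriction costs nothing. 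I would make this precise as a lemma: if $(\log\tilde\zeta)/(\log\lambda)$ is irrational then $\Cl(\{m\log\tilde\zeta+n\log\lambda : m,n\in\mathbb{N}\})=\mathbb{R}$.

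Concretely, first I would fix a countable dense set $\{t_j\}_{j\in\mathbb{N}}\subset\mathbb{R}$ (say $\mathbb{Q}$) and, for each pair $(j,\ell)\in\mathbb{N}^2$, define
\[
\mathcal{R}_{j,\ell}:=\left\{(\tilde\zeta,\lambda)\in\mathcal{Z}\ \middle\vert\ \exists\, m,n\in\mathbb{N}\ \text{with}\ \bigl|m\log\tilde\zeta+n\log\lambda-t_j\bigr|<\tfrac1\ell\right\}.
\]
Each $\mathcal{R}_{j,\ell}$ is open in $\mathcal{Z}$ since the defining inequality is strict and $m,n$ are fixed witnesses depending continuously on $(\tilde\zeta,\lambda)$. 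For density: given any $(\tilde\zeta_0,\lambda_0)\in\mathcal{Z}$ and any neighborhood, I can perturb to a nearby $(\tilde\zeta,\lambda)$ with $(\log\tilde\zeta)/(\log\lambda)$ irrational (irrationality is generic), and then the density lemma above produces $m,n\in\mathbb{N}$ with $m\log\tilde\zeta+n\log\lambda$ within $1/\ell$ of $t_j$; hence $(\tilde\zeta,\lambda)\in\mathcal{R}_{j,\ell}$. Thus each $\mathcal{R}_{j,\ell}$ is open and dense, and $\mathcal{R}:=\bigcap_{j,\ell}\mathcal{R}_{j,\ell}$ is residual in $\mathcal{Z}$ by Baire. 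For $(\tilde\zeta,\lambda)\in\mathcal{R}$ and any real $t$, choosing $t_j$ arbitrarily close to $t$ and $\ell$ large shows $t$ lies in the closure of the semigroup, giving $\Cl(\{m\log\tilde\zeta+n\log\lambda\})=\mathbb{R}$.

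The main obstacle, and the only nonroutine point, is the density lemma with the $\mathbb{N}\times\mathbb{N}$ restriction. I would handle it by writing $a=\log\tilde\zeta>0$, $b=-\log\lambda>0$ with $a/b$ irrational, so the standard result gives that $\{ma-nb : m,n\in\mathbb{Z}\}$ is dense; then, given a target $t$ and $\varepsilon>0$, pick $(m_0,n_0)\in\mathbb{Z}^2$ with $|m_0 a - n_0 b - t|<\varepsilon$, and correct the signs by adding a common large multiple $(Nb', Na')$ where $a', b'$ are suitable integers making $m_0+Nb'\ge 0$ and $n_0+Na'\ge 0$ while $m_0 a - n_0 b$ is unchanged — more simply, use the density of $\{m a - n b\}$ over $m,n\ge M$ for every $M$, which follows from the group case by translating by $(M a)\cdot(\text{approximation of }0)$. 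Once this lemma is in hand the rest is bookkeeping. I would also remark that this is exactly the genericity needed to feed Lemma \ref{lem.Neutrality}, where the extra constraint $|m-nk-\tilde k|<1$ is obtained for free once $m\log\tilde\zeta+n\log\lambda$ is pinned down near $\log(\gamma_1 a_3 \xi^{-1})$, since that constraint is just a rewriting of the same approximate equality.
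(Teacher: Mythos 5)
Your proposal reaches the same Baire-category scaffolding as the paper -- a countable intersection of open sets indexed by a dense sequence of targets $t_j$ and precisions $1/\ell$, with openness immediate -- but establishes density of each $\mathcal{R}_{j,\ell}$ by a genuinely different route. The paper works directly in the $(X,Y)=(\log\tilde\zeta,\log\lambda)$ plane: the parameter set realizing the approximation is the union of open strips $V_{i,j}(m,n)=\{|mX+nY-r_i|<1/j\}$ with slope $-m/n$ and $Y$-intercept near $r_i/n\to 0$, and density of the rational slopes $\{-m/n\}$ forces $\bigcup_{m,n}V_{i,j}(m,n)$ to be dense in the fourth quadrant; pulling back under $(\log,\log)$ gives density of $U_{i,j}$ with no irrationality considerations at all. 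You instead perturb $(\tilde\zeta,\lambda)$ to make $\log\tilde\zeta/\log\lambda$ irrational and invoke a Kronecker-type lemma: if $a,b>0$ with $a/b$ irrational, then $\{ma-nb : m,n\in\mathbb{N}\}$ is dense in $\mathbb{R}$. Both are valid; the paper's argument is self-contained and elementary (no number theory), while yours, once the semigroup lemma is in place, actually makes Baire superfluous -- the explicit set $\{(\tilde\zeta,\lambda):\log\tilde\zeta/\log\lambda\notin\mathbb{Q}\}$ is already residual (and full measure) and satisfies the conclusion, which is a cleaner and strictly more informative statement than the one you deduce.

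One correction on your key lemma. Your first fix for the $m,n\in\mathbb{N}$ restriction -- adding $(Nb',Na')$ to $(m_0,n_0)$ ``while $m_0a-n_0b$ is unchanged'' -- cannot work: keeping $ma-nb$ fixed requires $\Delta m\,a=\Delta n\,b$, impossible with integer increments when $a/b$ is irrational. Your ``more simply'' alternative is the right idea but is only gestured at. The clean version: since $a/b$ is irrational, $\{ma \bmod b : m\in\mathbb{N}\}$ is dense in $[0,b)$; so given $t\in\mathbb{R}$ and $\varepsilon>0$ choose $m\in\mathbb{N}$ with $ma>t$ and with $ma-t$ reducing mod $b$ to some $r\in[0,\varepsilon)$, then $n:=\lfloor (ma-t)/b\rfloor\in\mathbb{N}$ satisfies $|ma-nb-t|=r<\varepsilon$. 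With that lemma pinned down your argument is complete.
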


Assuming this claim let us give a proof of the lemma above.
\begin{proof}[Proof of Lemma \ref{lem.Neutrality}]
Fix $\varepsilon>0$,  $N_{0}>0$ and $\xi > 0$. 
We take $\varepsilon_0 >0$ such that 
$ e^{\varepsilon_0} -1 < \varepsilon / \xi$.
By the condition $\varepsilon /\xi <1$, we have  $\varepsilon_0 <1$.
From Claim~\ref{cla.residual set} and (\ref{non-zero}),
for any  $\varepsilon_0>0$,  $N_{0}>0$ and $\xi >0 $,
there are integers $m, n>N_{0}$ such that
\begin{eqnarray} 
\left\vert
\log (\gamma_{1}  a_{3}  \xi^{-1})
+ m\log\tilde{\zeta}+n\log\lambda\right\vert
<\varepsilon_0.   \label{conve}
\end{eqnarray}
For such $m$ and $n$, we have 
$\vert m-nk -  \tilde{k} \vert
<\varepsilon_0/\log\tilde{\zeta} <1$.

By taking the exponential of (\ref{conve}), we have 
$$
\xi e^{-\varepsilon_0} < \gamma_{1}  a_{3} \lambda^{n}  \tilde{\zeta}^{m} < \xi e^{\varepsilon_0}.
$$
Then, by subtracting $\xi$ from each side, 
we get $ \left| \gamma_{1}  a_{3}  \lambda^{n} \tilde{\zeta}^{m} - \xi \right| < \varepsilon$.
\end{proof}

\begin{proof}[Proof of Claim \ref{cla.residual set}]
Let $\{ r_{i}\}_{i\in \mathbb{N}}$ be a sequence which is dense in $\mathbb{R}$.
For every $r_{i}$ and $j\in \mathbb{N}$, let us consider 
$U_{i, j} \subset \mathcal{Z}$
defined as follows:
$$
U_{i,j}:=\left\{ (\tilde{\zeta}, \lambda)\ \middle\vert\
\exists\ m, n\in \mathbb{N}\text{ s.t. } 
| m\log\tilde{\zeta}+n\log\lambda-r_{i} |<1/j
\right\}.
$$
The sets
$U_{i, j}$ are clearly open in $\mathcal{Z}$. 
for every positive integers 
$i$ and $j$. We claim the
density of $U_{i, j}$.
Then
 $\mathcal{R}:= \cap_{i, j} U_{i, j}$ 
 is the desired residual  set (by Baire's category theorem).
Consider  the (open) set  
$$V_{i,j}(m,n) :=\left\{(X,Y)
=(\log\tilde{\zeta}, \log\lambda) \in \mathbb{R}^2
\ \middle\vert\ | mX+nY-r_{i} |<1/j
\right\}.$$
Observe that, in $XY$-plane, $V_{i,j}(m,n)$ is an open strip containing the
(open) segment $((r_{i}-1/j)/n, (r_{i}+1/j)/n)$ on the $Y$-axis,
going downward right in the fourth quadrant 
$(0, +\infty)\times (-\infty, 0)$ with slope $-m/n$.
Then, by the density of rational numbers $\{ m/n \, ; \, m, n \in \mathbb{Z}_{>0} \}$ and taking $n$ large,
we get 
$$
\Cl \left( \bigcup_{m, n >0}  
 V_{i,j}(m,n)\ \right)=(0, +\infty)\times (-\infty, 0).
$$
Then the pullback of $\cup_{m, n} V_{i, j}(m, n)$ to $\mathcal{Z}$ 
under the map $(x,y)\mapsto (\log |x|, \log|y|)$
coincides with $U_{i, j}$, which shows the density of $U_{i, j}$.
\end{proof}

\subsection{Renormalizations near tangencies}\label{ss.reno}
Before going our construction 
recall the conditions
(\ref{eigenvalue condition 1})--(\ref{eigenvalue condition 2}) and 
 (\ref{eq.transition2})--(\ref{non-zero}).
In what follows 
we assume that $(\lambda, \tilde{\zeta})$ belongs to the residual subset $ \mathcal{R}$ 
of Lemma \ref{lem.Neutrality}.

Our renormalization scheme  consists of 
a sequence of coordinate changes $\Psi_{m,n}$,
reparametrizations of 
$\mu_{m,n}$ and 
parameters $\nu_{m,n}$ depending on given integers $m, n>0$, 
which satisfy the following conditions.

\begin{itemize}
\item
The coordinate change 
$\Psi_{m, n} : K \to U_Q$,
(where $K$ is a compact neighborhood of the origin)
$(x, y, z)=\Psi_{m,n}( \bar{x} , \bar{y}, \bar{z})$ is defined by
\begin{equation}\label{coordinate change}
\Psi_{m,n}( \bar{x} , \bar{y}, \bar{z}) :=
(\sigma^{-n}  \tilde{\sigma}^{-m} \bar{x}  +1,\
\sigma^{-2n} \tilde{\sigma}^{-2m} \bar{y} +\sigma^{-n},\
\sigma^{-n}\tilde{\sigma}^{-m} \bar{z} +1).
\end{equation}
Note that 
$\Psi_{m, n}(K)$ 
converges to the point of heterodimensional tangency $\tilde{Y}=(1, 0, 1)\in U_{Q}$  as $m,n\rightarrow \infty$.
This means that for any $K$ if $m, n$ are sufficiently large, 
we can define $\Psi_{m, n}$.

\item
The reparametrization 
$\mu_{m,n}:I\to \mathbb{R}^{3}$, 
$\mu=\mu_{m,n}(\bar\mu)$, 
is defined by 
\begin{equation}\label{mu-reparametrization}
\mu_{m,n}(\bar\mu) :=
(
-\tilde{\lambda}^{m} a_{1},\
\sigma^{-2n}\tilde{\sigma}^{-2m} \bar{\mu}+\sigma^{-n}-\tilde{\lambda}^{m} b_{1}, \
-\tilde{\lambda}^{m} c_{1}
),
\end{equation}
which converges to $\mathbf{0}$ as $m, n\rightarrow \infty$ 
where $I$ is a (fixed) closed interval. 

\item The sequence of parameter values
$\{ \nu_{m,n} \}$
is
given as
\begin{equation}\label{nu-reparametrization}
\nu_{m,n}
:=(
-\lambda^{n} \alpha_{1}-\zeta^{n} \alpha_{3}, \
\tilde{\sigma}^{-m}-\lambda^{n} \beta_{1},\
\tilde{\zeta}^{-m}-\lambda^{n} \gamma_{1}),
\end{equation}
which converges to $\mathbf{0}$ as $m, n\rightarrow \infty$.
\end{itemize}

Using  the renormalization  $(\Psi_{m,n}, \mu_{m,n}, \nu_{m,n})$,
the \emph{return map} by iterations of  $f_{\mu,\nu}$
near the heterodimensional tangency  $\tilde{Y}$
is defined by 
\begin{equation}\label{return map}
F_{m,n}(\bar{x}, \bar{y}, \bar{z}):=
(\Psi_{m,n}^{-1}|_{\Psi_{m, n}(K)})
\circ f^{N_{2}+m+N_{1}+n}_{\mu_{m,n}(\bar\mu),\nu_{m,n}}
\circ \Psi_{m,n}(\bar{x}, \bar{y}, \bar{z}),
\end{equation}
where $N_{1}$ and $N_{2}$ are the  constants in  (\ref{eq.transition2}) and (\ref{eq.transition1}), 
which are independent of $m,n$.
Note that  the domain of $F_{m, n}$ is $K$ and 
$K$ can be chosen arbitrarily large
by letting $m, n$ large.

\subsection{Proof of Theorem \ref{thm.main1}}\label{ss.Proof of Theorem 1.1}
In this subsection, we prove Theorem \ref{thm.main1}.
For that we fix 
 $\xi >0$ and 
select a sequence $(m_k, n_k) \subset \mathbb{N}^2$
such that $\gamma_{1}  a_{3}  \lambda^{n_k} \tilde{\zeta}^{m_k}$
converges to $\xi$ (recall Lemma \ref{lem.Neutrality}). 
Then
for those $\{F_{m_k, n_k} \}$ we get the convergence to the center unstable H\'enon-like family.

 \medskip
 
\noindent
\emph{Proof of (1)}. 
The claim  is obtained immediately
from (\ref{coordinate change})--(\ref{nu-reparametrization})
under the conditions for eigenvalues in  (\ref{eigenvalue condition 1}).

 \medskip
 
\noindent
\emph{Proof of (2)}.  
The proof
is done by calculating entries in the formula (\ref{return map})  using
 (\ref{eigenvalue condition 1}), (\ref{eigenvalue condition 2}),
 (\ref{eq.transition2})--(\ref{non-zero}) and
 $(\lambda, \tilde{\zeta})\in \mathcal{R}$ of Lemma \ref{lem.Neutrality}. 
 In the proof, we omit the subscript $k$ for 
 simplicity. Thus, for example, we write $F_{m, n}$ 
 in the sense of $F_{m_k, n_k}$.

Let us provide  step-by-step calculations to obtain entries in the formula of return map $F_{m,n}$.
By the coordinate change $\Psi_{m,n}$ of  (\ref{coordinate change}),
each $(\bar{x}, \bar{y}, \bar{z})\in \mathbb{R}^{3}$
is mapped to
$$\mathbf{x}_{0}:=
(\sigma^{-n}  \tilde{\sigma}^{-m} \bar{x}  +1,\
\sigma^{-2n} \tilde{\sigma}^{-2m} \bar{y} +\sigma^{-n},\
\sigma^{-n}\tilde{\sigma}^{-m} \bar{z} +1).
$$
Note that if $(\bar{x}, \bar{y}, \bar{z})$ is contained in  a compact domain and $m,n$ are sufficiently large, 
 $\mathbf{x}_{0}$ is close
to  the heterodimensional tangency  $\tilde Y=(1,0,1)$.

First, after $n$ iterations of $f_{\mu_{m,n}(\bar{\mu}),\nu_{m,n}}$, in other words, 
 the linear transformation (\ref{eq.linearization}),
 $\mathbf{x}_{0}$ moves  to  $f^{n}_{\mu_{m,n}(\bar{\mu}),\nu_{m,n}}(\mathbf{x}_{0})=(x_{n}, y_{n}+1, z_{n})$ where
$$
(x_{n}, y_{n}+1, z_{n})=
( \lambda^{n} \sigma^{-n}  \tilde{\sigma}^{-m} \bar{x}  + \lambda^{n},\
\sigma^{-n} \tilde{\sigma}^{-2m} \bar{y} +1,\
\zeta^{n}\sigma^{-n}\tilde{\sigma}^{-m} \bar{z} +\zeta^{n}).
$$
We write  $\tilde{\mathbf{x}}_{n}= (x_{n}, y_{n}, z_{n})$.
These entries  can be described by Landau notation as 
\begin{equation}\label{Landau-notation1}
x_{n}=O(\lambda^{n}),\
y_{n}=O(\sigma^{-n}\tilde{\sigma}^{-2m}),\
z_{n}=O(\zeta^{n}).
\end{equation} 
Note that this also implies that the point
$(x_{n}, y_{n}+1, z_{n})$ converges to $(0, 1, 0)$ when $n$ 
tends to $+\infty$. 
This guarantees that the points in 
$\Psi_{m, n}(\Delta)$ stay in $U_Q$ and then leave it by the transition map.

Thus, 
we apply the transition $f^{N_{1}}_{\mu_{m,n}(\bar{\mu}),\nu_{m,n}}$ defined  by
 (\ref{eq.transition2'}) 
  (where the parameter $\mu_{m,n}(\bar{\mu})$ is not yet involved).
Recalling the choice of $\nu$ in  (\ref{nu-reparametrization}) and the conditions 
$\beta _{3}=\gamma _{2}=\gamma _{3}=0$ in (\ref{tech-assumption1a}) 
we get 
the following formula  in the local coordinate of $U_{P}$,
\begin{align*}
& f^{N_{1}+n}_{\mu_{m,n}(\bar{\mu}),\nu_{m,n}}(\mathbf{x}_{0})=
\bigr( 1+\lambda^n \sigma^{-n}  \tilde{\sigma }^{-m} \alpha _1 \bar{x}
+\sigma^{-n}  \tilde{\sigma }^{-2 m} \alpha_2\bar{y}
+\zeta^n \sigma^{-n}  \tilde{\sigma }^{-m} \alpha _3 \bar{z}
+\tilde{H}_{1}(\tilde{\mathbf{x}}_{n}), \\
&\tilde{\sigma}^{-m}
+\lambda^n \sigma ^{-n} \tilde{\sigma}^{-m} \beta _1\bar{x}
+\sigma^{-n}  \tilde{\sigma}^{-2 m} \beta _2\bar{y}
+\tilde{H}_2 (\tilde{\mathbf{x}}_{n}),
\tilde{\zeta }^{-m}
+\lambda^n \sigma^{-n}  \tilde{\sigma }^{-m} \gamma _1 \bar{x}
+\tilde{H}_3(\tilde{\mathbf{x}}_{n})
 \bigr).
\end{align*} 
Note that from (\ref{eq.transition2}),
for each $i=1,2,3$, 
$\tilde{H}_i$ starts from a quadratic term.
Since $0<\lambda<\zeta$, 
the dominant terms in  $\tilde{H}_i$ are 
 $y_{n}=O(\sigma^{-n}\tilde{\sigma}^{-2m})$ and $z_{n}=O(\zeta^{n})$ in (\ref{Landau-notation1}).
 In fact,  
\begin{equation}\label{Landau-notation2}
\tilde{H}_i(\tilde{\mathbf{x}}_{n})=
O(\sigma^{-2n}\tilde{\sigma}^{-4m})+O(\zeta^{2n}).
\end{equation}  

By the next $m$ iterations of the linear transformation by (\ref{eq.linearization}) in $U_{P}$,
we have 
$f^{m+N_{1}+n}_{\mu,\nu_{m,n} }(\mathbf{x}_{0})=(x_{m}, 1+y_{m}, 1+z_{m})$
near $Y=(0,1,1)$ where
\begin{align*}
 & x_{m}
 =\tilde{\lambda }^m
+\lambda ^n \tilde{\lambda }^m \sigma ^{-n} \tilde{\sigma}^{-m} \alpha_1\bar{x}
+\tilde{\lambda}^m \sigma ^{-n}  \tilde{\sigma}^{-2 m} \alpha_2\bar{y}+\tilde{\lambda}^m \sigma^{-n} \tilde{\sigma }^{-m} \zeta^n\alpha _3\bar{z}
+\tilde{\lambda}^m \tilde{H}_1(\tilde{\mathbf{x}}_{n}),\\
 & 1+y_{m}
=1+\lambda ^n \sigma ^{-n}\beta_1 \bar{x}
+\sigma ^{-n} \tilde{\sigma}^{-m} \beta_2\bar{y}
+\tilde{\sigma}^m \tilde{H}_2(\tilde{\mathbf{x}}_{n}),\\
 & 1+z_{m}
 =1
+ \lambda ^n \sigma ^{-n} \tilde{\sigma }^{-m}  \tilde{\zeta }^m \gamma _1 \bar{x}
+\tilde{\zeta }^m \tilde{H}_3(\tilde{\mathbf{x}}_{n}).
\end{align*}
Put  $\mathbf{x}_{m}:=(x_{m}, y_{m}, z_{m})$.
From (\ref{eigenvalue condition 2}), (\ref{Landau-notation1}) and (\ref{Landau-notation2}),
we have the following:
\begin{equation}\label{Landau-notation3}
x_{m}= O(\tilde{\lambda}^{m}),\ 
y_{m}= O(\sigma^{-2n}\tilde{\sigma}^{-3m})+ O(\tilde{\sigma}^{m}\zeta^{2n}),\ 
z_{m}= O(\sigma^{-n}\tilde{\sigma}^{-m}).
\end{equation}
We can see that these three numbers converge to zero as $m, n \to +\infty$.
Indeed, the convergence of $x_m$ and $z_m$ are easy. 
The convergence of $y_m$ comes from the 
first condition of the eigenvalue condition (\ref{eigenvalue condition 2}),
This means that after $m$-times iteration, 
the point leaves $U_P$ by the transition map. 

By the transition $ f_{\mu_{m,n}(\bar{\mu}),\nu_{m,n}}^{N_{2}}$ 
(which does not depend on $\nu_{m,n}$), 
we have in the local coordinate 
 $(\hat{x}, \hat{y}, \hat{z})=
 f^{N_{2}+m+N_{1}+n}_{\mu_{m,n}(\bar{\mu}),\nu_{m,n}}(\mathbf{x}_{0})$ 
in  $U_{Q}$, where
\begin{flalign*}
\hat{x}
= &
 1
 +(\lambda^n   \tilde{\lambda }^m  \sigma ^{-n}\tilde{\sigma }^{-m} a_1 \alpha _1
 +\lambda ^n \sigma ^{-n}  a_2 \beta _1
 +\lambda ^n \sigma ^{-n}  \tilde{\sigma }^{-m}  \tilde{\zeta }^m a_3 \gamma _1) \bar{x} &\\
 &
 + (\tilde{\lambda }^m \sigma ^{-n}   \tilde{\sigma }^{-2m} a_1 \alpha _2
+\sigma ^{-n}  \tilde{\sigma }^{-m} a_2 \beta _2) \bar{y}
+ \tilde{\lambda }^m \sigma ^{-n} \tilde{\sigma }^{-m} \zeta ^n  a_1 \alpha _3  \bar{z} &\\
&
+\tilde{\lambda }^m a_1 \tilde{H}_1(\tilde{\mathbf{x}}_{n})+\tilde{\sigma }^m a_2 \tilde{H}_2(\tilde{\mathbf{x}}_{n})+\tilde{\zeta }^m a_3 \tilde{H}_3(\tilde{\mathbf{x}}_{n})+H_1(\mathbf{x}_{m}),&
\end{flalign*}
\begin{flalign*}
\hat{y}=
&
\sigma ^{-2 n} \tilde{\sigma }^{-2 m} \bar{\mu }
+ \sigma ^{-n} &\\
&
+  \lambda ^n  \tilde{\lambda }^m \sigma ^{-n}  \tilde{\sigma }^{-m} b_1 \alpha _1 \bar{x}
+ \tilde{\lambda }^m \sigma ^{-n}  \tilde{\sigma }^{-2 m} b_1 \alpha _2  \bar{y}
+  \tilde{\lambda }^m \sigma ^{-n}  \tilde{\sigma }^{-m}  \zeta ^n b_1 \alpha _3 \bar{z}&\\
&
+(\lambda ^{2 n} \sigma ^{-2 n}  \beta _1^2 b_2
+\lambda ^{2 n} \sigma ^{-2 n}   \tilde{\sigma }^{-2 m} \tilde{\zeta }^{2 m} \gamma _1^2 b_3
+\lambda ^{2 n} \sigma ^{-2 n}   \tilde{\sigma }^{-m} \tilde{\zeta }^m \beta _1 \gamma _1 b_4)\bar{x}^2 &\\
&
+\sigma ^{-2n}  \tilde{\sigma }^{-2m} \beta _2^2 b_2 \bar{y}^2
+(2 \lambda^n \sigma ^{-2 n} \tilde{\sigma }^{-m} \beta _1 \beta _2 b_2
+\lambda ^n \sigma ^{-2 n}  \tilde{\sigma }^{-2 m}  \tilde{\zeta }^m \beta _2 \gamma _1 b_4)  \bar{x} \bar{y} &\\
&
+\tilde{\lambda }^m b_1 \tilde{H}_1(\tilde{\mathbf{x}}_{n})
+(2 \lambda ^n \sigma ^{-n}  \tilde{\sigma }^m \beta _1 b_2 \bar{x}
+\lambda ^n \sigma ^{-n}  \tilde{\zeta }^m \gamma _1 b_4  \bar{x}
+2 \sigma ^{-n}  \beta _2 b_2  \bar{y}
 )\tilde{H}_2(\tilde{\mathbf{x}}_{n}) &\\
&
+(\lambda ^n \sigma ^{-n} \tilde{\zeta }^m \beta _1 b_4  \bar{x}
+2 \lambda ^n \sigma ^{-n}  \tilde{\sigma }^{-m}  \tilde{\zeta }^{2 m} \gamma _1 b_3 \bar{x}
+\sigma ^{-n}  \tilde{\sigma }^{-m}  \tilde{\zeta }^m \beta _2 b_4 \bar{y}
)  \tilde{H}_3(\tilde{\mathbf{x}}_{n}) &\\
&
+\tilde{\sigma }^{2 m} b_2 \tilde{H}_2(\tilde{\mathbf{x}}_{n})^2
+ \tilde{\sigma }^m b_4 \tilde{\zeta }^m \tilde{H}_2(\tilde{\mathbf{x}}_{n}) \tilde{H}_3(\tilde{\mathbf{x}}_{n})+\tilde{\zeta }^{2 m} b_3 \tilde{H}_3(\tilde{\mathbf{x}}_{n})^2+H_2(\mathbf{x}_{m}),&
\end{flalign*}
\begin{flalign*}
\hat{z}=
&
1
+(\lambda^n \tilde{\lambda }^m \sigma ^{-n}   \tilde{\sigma }^{-m} c_1 \alpha _1
+\lambda ^n \sigma ^{-n}  c_2 \beta _1) \bar{x} &\\
&
+(\tilde{\lambda }^m \sigma ^{-n}   \tilde{\sigma }^{-2 m} c_1 \alpha _2
+\sigma ^{-n}  \tilde{\sigma }^{-m} c_2 \beta _2 ) \bar{y}
+ \tilde{\lambda }^m \sigma^{-n}  \tilde{\sigma }^{-m} \zeta^n c_1 \alpha _3 \bar{z} &\\
&
+\tilde{\lambda }^m c_1 \tilde{H}_1(\tilde{\mathbf{x}}_{n})
+\tilde{\sigma }^m c_2 \tilde{H}_2(\tilde{\mathbf{x}}_{n})
+H_3(\mathbf{x}_{m}).&
\end{flalign*}

Finally, using the  inverse of  (\ref{coordinate change}),
we get that 
the  return map $F_{m,n}(\bar{x}, \bar{y}, \bar{z})=(\bar{x}_{1}, \bar{y}_{1}, \bar{z}_{1})$ defined by   (\ref{return map})
satisfies
\begin{equation}\label{mn-dependent return map1}
\begin{aligned}
\bar{x}_{1}
&
=
 (\lambda ^n  \tilde{\lambda }^m a_1 \alpha _1
+\lambda ^n \tilde{\sigma }^m a_2 \beta _1
+\lambda ^n  \tilde{\zeta }^m a_3 \gamma _1)\bar{x}
+
( \tilde{\lambda }^m \tilde{\sigma }^{-m} a_1 \alpha _2+ a_2 \beta _2)\bar{y}& \\
&
+
\tilde{\lambda }^m \zeta ^n   a_1 \alpha _3\bar{z}
+\tilde{\lambda }^m \sigma ^n \tilde{\sigma }^m a_1 \tilde{H}_1(\tilde{\mathbf{x}}_{n})
+\sigma ^n \tilde{\sigma }^{2 m} a_2 \tilde{H}_2(\tilde{\mathbf{x}}_{n})&\\
&
+ \sigma ^n  \tilde{\sigma }^m \tilde{\zeta }^m  a_3  \tilde{H}_3(\tilde{\mathbf{x}}_{n})
+\sigma ^n \tilde{\sigma }^m H_1(\mathbf{x}_{m}),&    
\end{aligned}
\end{equation}
\begin{equation}\label{mn-dependent return map2}
\begin{aligned}
\bar{y}_{1}
&
=\bar{\mu}
+\lambda ^n  \tilde{\lambda }^m \sigma ^n \tilde{\sigma }^m b_1 \alpha _1\bar{x}
+\tilde{\lambda }^m \sigma ^n  b_1 \alpha _2\bar{y}
+  \tilde{\lambda }^m \sigma ^n  \tilde{\sigma }^m \zeta ^n b_1 \alpha _3\bar{z}&\\
&
+(\lambda ^{2 n}  \tilde{\sigma }^{2 m} \beta _1^2 b_2
+\lambda ^{2 n}  \tilde{\zeta }^{2 m} \gamma _1^2 b_3
+\lambda ^{2 n}  \tilde{\sigma }^m \tilde{\zeta }^m  \beta _1 \gamma _1 b_4) \bar{x}^2
+ \beta _2^2 b_2 \bar{y}^2&\\
&
+(2 \lambda ^n  \tilde{\sigma }^m \beta _1 \beta _2 b_2
+\lambda ^n \tilde{\zeta }^m \beta _2 \gamma _1 b_4) \bar{x} \bar{y}
+\tilde{\lambda }^m \sigma ^{2 n}  \tilde{\sigma }^{2 m} b_1 \tilde{H}_1(\tilde{\mathbf{x}}_{n})&\\
&
+(2 \lambda ^n \sigma ^n \tilde{\sigma }^{3 m} \beta _1 b_2  \bar{x}
+\lambda ^n \sigma ^n \tilde{\sigma }^{2 m}  \tilde{\zeta }^m  \gamma _1 b_4 \bar{x}
+2 \sigma ^n  \tilde{\sigma }^{2 m} \beta _2 b_2  \bar{y}
) \tilde{H}_2(\tilde{\mathbf{x}}_{n})&\\
&
+(2 \lambda ^n \sigma ^n  \tilde{\sigma }^m \tilde{\zeta }^{2 m}\gamma _1 b_3  \bar{x}
+\lambda ^n \sigma ^n \tilde{\sigma }^{2 m} \tilde{\zeta }^m \beta _1 b_4  \bar{x}
+\sigma ^n   \tilde{\sigma }^m  \tilde{\zeta }^m \beta _2 b_4 \bar{y})\tilde{H}_3(\tilde{\mathbf{x}}_{n})&\\
&
+\sigma ^{2 n} \tilde{\sigma }^{4 m} b_2 \tilde{H}_2(\tilde{\mathbf{x}}_{n})^2
+\sigma ^{2 n}  \tilde{\sigma }^{3 m} \tilde{\zeta }^m b_4 \tilde{H}_2(\tilde{\mathbf{x}}_{n}) \tilde{H}_3(\tilde{\mathbf{x}}_{n})&\\
&
+\sigma ^{2 n}  \tilde{\sigma }^{2 m} \tilde{\zeta }^{2 m} b_3 \tilde{H}_3(\tilde{\mathbf{x}}_{n})^2
+\sigma ^{2 n} \tilde{\sigma }^{2 m} H_2(\mathbf{x}_{m}),&
\end{aligned}
\end{equation}
\begin{equation}\label{mn-dependent return map3}
\begin{aligned}
\bar{z}_{1}
&
=
(\lambda ^n  \tilde{\lambda }^m c_1 \alpha _1
+\lambda ^n  \tilde{\sigma }^m c_2 \beta _1)\bar{x}
+(\tilde{\lambda }^m \tilde{\sigma }^{-m} c_1 \alpha _2
+c_2 \beta _2) \bar{y}
+\tilde{\lambda }^m \zeta ^n  c_1 \alpha _3 \bar{z} &\\
&
+  \tilde{\lambda }^m \sigma ^n  \tilde{\sigma }^m c_1 \tilde{H}_1(\tilde{\mathbf{x}}_{n})
+\sigma ^n \tilde{\sigma }^{2 m} c_2 \tilde{H}_2(\tilde{\mathbf{x}}_{n})
+\sigma ^n \tilde{\sigma }^m H_3(\mathbf{x}_{m}).&
\end{aligned}
\end{equation}

Now, we check the convergence of
 (\ref{mn-dependent return map1})--(\ref{mn-dependent return map3})
as  $m, n$  tend to infinity.
First, we estimate  the higher order terms containing $\tilde{H}_i$ and $H_i$. 

\smallskip

$\bullet$ \emph{Higher order terms containing $\tilde{H}_1$, $\tilde{H}_2$, $\tilde{H}_3$}.
The coordinate $\bar{x}_{1}$ in  (\ref{mn-dependent return map1}) has three higher order terms containing $\tilde{H}_1, \tilde{H}_2$ and $\tilde{H}_3$ whose $(m,n)$-dependent 
 coefficients are  respectively
$$\tilde{\lambda }^m \sigma ^n \tilde{\sigma }^m,\
\sigma ^n \tilde{\sigma }^{2 m},\
 \sigma ^n  \tilde{\sigma }^m \tilde{\zeta }^m.$$
By condition (\ref{eigenvalue condition 1}), we have 
\begin{equation}\label{hot1}
0<\tilde{\lambda }^m \sigma ^n \tilde{\sigma }^m, \sigma ^n \tilde{\sigma }^{2 m}<
\sigma ^n  \tilde{\sigma }^m \tilde{\zeta }^m.
\end{equation}
As noted above, 
every $\tilde{H}_i$ has order 
 at least two. Thus,
 it is enough to check  the convergence to $0$ 
of   $\sigma ^n  \tilde{\sigma }^m \tilde{\zeta }^m \tilde{H}_i(\tilde{\mathbf{x}}_{n})$.
From (\ref{Landau-notation2}),  
\begin{equation}\label{hot2}
  \sigma ^n  \tilde{\sigma }^m \tilde{\zeta }^m \tilde{H}_i(\tilde{\mathbf{x}}_{n}) 
=O(\sigma^n  \tilde{\sigma}^m \tilde{\zeta }^m \zeta^{2n})
+
O(\sigma^{-n}  \tilde{\sigma}^{-3m} \tilde{\zeta }^m).
\end{equation} 
Since $ \vert m-nk -  \tilde{k} \vert<1$ by 
Lemma \ref{lem.Neutrality},  we have 
$$
\sigma^n  \tilde{\sigma}^m \tilde{\zeta}^m \zeta^{2n}<
 C (\sigma  \tilde{\sigma}^k \tilde{\zeta}^k \zeta^{2})^{n},\ 
 \sigma^{-n}  \tilde{\sigma}^{-3m} \tilde{\zeta}^m <
C^{\prime} (\sigma^{-1}  \tilde{\sigma}^{-3k} \tilde{\zeta}^{k})^{n} 
$$
where $C, C^{\prime}>0$ are constants independent of $m$ and $n$. 
Conditions (\ref{eigenvalue condition 2}) implies that 
the expression 
in (\ref{hot2}) converges to $0$ 
as $m, n\to +\infty$.

Moreover, 
 using  (\ref{eigenvalue condition 1})  and  (\ref{Landau-notation2}),
one can easily check that 
the $r$th order derivatives of 
$\sigma ^n  \tilde{\sigma }^m \tilde{\zeta }^m \tilde{H}_i(\tilde{\mathbf{x}}_{n})$ also 
converge to $0$ as $m, n\to +\infty$.
Thus, these higher order terms also converge to $0$ 
as $m, n\to +\infty$ in the $C^{r}$ topology 
on compact domains. 
\smallskip

We now evaluate  the expression of $\bar{y}_{1}$ in  (\ref{mn-dependent return map2}), which has
higher order terms containing $\tilde{H}_1, \tilde{H}_2, \tilde{H}_3, \tilde{H}_2^{2}, \tilde{H}_2\tilde{H}_3, \tilde{H}_3^{2}$.
Thus, 
let us see the convergences of terms containing $\tilde{H}_1$, $\tilde{H}_2$ and $\tilde{H}_3$.
 It is enough to study 
the following $(m,n)$-dependent terms
$$\tilde{\lambda }^m \sigma ^{2 n}  \tilde{\sigma }^{2 m} \tilde{H}_i, \
\lambda^n \sigma ^n \tilde{\sigma }^{3 m}  \tilde{H}_i, \
 \sigma^n  \tilde{\sigma }^{2 m}  \tilde{H}_i, \
\lambda^n \sigma^n  \tilde{\sigma}^m \tilde{\zeta}^{2 m}  \tilde{H}_i.$$
Note that the estimate  of 
$\sigma^n  \tilde{\sigma }^{2 m}\tilde{H}_i$ was  already done, see  (\ref{hot1}). 
Moreover, since  
$\lambda ^n \sigma ^n  \tilde{\sigma }^m \tilde{\zeta }^{2 m}
=(\lambda ^n\tilde{\zeta }^{m})\sigma ^n  \tilde{\sigma }^m \tilde{\zeta }^{m}$ 
and 
 $\lambda ^n\tilde{\zeta }^{m}$ converges to a constant by Lemma \ref{lem.Neutrality}, 
the convergence of
$\lambda^n \sigma^n  \tilde{\sigma}^m \tilde{\zeta}^{2 m} \tilde{H}_i$ follows from the discussion  above, 
see
(\ref{hot2}). 
So we have only to check the 
convergence of
$\tilde{\lambda }^m \sigma ^{2 n}  \tilde{\sigma }^{2 m} \tilde{H}_1(\tilde{\mathbf{x}}_{n})$
and
$\lambda ^n \sigma ^n \tilde{\sigma }^{3 m} \tilde{H}_2(\tilde{\mathbf{x}}_{n})$.
By (\ref{Landau-notation2}), \begin{align*}
& 
\tilde{\lambda }^m \sigma ^{2 n}  \tilde{\sigma }^{2 m} \tilde{H}_1(\tilde{\mathbf{x}}_{n}) 
=O(\tilde{\lambda }^m\tilde{\sigma}^{-2m})
+O( \tilde{\lambda }^m \sigma ^{2 n}  \tilde{\sigma }^{2 m}  \zeta^{2n}),\\
& 
\lambda ^n \sigma ^n \tilde{\sigma }^{3 m} \tilde{H}_2(\tilde{\mathbf{x}}_{n}) 
=O(\lambda ^n \sigma ^{-n} \tilde{\sigma }^{-m})
+O(\lambda ^n \sigma ^n \tilde{\sigma }^{3 m} \zeta^{2n}).
\end{align*}
By a similar discussion as above, using
$ \vert m-nk -  \tilde{k} \vert<1$ of Lemma \ref{lem.Neutrality}
and (\ref{eigenvalue condition 2}), 
these terms also converge to $0$ as $m, n\to +\infty$. 

Moreover, same as in the case of $\bar{x}_{1}$, one can check 
the convergence of $r$th order derivatives of $\tilde{\lambda }^m \sigma ^{2 n}  \tilde{\sigma }^{2 m} \tilde{H}_1(\tilde{\mathbf{x}}_{n})$
and
$\lambda ^n \sigma ^n \tilde{\sigma }^{3 m} \tilde{H}_2(\tilde{\mathbf{x}}_{n})$.
Hence, these higher order terms converge to $0$ 
as $m, n\to +\infty$ in the $C^{r}$ topology.

\smallskip

It remains to check the 
convergence of $(m,n)$-dependent terms 
involving 
 $\tilde{H}_1$ and $\tilde{H}_2$ in the
expression of $\bar{z}_{1}$.
These estimations follow from  (\ref{hot1}). Thus, 
we finish checking the convergence for higher order terms containing 
$\tilde{H}_1$, $\tilde{H}_2$ and $\tilde{H}_3$. 
\smallskip

$\bullet$ \emph{Higher order terms containing $H_1$, $H_2$, $H_3$}.
We start by evaluating the expressions of $\bar x_1$ and $\bar z_1$.
From  (\ref{eq.transition1}), $H_1$ and $H_3$ start from degree two
terms. 
Hence, the orders of $H_1$ and $H_3$ are dominated by 
those of $x_{m}^{2}, y_{m}^{2}, z_{m}^{2}, x_{m}y_{m}, y_{m}z_{m}, z_{m}x_{m}$, 
which can be evaluated directly from (\ref{Landau-notation3}). 
 In this way,  one can check 
the terms containing $H_1$ and $H_3$ in the expressions of $\bar{x}_{1}$ 
and  $\bar{z}_{1}$ 
in  (\ref{mn-dependent return map1}) and (\ref{mn-dependent return map3}) 
as follows:  for $i=1,3$, 
$$
\sigma^{n}\tilde{\sigma}^{m}H_i(\mathbf{x}_{m})
=
O(
\tilde{\lambda}^{2m}\sigma^{n}\tilde{\sigma}^{m}
) +
O(\sigma^{-3n}\tilde{\sigma}^{-5m})+
O(
\sigma^{n}\tilde{\sigma}^{3m}\zeta^{4n}
)+ O(\sigma^{-n}\tilde \sigma^{-m}).
$$
This converges to $0$ under the conditions  (\ref{eigenvalue condition 1})-(\ref{eigenvalue condition 2}) 
as $m, n\to \infty$. Moreover,  this and   (\ref{Landau-notation3}) imply that 
the $r$th order derivatives of $\sigma^{n}\tilde{\sigma}^{m}H_i(\mathbf{x}_{m})$ also 
converge to $0$ as $m, n\to +\infty$.

Next, we evaluate the
expressions of $\bar{y}_{1}$ which only contains
 terms  $H_2$. 
From (\ref{eq.transition1}),
$H_2(\mathbf{x}_{m})$ 
starts from the terms of
$x_{m}^{2}, x_{m}y_{m}$ and $z_{m}x_{m}$. 
From (\ref{Landau-notation3}) with (\ref{eigenvalue condition 2}), 
we only  have to check the  order of  terms for $x_{m}y_{m}$ and $z_{m}x_{m}$.
Note that from (\ref{Landau-notation3}),
$$
\sigma ^{2n} \tilde{\sigma }^{2m}  x_{m}y_{m}=
O(\tilde{\lambda}^{m}\tilde{\sigma}^{-m})+O(\tilde{\lambda}^{m}\sigma^{2n}\tilde{\sigma}^{3m}\zeta^{2n}),\ 
\sigma^{2n} \tilde{\sigma}^{2m}  z_{m}x_{m}=O(\tilde{\lambda}^{m}\sigma^{n}\tilde{\sigma}^{m}),
$$
both of which  converge to $0$ under  (\ref{eigenvalue condition 1})--(\ref{eigenvalue condition 2}) 
as $m, n\to \infty$.
This implies that $\sigma ^{2n} \tilde{\sigma}^{2m}  H_{2}$ 
and its  $r$th order derivatives
tend to $0$ as $m, n\to +\infty$.

\smallskip
$\bullet$ \emph{Conclusion.}
Recall that  
$\lambda^{n} \tilde{\zeta}^{m}$ converges to  $(\gamma_{1}  a_{3})^{-1}\xi$.
Hence,  due to the above evaluations, 
(\ref{mn-dependent return map1})--(\ref{mn-dependent return map3})  converge to
the following
\begin{equation}\label{intact-limit return map}
(\bar{x}, \bar{y},  \bar{z})\longmapsto
(
\xi \bar{x}+a_{2}\beta_{2}\bar{y},\
\bar{\mu}+b_{3}(\xi a_{3}^{-1})^{2}\bar{x}^{2}+ \beta_{2}^2  b_{2}\bar{y}^2
+\xi a_{3}^{-1}  \beta_{2} b_{4} \bar{x}\bar{y},\
c_{2}\beta_{2}\bar{y}
)
\end{equation}
in the $C^{r}$ topology on compact domains.
\smallskip

Finally, to improve the appearance of (\ref{intact-limit return map}),
 consider the next coordinate change 
\begin{equation}\label{coordinate change1}
\Theta: (\bar{x}, \bar{y}, \bar{z}, \bar{\mu})\longmapsto
 (\beta _2 a_2^{-1} b_2 \bar{x},\
 \beta _2^2 b_2 \bar{y},\
  \beta _2c_2^{-1} b_2 \bar{z},\
  \beta _2^2 b_2\bar{\mu})=:(\tilde{x}, \tilde{y}, \tilde{z}, \tilde{\mu}).
\end{equation}
By $\Theta$, (\ref{intact-limit return map})   is conjugate to
\begin{equation}\label{alt-limit return map}
(\tilde{x}, \tilde{y},  \tilde{z})\longmapsto
(
\xi  \tilde{x} +\tilde{y},\
\tilde{\mu} + \tilde{y}^{2}
+
\kappa_{1} \tilde{x}^{2}
+
\kappa_{2} \tilde{x} \tilde{y},\
\tilde{y}
),
\end{equation}
where (remembering the condition  (\ref{non-zero}))
\begin{equation}\label{kappa-conditions}
\kappa_{1}=(\xi a_2 a_3^{-1})^{2} b_2^{-1} b_3,\quad
\kappa_{2}=\xi a_2 (a_3 b_2)^{-1} b_4.
\end{equation}
This ends  the proof of Theorem \ref{thm.main1}-(3).
\hfill{$\square$}

\section{Robust connections between saddles of different indices}  \label{s.proof-of-last-thm}
In this section, we prove Theorem~\ref{thm.main3}, that is, 
we give the perturbation to obtain the connection between the blender and the saddle $Q$.

\subsection{Further perturbation}
Let  $\{f_{\mu,\nu}\} = \{f_{\mu_k(\bar{\mu}), \nu_k}  \}$ be the family in Theorem \ref{thm.main1}.
We consider another additional $n$-dependent perturbation $\{g_n\}=\{g_{n_{k}}\}$ of $\{ f_{\mu,\nu} \}$ 
in a small neighborhood of $X=(0,1,0)$ in $U_{Q}$ to show Theorem \ref{thm.main3}. 
In the following,  to simplify the notations,
we again drop the subscript $k$ of $m_k$ and $n_k$.

Let us consider a smooth bump function $b$ satisfying the following:
\begin{itemize}
\item $b(t) = 0$ for $|t| > 1/2$.
\item $b(t) = 1$ for $|t| < 1/3$.
\item $0 \leq  b(t) \leq 1$.
\end{itemize}
Then, given $n > 0$, consider the functions 
$B, B_n : \mathbb{R}^3 \to \mathbb{R}$ defined as follows:
\begin{itemize}
\item $B(x, y, z) := b(x) b(y) b(z)$,
\item $B_n(x, y, z) := \lambda^{n} B(x/\zeta^{n}, y/\zeta^{n}, z/\zeta^{n})$,
\end{itemize}
and an $n$-dependent neighborhood of $X=(0,1,0)$ defined as follows:
$$U_{X, n} =U_{X} =\left\{
(x,1+y,z)\ \vert \
|x|, |y|, |z|<\zeta^{n}/2.
\right\}.
$$
Note that $U_X$ converges to $\{ X\}$ as $n \to +\infty$.
We define 
 $\theta_{n} : M \to M$ as follows:
\begin{itemize}
\item if $(x,1+y,z)\in U_{X}$,
$$
\theta_{n}(x,1+y,z):=(x,1+y,z)+(B_{n}(x,y,z), 0, 0);
$$
\item otherwise, $\theta_{n}$ is the identity map.
\end{itemize}
Finally, we define
\begin{equation}\label{perturbation}
g_n:= \theta_{n}\circ  f_{\mu_k(\bar{\mu}), \nu_k}.
\end{equation}

For this perturbation, we prove the following:
\begin{lemma}
Suppose $\lambda/ \zeta^{1+\alpha} < 1$. 
Then the $C^{1+\alpha}$ distance 
between $g_{n_k}$ and 
$f_{\mu, \nu} = f_{\mu_k(\bar{\mu}), \nu_k}$ goes to zero as $k$ (hence $n_k$)
tends to $+\infty$.
\end{lemma}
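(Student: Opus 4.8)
The plan is to write $g_{n_k}-f_{\mu,\nu}=(\theta_{n_k}-\mathrm{id})\circ f_{\mu_k(\bar\mu),\nu_k}$ and reduce the estimate of the $C^{1+\alpha}$ distance to a scaling computation for the one nontrivial component of $\theta_n-\mathrm{id}$. First I would record that, by Theorem~\ref{thm.main1} item~(1), the parameters $(\mu_k(\bar\mu),\nu_k)$ converge to $\mathbf 0$, so the diffeomorphisms $f_{\mu_k(\bar\mu),\nu_k}$ converge to $f=f_{\mathbf 0,\mathbf 0}$ in the $C^{r}$ topology with $r\ge 2$; in particular $\|Df_{\mu_k(\bar\mu),\nu_k}\|_{C^0}$ and the $\alpha$-H\"older seminorm $[Df_{\mu_k(\bar\mu),\nu_k}]_{C^\alpha}$ (finite since $f_{\mu_k(\bar\mu),\nu_k}\in C^2$ and $M$ is compact) are bounded uniformly in $k$. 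Writing $\varepsilon_n:=\theta_n-\mathrm{id}$ (which is globally $C^{r}$, since $B_n$ and all its derivatives vanish on $\partial U_{X}$), the chain rule gives $D(g_n-f_{\mu,\nu})(p)=D\varepsilon_n(f_{\mu,\nu}(p))\,Df_{\mu,\nu}(p)$, and a standard Leibniz/H\"older-composition estimate bounds $\|g_n-f_{\mu,\nu}\|_{C^{1+\alpha}}$ by a constant, depending only on the uniform $C^{1+\alpha}$ size of $f_{\mu,\nu}$ and hence independent of $k$, times $\|\varepsilon_n\|_{C^{1+\alpha}}$. So it suffices to prove $\|\varepsilon_n\|_{C^{1+\alpha}}\to 0$.

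Next I would compute $\|\varepsilon_n\|_{C^{1+\alpha}}$ directly. By construction $\varepsilon_n$ vanishes outside $U_{X}$, and on $U_{X}$ only its first coordinate is nonzero, equal to $\phi_n(x,y,z):=B_n(x,y,z)=\lambda^{n}B(x/\zeta^{n},y/\zeta^{n},z/\zeta^{n})$, where $B=b(x)b(y)b(z)$ is a fixed smooth bump function. Hence
\[
\|\phi_n\|_{C^0}=\lambda^{n}\|B\|_{C^0},\qquad
\|D\phi_n\|_{C^0}=\lambda^{n}\zeta^{-n}\|DB\|_{C^0}=\Bigl(\tfrac{\lambda}{\zeta}\Bigr)^{n}\|DB\|_{C^0},
\]
and, since $D\phi_n(p)=\lambda^{n}\zeta^{-n}(DB)(p/\zeta^{n})$,
\[
\frac{|D\phi_n(p)-D\phi_n(q)|}{|p-q|^{\alpha}}
=\lambda^{n}\zeta^{-n}\,\frac{|(DB)(p/\zeta^{n})-(DB)(q/\zeta^{n})|}{|p-q|^{\alpha}}
\le \lambda^{n}\zeta^{-n(1+\alpha)}\,[DB]_{C^\alpha}
=\Bigl(\frac{\lambda}{\zeta^{1+\alpha}}\Bigr)^{n}[DB]_{C^\alpha}.
\]

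Finally I would combine the three bounds: since $0<\lambda<\zeta<1$ one has $\lambda<1$ and $\lambda/\zeta<1$, so the first two quantities tend to $0$, while the hypothesis $\lambda/\zeta^{1+\alpha}<1$ forces the H\"older seminorm of $D\phi_n$ to tend to $0$ as well; therefore $\|\varepsilon_n\|_{C^{1+\alpha}}\to 0$ and, by the reduction of the first paragraph, $\|g_{n_k}-f_{\mu_k(\bar\mu),\nu_k}\|_{C^{1+\alpha}}\to 0$ as $k\to+\infty$. The only step needing a little care is the composition estimate, which is routine once the $C^{r}$-convergence $f_{\mu_k(\bar\mu),\nu_k}\to f$ is used to bound the relevant norms uniformly in $k$; the genuine content is the scaling computation for $[D\phi_n]_{C^\alpha}$, whose exponent $\lambda\zeta^{-(1+\alpha)}$ is precisely what makes the hypothesis $\lambda/\zeta^{1+\alpha}<1$ (equivalently $\alpha<\log|\lambda|/\log|\zeta|-1$) both necessary and sufficient here, and explains why the $C^{1+\alpha}$-approximation cannot in general be upgraded to a $C^{2}$ one.
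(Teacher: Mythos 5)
Your proposal is correct and follows essentially the same route as the paper: the paper likewise dismisses the $C^0$ and $C^1$ convergence with the obvious scaling $\lambda^n$ and $(\lambda/\zeta)^n$, and then isolates the $\alpha$-H\"older seminorm of a partial derivative $(B_n)_x$ to obtain the factor $(\lambda/\zeta^{1+\alpha})^n$ that makes the hypothesis $\lambda/\zeta^{1+\alpha}<1$ enter. The only difference is that you make explicit the reduction $g_n - f_{\mu,\nu} = (\theta_n - \mathrm{id})\circ f_{\mu,\nu}$ together with the uniform-in-$k$ bound on $\|f_{\mu_k(\bar\mu),\nu_k}\|_{C^{1+\alpha}}$ coming from the $C^r$-convergence $f_{\mu_k(\bar\mu),\nu_k}\to f$, a step the paper leaves implicit.
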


Note that the condition $\lambda/ \zeta^{1+\alpha} < 1$ is 
equivalent to
$\alpha < \frac{\log\lambda}{\log \zeta} -1$.

\begin{proof}
The convergence in the  $C^0$ topology is easy to see.
To see the $C^{1+\alpha}$  convergence, 
we only need to check the 
$C^{1+\alpha}$ smallness of each partial derivative of $B_n$ for large $n$.
By the symmetry, we only confirm for $(B_n)_x$ 
(partial derivative of $B_n$ with respect to $x$)
and omit the check for $(B_n)_y$ and $(B_n)_z$.

The partial derivative $(B_n)_x$ is given as follows:
\[
(B_n)_x =
(B_n)_x (x,y,z):=
\left( \frac{\lambda}{\zeta} \right)^n 
b'\! \left(\frac{x}{\zeta^{n}}\right) 
b\! \left(\frac{y}{\zeta^{n}}\right)
b\! \left(\frac{z}{\zeta^{n}}\right).
\]

Then, since 
$\lambda/\zeta  < \lambda/\zeta^{1+\alpha} < 1$, 
we can see the the $C^1$ smallness of this function for large $n$.

Let us confirm the smallness of $\alpha$-H\"{o}lder constant for large $n$.
Given $(x_0, y_0, z_0)$, $(x, y, z ) \in \mathbb{R}^3$,  we have 
\begin{align*}
&\frac{(B_n)_x(x_0+x, y_0 +y, z_0+z) - (B_n)_x(x_0, y_0, z_0) }{|(x, y, z)|^{\alpha}} \\
=&\left( \frac{\lambda}{\zeta^{1+\alpha}} \right)^n 
\frac{b' \!  \left( \frac{x_0+x}{\zeta^{n}}\right) 
b\! \left( \frac{y_0+y}{\zeta^{n}}\right)
b\!  \left( \frac{z_0+z}{\zeta^{n}}\right)
-b' \! \left( \frac{x_0}{\zeta^{n}}\right) 
b\!  \left( \frac{y_0}{\zeta^{n}}\right)
b\!  \left( \frac{z_0}{\zeta^{n}}\right)
}{|(x/\zeta, y/\zeta, z/\zeta)|^{\alpha}}.
\end{align*}
In the last formula,
$(\lambda/\zeta^{1+\alpha})^n$ 
converges to zero as $n \to +\infty$ if $(\lambda/\zeta^{1+\alpha}) <1$,
and the absolute value of the rest of the formula is bounded by 
the H\"{o}lder constant of $b'(x)b(y)b(z)$,
which does not depend on $n$ and $(x_0, y_0, z_0)$.
This shows that the $\alpha$-H\"{o}lder constant of 
$|(B_n)_x|$ converges to $0$ as $n$ tends to $+\infty$.
\end{proof}

This perturbation may give some effect on points in the  renormalizations. 
However, we can prove the following:
\begin{lemma}\label{l.boxpert}
The perturbation $\theta_n$ does not give any effect on the 
first return map of the renormalization. More precisely, for every $X \in \Phi_{m, n}(\Delta)$, we have 
$(g_n)^{i}(X) = (f_{\mu, \nu})^i(X)$ for $i \in [0, N_2+m+N_1+n]$.
\end{lemma}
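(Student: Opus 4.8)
The plan is to show that, for every point $p$ in the renormalization domain $\Psi_{m,n}(\Delta)$ (where $\Psi_{m,n}$ is the coordinate change of Section~\ref{s.Proof of Theorem1}), the $f_{\mu,\nu}$-orbit $p, f_{\mu,\nu}(p),\dots,f_{\mu,\nu}^{\,N_2+m+N_1+n}(p)$ never meets the support of $\theta_n$; once this is established the statement follows by a one-step induction. Indeed, with $g_n=\theta_n\circ f_{\mu,\nu}$, if $g_n^{\,i-1}(p)=f_{\mu,\nu}^{\,i-1}(p)$ and $f_{\mu,\nu}^{\,i}(p)\notin\mathrm{supp}(\theta_n)$, then $g_n^{\,i}(p)=\theta_n\!\bigl(f_{\mu,\nu}^{\,i}(p)\bigr)=f_{\mu,\nu}^{\,i}(p)$. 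The key observation about the support is that $\mathrm{supp}(\theta_n)$ is contained both in the shrinking box $U_{X}$ and in the slab $\{\,|z|\le\zeta^{n}/2\,\}$, the latter because $B_n(x,y,z)$ already vanishes when $|z/\zeta^{n}|>1/2$.

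Next I would cut the orbit segment into the four stages of the return itinerary from Section~\ref{s.Proof of Theorem1}: (a) the first $n$ iterates, which stay in $U_Q$ and are given by the linear map there; (b) the transition $f^{N_1}$ along the orbit of $X$; (c) the next $m$ iterates, which stay in $U_P$; (d) the transition $f^{N_2}$ along the orbit of $Y$. For stage (a), the computation in the proof of Theorem~\ref{thm.main1} gives that the third coordinate of $f_{\mu,\nu}^{\,i}(p)$ equals $\zeta^{i}\bigl(1+\sigma^{-n}\tilde\sigma^{-m}\bar z\bigr)$ for $0\le i\le n$; as $\bar z$ runs over a fixed bounded interval and $\zeta^{i}\ge\zeta^{n}$, this quantity exceeds $\zeta^{n}/2$ for $n$ large, so stage (a) misses the slab and hence $\mathrm{supp}(\theta_n)$. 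For stage (c) the orbit lies in $U_P$, which is disjoint from $U_Q\supseteq U_{X}$ for $n$ large (since $U_X$ collapses to the point $X\in U_Q$), so again it misses $\mathrm{supp}(\theta_n)$.

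For stages (b) and (d) I would use that $f_{\mu,\nu}^{\,n}(p)\to X$ and $f_{\mu,\nu}^{\,n+N_1+m}(p)\to Y$ uniformly in $p\in\Psi_{m,n}(\Delta)$ as $n\to\infty$ (again from the explicit formulas), so by continuity of $f$ the iterates $f_{\mu,\nu}^{\,n+j}(p)$ $(1\le j\le N_1)$ and $f_{\mu,\nu}^{\,n+N_1+m+j}(p)$ $(1\le j\le N_2)$ converge to $f^{j}(X)$ and $f^{j}(Y)$ respectively. The points $f^{j}(X)$ $(1\le j\le N_1)$ and $f^{j}(Y)$ $(1\le j\le N_2)$ form a finite set, each member distinct from $X$ — because $X$ is not periodic and $X,Y$ sit on different heteroclinic orbits (if $f^{j}(Y)=X$ then $X\in W^u(P)\cap W^s(P)\cap W^u(Q)$, forcing $P=Q$) — hence they stay a fixed positive distance from $X$, while $\mathrm{diam}\,U_{X}=O(\zeta^{n})\to0$. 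Thus for $n$ large these iterates lie outside $U_{X}\supseteq\mathrm{supp}(\theta_n)$. Combining the four stages, for all $k$ large the whole orbit segment avoids $\mathrm{supp}(\theta_{n_k})$, and the induction gives $(g_{n_k})^{i}(p)=(f_{\mu,\nu})^{i}(p)$ on $\Psi_{m_k,n_k}(\Delta)$ for $0\le i\le N_2+m_k+N_1+n_k$.

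The only place where a genuine estimate, rather than a soft topological argument, enters is stage (a), and it is precisely the estimate the construction was rigged to satisfy: the bump $B_n$ was rescaled by $\zeta^{n}$ in every variable so that its support is a box of exactly the exponential scale $\zeta^{n}$ that the orbit's third coordinate reaches at the $n$-th iterate, and the factor $1$ versus $1/2$ leaves the orbit comfortably above it. Everything else — disjointness of the linearizing charts $U_P$, $U_Q$, continuity of $f$, and non-periodicity of the heteroclinic points — is qualitative, so I do not expect any real obstacle.
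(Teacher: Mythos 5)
Your proof is correct and follows essentially the same approach as the paper's: the key estimate, showing the $z$-coordinate of the $n$-th iterate (and, in your version, all iterates $i\le n$ in $U_Q$) exceeds $\zeta^{n}/2$ and hence lies outside the shrinking support of $\theta_n$, is exactly the calculation the paper carries out. The paper simply asserts that the $n$-th iterate is the only place the orbit could meet the support, while you make this explicit via stages (b)--(d) (the transition orbit segments are finite, bounded away from $X$, and $U_X$ collapses); this added care is sound but does not change the underlying idea.
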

In particular, the blender of $f_{\mu, \nu}$ is not affected  by the perturbation. 
Thus $g_n$ also has the same blender. 
\begin{proof}
Take $X \in \Phi_{m, n}(\Delta)$. The only possibility where $(g_n)^i(X)$ may 
get some effect from the perturbation is that $(g_n)^{n}(X)$ is contained in the 
support of the perturbation $U_{X}$. Thus, let us calculate the position of $(g_n)^{n}(X)$.

A point $X \in \Phi_{m, n}(\Delta)$ has the following form:
\[
(\sigma^{-n}\tilde{\sigma}^{-m}x +1, \sigma^{-2n}\tilde{\sigma}^{-2m}y + \sigma^{-n}, 
\sigma^{-n}\tilde{\sigma}^{-m}z +1),
\]
where the point $(x, y, z)$ is chosen from some compact domain.
Hence, the coordinate of $(f_{\mu, \nu})^n(X)$ is given as 
$$( \lambda^{n} \sigma^{-n}  \tilde{\sigma}^{-m} x  + \lambda^{n},\
\sigma^{-n} \tilde{\sigma}^{-2m} y +1,\
\zeta^{n}\sigma^{-n}\tilde{\sigma}^{-m} z +\zeta^{n}).$$

We show that, if $n$ is sufficiently large, then
the $z$-coordinate of this point is greater than $\zeta^n/2$, 
which is the $z$-coordinate of the boundary of $U_X$. This implies that 
the point $(g_n)^{n}(X)$ is outside the support of the perturbation.

To see this, we take the quotient of these two quantities:
$$
\frac{\zeta^n /2}{\zeta^{n}\sigma^{-n}\tilde{\sigma}^{-m} z +\zeta^{n}}
 = \frac{1}{2(\sigma^{-n}\tilde{\sigma}^{-m} z +1)}.
$$
If $z$ is bounded, then by taking sufficiently large $m$ and $n$, 
one can check that the last term lies in the interval $(0, 1)$, which 
completes the proof.
\end{proof}

\subsection{Proof of Theorem \ref{thm.main3}}

The perturbation above keeps 
$Q_{g}=Q$ intact.
Meanwhile, 
the local unstable manifold  of $Q$ is no longer a straight line in $U_{Q}$.
Indeed, $W_{\loc}^{u}(Q)$ has a bumped sub-arc near $X$ which contains
the vertical part given as
$$\ell=\ell_{m,n}:=\{(\lambda^{n}, 1+\sigma^{-n}\tilde{\sigma}^{-2m} y, 0) \ \vert\ 
|y|<4\beta_2^2b_2 \},$$
see Figure \ref{fig7}.
Let us consider another segment
$$
\hat{\ell}=\hat{\ell}_{m,n}:=\left\{
\Phi_{m,n}(0,y,0) \ \vert\  |y|<4
\right\},
$$
which is contained in the boundary of the box
\begin{equation}\label{Delta-m-n}
\Delta_{m,n}:=\Phi_{m,n}(\Delta), 
\end{equation}
where $\Phi_{m,n}:=\Psi_{m,n}\circ \Theta\circ \tilde{\Theta}$, see 
 (\ref{coordinate change}),   (\ref{coordinate change1}) and (\ref{coordinate change2}) for definitions
and
$\Delta$ is the box given in (\ref{parameters/box for blenders}).
In the next proposition,
we compare the $g^{N_2 + m + N_1}$-images of $\ell$ and $\hat{\ell}$.
\begin{figure}[hbt]
\centering
\scalebox{0.82}{\includegraphics[clip]{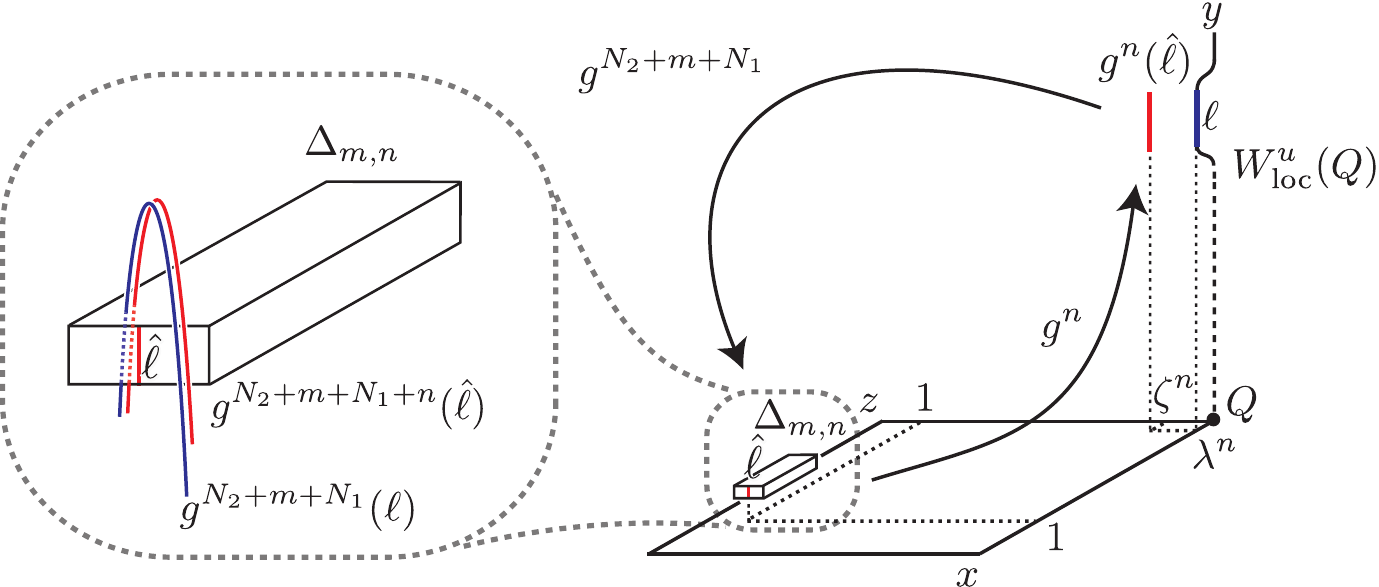}}
\caption{}
\label{fig7}
\end{figure}

\begin{proposition}\label{prop.sec5-prop1}
The $C^1$ distance between the
$(g^{N_{2}+m+N_{1}})$-images of $\ell$ and that of $g^{n}(\hat{\ell})$,
measured in the $\Phi_{m, n}$-coordinate, 
becomes arbitrarily small by letting $k$ (hence $(m, n)$) large (remember  that $k$ is the subscript for  renormalizations in Theorem \ref{thm.main1}).
\end{proposition}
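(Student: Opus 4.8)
The plan is to write $\ell$ and $g^{n}(\hat\ell)$ explicitly, notice that they differ only by a $z$-translation of size $\zeta^{n}$, and then feed this small discrepancy through the return dynamics, which are already understood through Theorem~\ref{thm.main1}.

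First I would compute $g^{n}(\hat\ell)$. By Lemma~\ref{l.boxpert} it equals $f_{\mu,\nu}^{n}(\hat\ell)$, and since $\hat\ell=\{\Phi_{m,n}(0,y,0)\mid|y|<4\}$ lies within $O(\sigma^{-n})$ of $\tilde Y=(1,0,1)$, where $f_{\mu,\nu}$ is linear, a direct computation gives
\[
g^{n}(\hat\ell)=\bigl\{\,(\lambda^{n},\ 1+\sigma^{-n}\tilde\sigma^{-2m}\beta_{2}^{2}b_{2}\,y,\ \zeta^{n})\ \bigm|\ |y|<4\,\bigr\},
\]
which, up to the affine reparametrisation $y\mapsto\beta_{2}^{2}b_{2}y$ in $\ell$, is exactly $\ell+(0,0,\zeta^{n})$. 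Equivalently, in the bookkeeping of Section~\ref{ss.Proof of Theorem 1.1}, $\ell$ corresponds to the same data as $g^{n}(\hat\ell)$, namely $\bar x=\bar z=0$, except that the third coordinate of the intermediate point $\tilde{\mathbf x}_{n}$ is $z_{n}=0$ instead of $z_{n}=\zeta^{n}$. Next, for both curves the image under $f_{\mu,\nu}$ leaves $U_{X}$ (its $y$-coordinate becomes $\approx\sigma\ne1$) and the subsequent orbit --- following the heteroclinic orbit from $X$ to $\tilde X$, then passing through $U_{P}$, then along the transition to $\tilde Y$ --- stays away from $U_{X}$; hence the perturbation $\theta_{n}$ is inactive, and $g^{N_{2}+m+N_{1}}$ acts as $f_{\mu,\nu}^{N_{2}+m+N_{1}}$ on both. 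In particular $g^{N_{2}+m+N_{1}}(g^{n}(\hat\ell))=f_{\mu,\nu}^{N_{2}+m+N_{1}+n}(\hat\ell)$, whose $\Phi_{m,n}$-expression is the renormalised return map applied to the segment $\{(0,y,0)\mid|y|<4\}$; by Theorem~\ref{thm.main1}(2) this converges in $C^{r}$ to the image of that segment under the H\'enon-like limit map. It therefore remains to show that $g^{N_{2}+m+N_{1}}(\ell)$, measured in the $\Phi_{m,n}$-coordinate, converges in $C^{1}$ to the same curve.

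For this I would re-run the computation of \eqref{mn-dependent return map1}--\eqref{mn-dependent return map3} with the input of $\ell$ and compare term by term. The decisive point is structural: because $\beta_{3}=\gamma_{2}=\gamma_{3}=0$ in \eqref{tech-assumption1a}, replacing $z_{n}=\zeta^{n}$ by $z_{n}=0$ changes, to leading order, only the summand $\alpha_{3}z_{n}$ in the first component of the transition $f^{N_{1}}_{\mu,\nu}$, so that the discrepancy between the two trajectories at the stage near $\tilde X=(1,0,0)$ equals $(-\alpha_{3}\zeta^{n},0,0)$ up to errors $O(\zeta^{n}\varepsilon_{n})$, where $\varepsilon_{n}\to0$ bounds the diameter of, and the distance from $X$ to, the two curves. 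Since the first coordinate near $\tilde X$ spans the strong stable direction of $P$, the $m$ iterations inside $U_{P}$ damp this discrepancy to $O(\tilde\lambda^{m}\zeta^{n})$ near $Y$; the fixed transition $f^{N_{2}}_{\mu,\nu}$ keeps it of the same order (here the degeneracy conditions in \eqref{tech-assumption2}, in particular $c_{3}=0$, are used so that the $y$- and $z$-parts of the discrepancy do not feed into the components about to be strongly amplified); and $\Phi_{m,n}^{-1}$, which rescales by $\sigma^{n}\tilde\sigma^{m}$ in two coordinates and by $\sigma^{2n}\tilde\sigma^{2m}$ in the third, turns it into discrepancies of size $O(\sigma^{n}\tilde\sigma^{m}\tilde\lambda^{m}\zeta^{n})$ and $O(\sigma^{2n}\tilde\sigma^{2m}\tilde\lambda^{m}\zeta^{n})$. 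Using $|m-nk-\tilde k|<1$ from Lemma~\ref{lem.Neutrality} and $\tilde{\zeta}^{k}=\lambda^{-1}$, the inequalities \eqref{eigenvalue condition 2} give $(\sigma\zeta)(\tilde\sigma\tilde\lambda)^{k}<\zeta<1$ and $(\sigma^{2}\zeta)(\tilde\sigma^{2}\tilde\lambda)^{k}<\lambda/\zeta<1$, so both discrepancies tend to $0$. The higher-order contributions $\tilde H_{i}$, $H_{i}$ are, for $\ell$, dominated by those already controlled in Section~\ref{ss.Proof of Theorem 1.1} (every input coordinate is unchanged or smaller in modulus), and the analogous estimates on first derivatives --- where $r\ge2$ enters --- give the $C^{1}$ statement. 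Combined with the previous paragraph, this proves the proposition.

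I expect the main obstacle to be purely computational: checking that \emph{each} of the many exponentially weighted terms produced by re-running the return-map computation --- in particular the cross terms from the quadratic part of $f^{N_{2}}$ and the $\sigma^{2n}\tilde\sigma^{2m}$-amplification of the central coordinate --- not only stays bounded but tends to zero, each check being a short rearrangement of \eqref{eigenvalue condition 2} (using $\tilde{\zeta}^{k}=\lambda^{-1}$ and $m\sim nk$) that frequently needs both available upper bounds $\sigma\tilde\sigma^{k}<\lambda/\zeta^{2}$ and $\sigma\tilde\sigma^{k}<\tilde\lambda^{-k}$. The structural fact that the $\zeta^{n}$-discrepancy is, thanks to \eqref{tech-assumption1a}, confined after the first transition to the strong stable direction of $P$ --- hence damped by $\tilde\lambda^{m}$ --- is what makes all these balances close; without that degeneracy the $\sigma^{2n}\tilde\sigma^{2m}$-amplification of the central coordinate would be fatal.
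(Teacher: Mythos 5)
Your proposal is correct and follows essentially the same route as the paper's proof: write out the $\Phi_{m,n}$-coordinate expressions for both curves via the return-map calculation of Section~\ref{ss.Proof of Theorem 1.1} and compare term by term; the dominant discrepancy is the $(0,0,\zeta^n)$ offset at the stage near $X$, which after passing through $f^{N_1}$, $f^m|_{U_P}$, $f^{N_2}$ and $\Psi_{m,n}^{-1}$ yields a $C^1$ distance of order $\tilde{\lambda}^m\zeta^n\tilde{\sigma}^{2m}\sigma^{2n}\to 0$ under \eqref{eigenvalue condition 2}. The only difference is presentational: you isolate the structural mechanism (the $\zeta^n$ offset is routed, because of \eqref{tech-assumption1a}, into the strong-stable direction of $P$ and hence damped by $\tilde{\lambda}^m$ before being reamplified), whereas the paper simply tabulates both expressions and reads off that the extra terms $-\tilde{\lambda}^m\zeta^n\tilde{\sigma}^m\sigma^n a_1\alpha_3$, $-\tilde{\lambda}^m\zeta^n\tilde{\sigma}^{2m}\sigma^{2n} b_1\alpha_3$, $-\tilde{\lambda}^m\zeta^n\tilde{\sigma}^m\sigma^n c_1\alpha_3$ tend to zero; your explicit rearrangements $(\sigma\zeta)(\tilde\sigma\tilde\lambda)^k<\zeta$ and $\sigma^2\zeta\,\tilde\sigma^{2k}\tilde\lambda^k<\lambda/\zeta$ are indeed the correct ways to see this from \eqref{eigenvalue condition 2}.
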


By Remark~\ref{rem.bleint}, 
$g_{N_2+m+N_1+n}(\hat{\ell})$ has non-empty intersection 
with the stable manifold of the blender $\Lambda_g$. Hence, 
Proposition~\ref{prop.sec5-prop1} implies that in the perturbed system
the unstable manifold $W^u(Q)$ has non-empty intersection with the
superposition region of the blender $\Lambda_g$ as a vertical segment
for sufficiently large $m, n$. This concludes Theorem~\ref{thm.main3}.  

Thus, we only need to prove Proposition~\ref{prop.sec5-prop1}.
Let us start the proof.
\begin{proof}
The proof is obtained by similar calculations 
in Theorem \ref{thm.main1}-(2). 
In the following, we give explicit calculations of image of two segments 
with respect to $\Psi_{m, n}$-coordinate and prove the smallness 
of the $C^1$ distance of these two segments. Since the coordinate 
change $\Theta\circ\tilde{\Theta}$ is bounded (independent of $m$ and $n$),
it implies the $C^1$ smallness of the difference of two segments
in the $\Phi_{m, n}$-coordinate.
\smallskip

$\bullet$ \emph{Calculation of $\hat{\ell}$.}
First, for any $(0,t,0)\in \Phi^{-1}_{m,n}(\hat{\ell})$ where $|t|<4$,
we  put
$$(\hat{x}(t), \hat{y}(t), \hat{z}(t)):=
\Psi_{m,n}^{-1}
\circ g^{N_{2}+m+N_{1}+n}
\circ
\Phi_{m,n}(0,t,0).$$
Note that by Lemma~\ref{l.boxpert}, we know that $\hat{\ell}$ does not 
get any effect by the perturbation. Thus
by the same procedure as in  (\ref{mn-dependent return map1})-(\ref{mn-dependent return map3}),
each entry of $(\hat{x}(t), \hat{y}(t), \hat{z}(t))$ is given as follows:
\begin{align*}
& \hat{x}(t) =
 (\tilde{\lambda }^m \tilde{\sigma }^{-m} a_1 \alpha _2+ a_2 \beta _2)t+\sigma ^n \tilde{\lambda }^m \tilde{\sigma }^m a_1 \tilde{H}_1(\hat{\tilde{\mathbf{x}}}_{n}(t))+\sigma ^n \tilde{\sigma }^{2 m} a_2 \tilde{H}_2(\hat{\tilde{\mathbf{x}}}_{n}(t))\\
 &
 \hspace{20mm}
 +\sigma ^n \tilde{\zeta }^m \tilde{\sigma }^m a_3 \tilde{H}_3(\hat{\tilde{\mathbf{x}}}_{n}(t))+
\sigma ^n \tilde{\sigma }^m H_1(\hat{\mathbf{x}}_{m}(t));
\\
& \hat{y}(t)=
\bar{\mu}
+ \sigma ^n \tilde{\lambda }^m b_1 \alpha _2 t
+ \beta _2^2 b_2 t^2
+\sigma ^{2 n} \tilde{\lambda }^m \tilde{\sigma }^{2 m} b_1 \tilde{H}_1(\hat{\tilde{\mathbf{x}}}_{n}(t))
+2 t \sigma ^n \tilde{\sigma }^{2 m} \beta _2 b_2 \tilde{H}_2(\hat{\tilde{\mathbf{x}}}_{n} (t))\\
& \hspace{10mm}
+t \sigma ^n \tilde{\zeta }^m \tilde{\sigma }^m \beta _2 b_4 \tilde{H}_3(\hat{\tilde{\mathbf{x}}}_{n}(t))
+
\sigma ^{2 n} \tilde{\zeta }^m \tilde{\sigma }^{3 m} b_4 \tilde{H}_2(\hat{\tilde{\mathbf{x}}}_{n}(t)) \tilde{H}_3(\hat{\tilde{\mathbf{x}}}_{n}(t))
\\
&\hspace{15mm}
+\sigma ^{2 n} \tilde{\sigma }^{4 m} b_2 \tilde{H}_2(\hat{\tilde{\mathbf{x}}}_{n}(t))^2
+\sigma ^{2 n} \tilde{\zeta }^{2 m} \tilde{\sigma }^{2 m} b_3 \tilde{H}_3(\hat{\tilde{\mathbf{x}}}_{n}(t))^2
+\sigma ^{2 n} \tilde{\sigma }^{2 m} H_2(\hat{\mathbf{x}}_{m}(t));
\\
& \hat{z}(t) =
(\tilde{\lambda }^m \tilde{\sigma }^{-m} c_1 \alpha _2+ c_2 \beta _2)t
+\sigma ^n \tilde{\lambda }^m \tilde{\sigma }^m c_1 \tilde{H}_1(\hat{\tilde{\mathbf{x}}}_{n}(t))
\\
&\hspace{50mm}
+\sigma ^n \tilde{\sigma }^{2 m} c_2 \tilde{H}_2(\hat{\tilde{\mathbf{x}}}_{n}(t))+
\sigma ^n \tilde{\sigma }^m H_3(\hat{\mathbf{x}}_{m} (t)).
\end{align*}
Here,
$\hat{\tilde{\mathbf{x}}}_{n}(t)$ and $\hat{\mathbf{x}}_{m}(t)$ in the higher order terms are given as
$$
\hat{\tilde{\mathbf{x}}}_{n}(t)=g^{n}\circ\Phi_{m,n}(0,t,0)-(0,1,0),\
 \hat{\mathbf{x}}_{m}(t)=g^{m+N_{1}+n}\circ\Phi_{m,n}(0,t,0)-(0,1,1).
$$

$\bullet$ \emph{Calculation of $\ell$.}
Next, for any point
$(\lambda^{n}, 1+\sigma^{-n}\tilde{\sigma}^{-2m} t,0)\in \ell$ where
$|t|<4\beta_{2}^{2}b_{2}$,  let us calculate its image under $\Psi_{m,n}^{-1}
\circ g^{N_{2}+m+N_{1}}$.
First, if $m, n$ are sufficiently large, then $\ell$ is contained in the 
domain of the definition of the transition from $U_X$ to $U_Y$.
Thus $g^{N_1}(\ell) \subset U_P$.
Then, we calculate 
$g^{m+N_1}(\ell) := ({\tt x}(t), {\tt y}(t)+1, {\tt z}(t)+1)$. 
They are given as follows:  
\begin{align*}
& {\tt x}(t) =  \tilde{\lambda}^m 
+ t \sigma^{-n} \tilde{\sigma}^{-2m}\alpha_2 - \zeta^n\tilde{\lambda}^m\alpha_3
+ \tilde{\lambda}^m\tilde{H}_1(\tilde{\mathbf{x}}_{n}(t)),  \\
& {\tt y}(t) = t \sigma^{-n} \tilde{\sigma}^{-m}\beta_2 + 
\tilde{\sigma}^m\tilde{H}_2(\tilde{\mathbf{x}}_{n}(t)) ,\\
& {\tt z}(t) =  \tilde{\zeta}^m\tilde{H}_3(\tilde{\mathbf{x}}_{n}(t)).
\end{align*}
By a similar argument to obtain 
(\ref{Landau-notation3}), we can check that these three numbers converge to zero 
as $m, n$ tends to $+\infty$. Hence these points are in the domain 
of transition map from $U_P$ to $U_Q$, see also Remark~\ref{rem.finalcomment}
after the end of this proof. 

Finally, we calculate
$$
(x(t), y(t), z(t)):=
\Psi_{m,n}^{-1}
\circ g^{N_{2}+m+N_{1}} (\lambda^{n}, 1+\sigma^{-n}\tilde{\sigma}^{-2m} t,0).$$
By the same computation as the proof of Theorem~\ref{thm.main1}-(2), one has
\begin{align*}
& x(t) =
(\tilde{\lambda }^m \tilde{\sigma }^{-m} a_1 \alpha _2 +a_2 \beta _2) t
+
\sigma ^n \tilde{\lambda }^m \tilde{\sigma }^m a_1 \tilde{H}_1(\tilde{\mathbf{x}}_{n}(t))
+
\sigma ^n \tilde{\sigma }^{2 m} a_2 \tilde{H}_2(\tilde{\mathbf{x}}_{n}(t))
\\
&
\hspace{20mm}
+
\sigma ^n \tilde{\zeta }^m \tilde{\sigma }^m a_3 \tilde{H}_3(\tilde{\mathbf{x}}_{n}(t))
+
\sigma ^n \tilde{\sigma }^m H_1(\tilde{\mathbf{x}}_{m}(t))
-\tilde{\lambda }^m \zeta ^n   \tilde{\sigma }^m \sigma ^n  a_1 \alpha _3;
\\
& y(t) =
\bar{\mu}
+ \sigma ^n \tilde{\lambda }^m b_1 \alpha _2 t
+ \beta _2^2 b_2 t^2+\sigma ^{2 n} \tilde{\lambda }^m \tilde{\sigma }^{2 m} b_1 \tilde{H}_1(\tilde{\mathbf{x}}_{n}(t))
+2 t \sigma ^n \tilde{\sigma }^{2 m} \beta _2 b_2 \tilde{H}_2(\tilde{\mathbf{x}}_{n}(t))\\
& \hspace{20mm}
+t \sigma ^n \tilde{\zeta }^m \tilde{\sigma }^m \beta _2 b_4 \tilde{H}_3(\tilde{\mathbf{x}}_{n}(t))
+
\sigma ^{2 n} \tilde{\zeta }^m \tilde{\sigma }^{3 m} b_4 \tilde{H}_2(\tilde{\mathbf{x}}_{n}(t)) \tilde{H}_3(\tilde{\mathbf{x}}_{n}(t))
\\
&\hspace{25mm}
+\sigma ^{2 n} \tilde{\sigma }^{4 m} b_2 \tilde{H}_2(\tilde{\mathbf{x}}_{n}(t))^2
+
\sigma ^{2 n} \tilde{\zeta }^{2 m} \tilde{\sigma }^{2 m} b_3 \tilde{H}_3(\tilde{\mathbf{x}}_{n}(t))^2 \\
&\hspace{30mm}
+\sigma ^{2 n} \tilde{\sigma }^{2 m} H_2(\tilde{\mathbf{x}}_{m}(t))
-\tilde{\lambda }^m \zeta ^n \tilde{\sigma }^{2 m} \sigma ^{2 n}   b_1 \alpha _3;
\\
& z(t) =
(\tilde{\lambda }^m \tilde{\sigma }^{-m} c_1 \alpha _2+ c_2 \beta _2) t
+\sigma ^n \tilde{\lambda }^m \tilde{\sigma }^m c_1 \tilde{H}_1(\tilde{\mathbf{x}}_{n}(t))
\\
&\hspace{20mm}
+\sigma ^n \tilde{\sigma }^{2 m} c_2 \tilde{H}_2(\tilde{\mathbf{x}}_{n}(t))+
\sigma ^n \tilde{\sigma }^m H_3(\tilde{\mathbf{x}}_{m}(t))
-\tilde{\lambda }^m \zeta ^n \tilde{\sigma }^m  \sigma ^n   c_1 \alpha _3,
\end{align*}
where
$$
\tilde{\mathbf{x}}_{n}(t)=(\lambda^{n}, \sigma^{-n}\tilde{\sigma}^{-2m} t,0),\
\tilde{\mathbf{x}}_{m}(t)=g^{m+N_{1}}(\lambda^{n}, 1+\sigma^{-n}\tilde{\sigma}^{-2m} t,0)-(0,1,1).
$$

Note that
convergence of
the higher order terms  
in the above $(\tilde{x}(t),\tilde{y}(t),\tilde{z}(t))$ 
and $(x(t),y(t),z(t))$
are already contained in the proof of Theorem \ref{thm.main1}-(2).
Then we have
$$
\left\|
(x(t),y(t),z(t))-(\tilde{x}(t),\tilde{y}(t),\tilde{z}(t))
\right\|_{C^{1}}\leqslant
O (\tilde{\lambda }^m \zeta ^n   \tilde{\sigma }^{2 m} \sigma ^{2 n}).
$$
The conditions  (\ref{eigenvalue condition 1})-(\ref{eigenvalue condition 2})
imply that
$\tilde{\lambda }^m \zeta ^n   \tilde{\sigma }^{2 m} \sigma ^{2 n}$ converges to $0$ as
$m,n\to \infty$.
\end{proof}

\begin{remark}\label{rem.finalcomment}
Let us discuss the importance of the perturbation $\theta_n$.
If we do not give the perturbation, then 
$g^{m+N_1}(\ell)$ is in general not contained 
in the domain of the transition $f^{N_2}$.
More precisely, without $\theta_n$, 
in the $y$-coordinate of $g^{m+N_1}(\ell)$
there remains  a term of the form $\lambda^n\tilde{\zeta}^m\beta_2$ 
which converges to some non-zero constant. 
The perturbation $\theta_n$ is performed so as to annihilate this term. 
\end{remark}

\noindent{{\bf{Acknowledgements.}}}
The authors thank Hiroshi Kokubu and Ming-Chia Li
for financial support which brought us together. 
We also thank
the warm hospitality of Kyoto University, NCTU, and PUC-Rio.
This paper was partially supported by GCOE (Kyoto University), 
JSPS KAKENHI Grant Number 22540226,
CNPq (Research and PDJ grants), Faperj (CNE),  Palis-Balzan project
and the Aihara Project, the FIRST program
from JSPS, initiated by CSTP.


\end{document}